\newtheorem{theorem}{Theorem}[section]
\newtheorem{lemma}[theorem]{Lemma}
\newtheorem{proposition}[theorem]{Proposition}
\theoremstyle{remark}
\newtheorem{remark}[theorem]{Remark}
\newtheorem{definition}[theorem]{Definition}
\theoremstyle{theorem}
\newcommand{\FF}{\mathbb{F}}
\renewcommand{\AA}{\mathbb{A}}
\newcommand{\ZZ}{\mathbb{Z}}
\newcommand{\RR}{\mathbb{R}}
\newcommand{\QQ}{\mathbb{Q}}
\newcommand{\PP}{\mathbb{P}}
\newcommand{\NN}{\mathbb{N}}
\newcommand{\cB}{\mathcal{B}}
\newcommand{\cO}{\mathcal{O}}
\newcommand{\aff}{\mathrm{aff}}
\newcommand{\ns}{\mathrm{ns}}
\title{Dimension growth for affine varieties}
\author{Floris Vermeulen}
\address{Department of Mathematics, KU Leuven, Belgium}
\email{floris.vermeulen@kuleuven.be}
\date{}
\begin{document}

\begin{abstract}
We prove uniform upper bounds on the number of integral points of bounded height on affine varieties. If $X$ is an irreducible affine variety of degree $d\geq 4$ in $\AA^n$ which is not the preimage of a curve under a linear map $\AA^n\to \AA^{n-\dim X+1}$, then we prove that $X$ has at most $O_{d,n,\varepsilon}(B^{\dim X - 1 + \varepsilon})$ integral points up to height $B$. This is a strong analogue of dimension growth for projective varieties, and improves upon a theorem due to Pila, and a theorem due to Browning--Heath-Brown--Salberger. 

Our techniques follow the $p$-adic determinant method, in the spirit of Heath-Brown, but with improvements due to Salberger, Walsh, and Castryck--Cluckers--Dittmann--Nguyen. The main difficulty is to count integral points on lines on an affine surface in $\AA^3$, for which we develop point-counting results for curves in $\PP^1\times \PP^1$.

We also formulate and prove analogous results over global fields, following work by Paredes--Sasyk.  
\end{abstract}

\maketitle

\section{Introduction}

If $x\in \PP^n(\QQ)$ then we may write $x$ as $x = (x_0 : \ldots : x_n)$ where the $x_i$ are in $\ZZ$ and coprime. Recall that the height of $x$ is defined as $H(x) = \max\{|x_0|, \ldots, |x_n|\}$. If $X\subset \PP^n$, and $B$ is a positive integer, then we denote by $N(X, B)$ the number of points on $X\cap \PP^n(\QQ)$ whose height is bounded by $B$. If $X\subset \AA^n$, and $B$ is a positive integer then we denote by $N_\aff(X, B)$ the number of points in $[-B, B]^n\cap \ZZ^n$ which lie on $X$. This work is concerned with uniform bounds on the quantities $N(X,B)$ and $N_\aff(X,B)$, for algebraic varieties $X$.

Since the work of Bombieri--Pila~\cite{Bombieri-Pila}, the determinant method has been a highly influential tool to bound rational points on various geometric objects. This method is especially well-suited to proving uniform upper bounds on rational points on algebraic varieties. For example, Bombieri and Pila prove that if $C\subset \RR^2$ is an irreducible algebraic curve of degree $d$, then the number of points $(x,y)\in [-B, B]^2\cap \ZZ^2$ which lie on $C$ is bounded by $O_{d,\varepsilon}(B^{1/d+\varepsilon})$. We stress that the constant here depends only on $d$ and $\varepsilon$, and not on $C$ itself. By induction on dimension, Pila~\cite{Pila-ast-1995} showed that if $X\subset \RR^n$ is an irreducible algebraic variety of dimension $m$ and of degree $d$, then $N_\aff(X,B)$ is bounded by $O_{d,n,\varepsilon}(B^{m-1+1/d+\varepsilon})$. In the projective setting, the first results were obtained by Heath-Brown~\cite{Heath-Brown-Ann}, who developed a $p$-adic analogue of the determinant method. He proved that if $C\subset \PP^2$ is an irreducible projective curve of degree $d$, then the number of points on $C(\QQ)$ of height at most $B$ is bounded by $O_{d, \varepsilon}(B^{2/d+\varepsilon})$. Moreover, using this $p$-adic determinant method, Heath-Brown was able to prove that an irreducible projective surface of degree $d\geq 2$ has at most $O_{d,\varepsilon}(B^{2+\varepsilon})$ points of height at most $B$. In higher dimensions, this led to the \emph{dimension growth conjecture}, a non-uniform version of which was also conjectured by Serre~\cite{Serre-Mordell}, which states that if $X\subset \PP^n$ is an irreducible projective variety of degree $d\geq 2$, then 
\[
N(X,B)\ll_{d,n,\varepsilon} B^{\dim X + \varepsilon}.
\]
For $d\geq 6$, this conjecture was proven by Browning--Heath-Brown--Salberger~\cite{Brow-Heath-Salb}. Their methods are based on an analogue for affine varieties, which is well-suited to induction on dimension. By developing a global $p$-adic determinant method, Salberger~\cite{Salberger-dgc} was further able to prove the conjecture when $d\geq 4$, and via different methods in a non-uniform way when $d=3$. For $d=2$, dimension growth is also known by work of Heath-Brown~\cite{Heath-Brown-Ann}. Based on an improvement by Walsh~\cite{Walsh} of Heath-Brown's result for projective curves, Castryck--Cluckers--Dittmann--Nguyen~\cite{CCDN-dgc} were able to obtain dimension growth results which are moreover polynomial in $d$, and without $\varepsilon$. The possibility of polynomial dependence on the degree also appears in work by Walkowiak~\cite{Walkowiak} for projective curves, but with logarithmic factors and a worse exponent than in~\cite{CCDN-dgc}.

This article is concerned with a strong version of dimension growth for affine varieties. If $X\subset \AA^n$ is an affine hypersurface of degree $d$, then Pila's bound~\cite{Pila-ast-1995}
\[
N_\aff(X, B)\ll_{d,n, \varepsilon} B^{n-2+1/d+\varepsilon}
\]
is essentially optimal, as the example $x_2=x_1^d$ shows. On the other hand, if the intersection of $X$ with the hyperplane at infinity is geometrically integral, then one may remove the $1/d$ in the exponent, by work of Browning--Heath-Brown--Salberger~\cite{Brow-Heath-Salb}. We weaken this condition significantly, and our main result strengthens Pila's theorem, while generalizing the result by Browning--Heath-Brown--Salberger.

\begin{definition}
Let $X\subset \AA^n$ be an affine variety of dimension $m$. We say that $X$ is \emph{cylindrical over a curve} if there exists a $\QQ$-linear map $\pi: \AA^n\to \AA^{n-m+1}$ such that $\pi(X)$ is a curve.
\end{definition}

If $X$ is a hypersurface in $\AA^n$ defined by a polynomial $f\in \ZZ[x_1, \ldots, x_n]$, then $X$ is cylindrical over a curve if and only if there exist linear forms $\ell_1, \ell_2\in \QQ[x_1, \ldots, x_n]$ and a polynomial $g\in \QQ[y_1, y_2]$ such that $f(x) = g(\ell_1(x), \ell_2(x))$. Geometrically, one may reformulate $X$ being cylindrical over a curve as $X$ being a cone whose cone point is an $(n-2)$-dimensional linear space at infinity.

Our main result is then as follows.

\begin{theorem}[Affine Dimension Growth]\label{thm:main.thm}
Let $X\subset \AA^n$ be an irreducible variety of dimension $m$ and of degree $d$ defined over $\QQ$ which is not cylindrical over a curve. Then for any positive integer $B$
\begin{align*}
N_{\aff}(X,B)\ll_{d,n, \varepsilon} B^{m-1+\varepsilon}, &\quad \text{ if } d\geq 4, \\
N_{\aff}(X,B)\ll_{n, \varepsilon} B^{m-2+\frac{2}{\sqrt{3}}+\varepsilon}, &\quad \text{ if } d = 3.
\end{align*}
\end{theorem}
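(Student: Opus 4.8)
The plan is to run the $p$-adic determinant method, in the form refined by Heath-Brown, Salberger, Walsh, and Castryck--Cluckers--Dittmann--Nguyen, using induction on the dimension $m$. The base case $m=1$ is dimension growth for affine curves, which follows from the projective curve bounds of Heath-Brown and Walsh applied to the projective closure. For the inductive step, one intersects $X$ with a well-chosen family of auxiliary hypersurfaces of low degree over many primes $p$: the points of $[-B,B]^n\cap\ZZ^n$ lying on $X$ split into residue classes mod $p$, and on each class the determinant of a matrix built from monomials evaluated at the points is divisible by a large power of $p$; if it is nonzero this forces the points to lie on an auxiliary hypersurface, and one optimizes $p$ and the degree of the auxiliary hypersurface against $B$ and $d$. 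Combining over several primes as in Salberger's global determinant method yields that, outside a subvariety $Z\subsetneq X$ of controlled degree, the contribution is $O_{d,n,\varepsilon}(B^{m-1+\varepsilon})$.

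The remaining contribution comes from the points lying on $Z$, and from points lying on lines and low-degree curves contained in $X$; this is where the hypothesis that $X$ is not cylindrical over a curve is used. After slicing $X$ by generic hyperplanes to reduce to the case of a surface $S\subset\AA^3$ (the components of the slices are irreducible of the same degree $d$ and are again not cylindrical over a curve, by a Bertini-type argument together with the geometric reformulation of the cylindrical condition as a cone with $(n-2)$-dimensional vertex at infinity), the crux is to bound the number of integral points of height $\le B$ lying on lines contained in $S$. A surface in $\AA^3$ of degree $d$ that is not cylindrical over a curve has at most $O_d(1)$ lines through a general point, but the difficulty is the lines with many integral points: a line with $\gg B^{1-\delta}$ integral points of height $\le B$ is very constrained, and the union of all such lines is itself cut out inside $S$ by the ruling structure. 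One studies this via the curve in $\PP^1\times\PP^1$ parametrizing such lines (equivalently, the Gauss-type map), and applies the point-counting results for curves in $\PP^1\times\PP^1$ developed earlier in the paper to show that these lines contribute only $O_{d,n,\varepsilon}(B^{1+\varepsilon})$ on the surface, hence $O_{d,n,\varepsilon}(B^{m-1+\varepsilon})$ after reversing the slicing.

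The main obstacle, and the technical heart of the argument, is precisely this count of integral points on lines in an affine surface in $\AA^3$: one must show that the hypothesis ``not cylindrical over a curve'' prevents the pathological configuration in which a positive-dimensional family of lines on $S$ each carry $\gg B$ integral points (which would happen for a genuine cylinder). This requires a careful analysis of the geometry of lines on $S$ and their behavior at infinity, reducing to the $\PP^1\times\PP^1$ counting results, and tracking all degree and height parameters uniformly. For $d=3$ the determinant-method optimization yields the weaker exponent $m-2+2/\sqrt{3}+\varepsilon$ because the auxiliary hypersurface degree cannot be pushed high enough relative to $d$, exactly as in the projective case; the line-counting input is the same, so the $d=3$ statement follows by the same scheme with the worse numerology.
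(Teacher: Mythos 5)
Your overall scheme matches the paper's — reduce to surfaces in $\AA^3$, apply the (already known) determinant-method bound to handle points off low-degree curves in the surface, and concentrate the new work on counting integral points on lines via the $\PP^1\times\PP^1$ results — but there are two genuine gaps.

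First, your induction base case $m=1$ is wrong, indeed vacuous. For a curve ($m=1$) the definition of ``cylindrical over a curve'' asks for a $\QQ$-linear map $\AA^n\to\AA^{n-m+1}=\AA^n$ sending $X$ to a curve, and the identity always does this; so no curve satisfies the hypothesis and the theorem says nothing for $m=1$. The actual base case is $m=2$, surfaces in $\AA^3$ (Proposition~\ref{prop:counting.surface}), and that is where the paper's new content lives. The induction for hypersurfaces then starts at $n=3$, and general $X\subset\AA^n$ is reduced to a hypersurface in $\AA^{m+1}$ by a birational linear projection beforehand — a step you omit, and which needs the observation that projections preserve non-cylindricality (Lemma~\ref{lem:project.linear.over.curve}).

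Second, and more seriously, the part you call ``the technical heart'' — bounding integral points on lines on the surface — is left as a sketch of intent rather than an argument, and your proposed mechanism (a ``Gauss-type map'' and a curve in $\PP^1\times\PP^1$ parametrizing lines with many integral points) does not describe what actually works. The paper's Lemma~\ref{lem:lines.on.surface} proceeds quite differently: it decomposes the top-degree part $f_d$ into irreducible factors and sums $B/H(v)$ over rational directions $v$ on each component of $V(f_d)\subset\PP^2$, using Lemma~\ref{lem:fin.many.lines.in.direction} (non-cylindricality gives at most $d^2$ lines per direction). For components of degree $\geq 2$ this is handled by the existing projective-curve bound plus partial summation. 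The hard case is a \emph{linear} factor $g$ of $f_d$: after a unimodular change of coordinates making $g=x$, one forms a bihomogeneous incidence polynomial $G$ relating the base-point coordinate $a_1\in\ZZ$ to the direction $(v_2:v_3)\in\PP^1$, and the needed estimate is $\sum 1/H(t)\ll B^\varepsilon$ (Lemma~\ref{lem:count.inverse.height}), which is proved using the new $\AA^1\times\PP^1$ Theorem~\ref{thm:curves.in.A1P1} together with a Walkowiak-style substitution to handle the height of $G$, plus a separate elementary divisor argument when $\deg_x G=1$ (Lemma~\ref{lem:rational.function.integer}). None of these ingredients appears in your proposal, and they are exactly where the non-cylindricality hypothesis and the $\PP^1\times\PP^1$ machinery are actually used. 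As written, your outline asserts the conclusion of that lemma without supplying the argument.

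A smaller inaccuracy: you describe re-running the full global determinant method on the surface to produce an auxiliary hypersurface, but the paper simply invokes Salberger's existing Theorem~7.2 for this step; the new work is entirely in the line count.
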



For $d\geq 4$ this bound is essentially optimal. Indeed, as soon as $X$ contains a linear space of maximal dimension defined over $\QQ$, we have that $N(X,B)\gg B^{m-1}$. However, it seems reasonable to expect that one may remove the factor $B^\varepsilon$, at the cost of a logarithmic factor. Also, the dependence on $d$ in these bounds is likely polynomial, as in~\cite{CCDN-dgc}. In fact, our methods almost give such improvements, and there is only one very specific case where a factor $B^\varepsilon$ and the non-explicit dependence on $d$ appear. In~\cite{CDHNV}, we prove a version of this result which is polynomial in $d$, but with some technical assumptions on $f_d$. Let us mention that Salberger~\cite{Salb.upcoming} has recently also obtained a weaker version of this theorem, namely with the extra condition that the intersection of $X$ with the hyperplane at infinity has an absolutely irreducible component of degree at least $2$. Salberger also studies the case $d=2$ via different methods based on work by Heath-Brown~\cite{Heath-Brown-cubicsurf}.

Of course, if $X$ is cylindrical over a curve, then bounding $N_\aff(X, B)$ reduces to counting integral points on a curve. Indeed, if $\pi: \AA^n\to \AA^{n-m+1}$ is a linear map for which $\pi(X)$ is a curve, then
\[
N_{\aff}(X,B) \ll B^{m-1} N_{\aff}(\pi(X), B).
\]

In positive characteristic, analogues of the Bombieri--Pila bound were obtained by Sedunova~\cite{Sedunova}, and Cluckers--Forey--Loeser~\cite{CFL} in large characteristic. In higher dimensions, the first results were obtained by the author~\cite{Vermeulen:p}, who proved dimension growth for hypersurfaces over $\FF_q(t)$ of large degree. These methods were subsequently generalized to all global fields and all degree by Paredes--Sasyk~\cite{Pared-Sas}. The results of this paper also naturally adapt to the setting of global fields, in particular also in positive characteristic. In Section~\ref{sec:global.fields} we explain how to generalize Theorem~\ref{thm:main.thm} to any global field, see Theorem~\ref{thm:main.thm.global}.

\subsection*{Proof strategy} Let us briefly discuss the proof strategy of Theorem~\ref{thm:main.thm}. One first reduces to hypersurfaces via a projection argument, and then by cutting with hyperplanes to the case of an affine surface $X$ in $\AA^3$. The most difficult case is now if $X$ is ruled, that is, $X$ is the union of the lines contained in it. Indeed, if $X$ contains only finitely many lines, then Theorem~\ref{thm:main.thm} follows already from the work of Salberger~\cite{Salberger-dgc}. Denote by $X_\infty\subset \PP^2$ the intersection of $X$ with the plane $\PP^2$ at infinity of $\AA^3$. If $X_\infty$ does not contain any linear irreducible components, then an adaptation of work by Browning--Heath-Brown--Salberger~\cite{Brow-Heath-Salb} gives the result also in this case, see e.g.\ the recent~\cite{Salb.upcoming}. Therefore, the most difficult case is when $X\subset \AA^3$ is an irreducible ruled surface for which $X_\infty$ contains a (rational) line. To obtain a good count on integral points in this case, we prove a uniform upper bound for counting integral points on curves in $\AA^1\times \PP^1$.

\begin{theorem}[{{Theorem~\ref{thm:curves.in.A1P1}}}]
Let $f(x,t)\in \ZZ[x,t]$ be irreducible with $\deg_x f = d_1, \deg_t f = d_2$. Then for any positive integers $B_1, B_2\geq 2$, the number of $(x,t)\in \ZZ\times \QQ$ with $|x|\leq B_1, H(t)\leq B_2$ and $f(x,t) = 0$ is bounded by
\[
\ll (d_1d_2)^{\frac{9}{2}} B_1^{\frac{1}{2d_2}} B_2^{\frac{1}{d_1}}\log (B_1 B_2).
\]
\end{theorem}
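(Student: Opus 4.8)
The plan is to use the $p$-adic determinant method, following Heath-Brown, Salberger, Walsh, and Castryck--Cluckers--Dittmann--Nguyen, adapted to the mixed affine-projective setting $\AA^1 \times \PP^1$. First I would reduce to the case where $f$ is \emph{absolutely} irreducible. If $f$ is geometrically reducible, then its geometrically irreducible components are permuted by the absolute Galois group, and only finitely many such components are defined over $\QQ$; each such component has lower degree, and any integral point with $|x| \le B_1$, $H(t) \le B_2$ lying on $f = 0$ must lie on a Galois-conjugate of some component. The points on a geometrically irreducible curve defined over a number field, but not over $\QQ$, can be bounded by resultant/intersection arguments (the curve and its conjugate meet in $O(d_1 d_2)$ points); a separate, easier count handles this case, so the bulk of the work is for $f$ absolutely irreducible.

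Next, for $f$ absolutely irreducible, I would fix an auxiliary prime $p$ of good reduction (chosen of size a suitable power of $B_1^{1/(2d_2)} B_2^{1/d_1}$, using effective Chebotarev / elementary bounds to ensure such a prime exists with the required multiplicativity properties) and partition the relevant integral points according to the residue point they reduce to on the reduction $\bar f$ over $\FF_p$. The core estimate is that the points reducing to a fixed smooth $\FF_p$-point of $\bar f$ all lie on a single auxiliary curve: one constructs, via a Vandermonde-type determinant built from monomials $x^i t_0^j t_1^{D-j}$ of controlled bidegree, an auxiliary polynomial $g(x, t)$ of bidegree roughly $(e_1, e_2)$ with $g \not\equiv 0 \bmod f$, vanishing at all the relevant points. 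Here the bidegree must be balanced: one wants $e_1 / e_2$ comparable to $B_2^{1/d_1} / B_1^{1/(2d_2)}$ so that the determinant of coordinates, estimated archimedean-wise by Hadamard's inequality against the box $|x| \le B_1$, $H(t) \le B_2$, and $p$-adically by the high power of $p$ dividing each minor (each point contributes a factor $p$ from reduction to the same residue class, Taylor-expanded to higher order as in Heath-Brown), is forced to vanish. The number of monomials of bidegree $\le (e_1, e_2)$ restricted modulo $f$ is about $d_2 e_1 + d_1 e_2$ (a Riemann--Roch / Newton-polygon count for curves in $\PP^1 \times \PP^1$), and optimizing the resulting inequality $(\text{archimedean size}) < p^{(\text{number of points per class}) \cdot (\text{something})}$ yields that each residue class carries $\ll d_1 d_2$ points on an auxiliary curve of bounded bidegree.

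Then I would apply B\'ezout in $\PP^1 \times \PP^1$: the intersection $f = g = 0$ has at most $\deg_x f \cdot \deg_t g + \deg_t f \cdot \deg_x g \ll d_1 d_2 (e_1 + e_2)$ points unless $f \mid g$, which we have excluded. Summing over residue classes — there are $O(p \cdot \max(d_1,d_2))$ of them by Lang--Weil / elementary bounds for $\bar f$ over $\FF_p$ — and bookkeeping the non-smooth points of $\bar f$ separately (there are $O((d_1 d_2)^2)$ such, and the points above them are handled by a cruder bound, or by varying the prime), gives the main term of the shape $p \cdot (d_1 d_2)^{O(1)}$. Substituting $p \asymp B_1^{1/(2d_2)} B_2^{1/d_1}$ and tracking the degree dependence carefully through the determinant estimates, the Vandermonde bounds, and the number of auxiliary primes one may need to try, produces the stated bound $\ll (d_1 d_2)^{9/2} B_1^{1/(2d_2)} B_2^{1/d_1} \log(B_1 B_2)$, with the logarithm coming from the need to sum a dyadic range of boxes (or from the prime-producing step).

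The main obstacle I expect is the \emph{asymmetry} between the two variables: $x$ ranges over integers in a genuine interval while $t$ ranges over rationals of bounded height, so one is really counting on a surface-like object $\AA^1 \times \PP^1$ rather than on $\PP^2$, and the determinant method must be run with monomials of \emph{mixed} bidegree and with an archimedean volume estimate that treats the affine coordinate $x$ (size $\le B_1$, contributing linearly) differently from the projective coordinate $t = (t_0 : t_1)$ (height $\le B_2$, where $x^i t_0^a t_1^b$ has size $\ll B_1^i B_2^{a+b}$). Getting the exponents $\tfrac{1}{2d_2}$ and $\tfrac{1}{d_1}$ to come out correctly — note the factor $2$ on $d_2$, reflecting that $t$ contributes two coordinates — requires choosing the bidegree of the auxiliary polynomial so that the archimedean size of the coordinate matrix is balanced against the $p$-adic gain, and then verifying that the resulting auxiliary curve genuinely has bounded bidegree so that B\'ezout in $\PP^1 \times \PP^1$ is applicable. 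A secondary technical point is ensuring the degree dependence stays polynomial: the Vandermonde determinant of monomial coordinates must be shown nonzero via an argument that is uniform in $d_1, d_2$ (e.g.\ choosing the monomials to form a ``staircase'' basis for the coordinate ring of the curve in $\PP^1 \times \PP^1$), and the power of $p$ needed must be only polynomially larger than $d_1 d_2$.
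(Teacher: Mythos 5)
Your proposal takes a genuinely different route from the paper, and I believe it contains a gap at the key step.

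The paper does \emph{not} re-run the determinant method in the mixed setting $\AA^1\times\PP^1$. Instead, it proves Theorem~\ref{thm:curves.in.A1P1} as a short corollary of Theorem~\ref{thm:curves.in.P1P1} (the $\PP^1\times\PP^1$ bound) by an Ellenberg--Venkatesh-style \emph{height inflation} trick: one homogenizes $f$ to $F_H(X,Y,U,V)=\sum_i f_i(U,V)X^iH^iY^{d_1-i}$, so that an integral solution $(x,t)$ of $f=0$ with $|x|\le B_1$ maps to the projective solution $((x:H),t)$ of $F_H=0$ still of biheight $\le(B_1,B_2)$, but with $\|F_H\|\geq H^{d_1}\|f_{d_1}\|$. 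Choosing $H=B'$ a prime in $[B_1/2,B_1]$ (handling separately the degenerate case when $f_0$ is divisible by every such prime) and invoking Theorem~\ref{thm:curves.in.P1P1}, the factor $\|F_{B'}\|^{-1/(2|d|)}\ll B_1^{-d_1/(2|d|)}=B_1^{-1/(2d_2)}$ converts the baseline $B_1^{1/d_2}$ into $B_1^{1/(2d_2)}$. The entire improvement from $1/d_2$ to $1/(2d_2)$ thus comes from Walsh's $\|f\|$-dependence in Theorem~\ref{thm:curves.in.P1P1}, artificially triggered by inflating the polynomial's coefficients; nothing in the determinant method itself is rerun.

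Your proposal instead attempts a direct determinant argument with mixed monomials $x^it_0^at_1^b$ and a ``balanced bidegree'' choice, asserting that the integrality of $x$ gives an archimedean savings because $x^it_0^at_1^b\ll B_1^iB_2^{a+b}$ with $i$ possibly small. The observation about monomial sizes is correct, but it does not propagate through the Bombieri--Vaaler step that underlies the Walsh/CCDN archimedean bound. The relevant quantity $\sqrt{|\det(AA^T)|}$ is a sum over all $s\times s$ minors of the coordinate matrix $A$, and it is dominated by the minors involving monomials with $i$ near the top bidegree $e_1$; those have values $\asymp B_1^{e_1}B_2^{e_2}$, exactly as for a projective first coordinate, so the estimate collapses to $B_1^{1/d_2}$ rather than $B_1^{1/(2d_2)}$. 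You gesture at ``optimizing the resulting inequality'' and a factor $2$ ``reflecting that $t$ contributes two coordinates,'' but no concrete mechanism in your outline forces the exponent down by that factor. In fact there is no obvious choice of the auxiliary prime $p$ and the bidegree $(e_1,e_2)$ in a $\PP^1\times\PP^1$-style argument that, absent an $\|f\|$-dependent input, would yield the stated exponents. Restoring the claim requires either (a) a Bombieri--Pila-type real-analytic archimedean bound (Taylor expansion of $t$ as a function of $x$), which is a very different machine from what you propose, or (b) the height-inflation device the paper actually uses. As written, the determinant step is the missing idea in your proof.
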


We prove this result in Section~\ref{sec:curves.in.P1P1} via the determinant method, in the spirit of Heath-Brown~\cite{Heath-Brown-Ann}, but with the improvements from Walsh~\cite{Walsh} and Castryck--Cluckers--Dittmann--Nguyen~\cite{CCDN-dgc}. In Section~\ref{sec:lines.on.surfaces} we then prove Theorem~\ref{thm:main.thm} for surfaces in $\AA^3$. Finally, we finish the proof in Section~\ref{sec:dim.growth} via the aforementioned projection and slicing arguments.



\subsection*{Notation} We use the notation $a = O_c(b)$ or $a\ll_c b$ to mean that there exists a constant $\alpha > 0$ depending on $c$ such that $|a|\leq \alpha |b|$. For functions $f,g: \NN\to \RR$ we use the notation $f = o(g)$ to mean that $\lim f/g = 0$.

If $f$ is a polynomial, then we denote by $V(f)$ the variety defined by the vanishing of $f$. Depending on context, this might be an affine or a projective variety.

\subsection*{Acknowledgements} The author thanks Raf Cluckers, Yotam Hendel, Kien Nguyen, and Per Salberger for many discussions around the topics of this paper. The author is supported by F.W.O.\ Flanders (Belgium) with grant number 11F1921N.

\section{Preliminaries}

In this section we recall some results about auxiliary polynomials and on bounding rational and integral points on curves. 

For many results about irreducible polynomials or varieties, it is typically enough to work with the absolutely irreducible case. 

\begin{proposition}\label{prop:abs.irre.poly}
Let $X$ be a projective or affine variety defined over $\QQ$, and of degree $d$. If $X$ is irreducible but not absolutely irreducible, then there exists a codimension $1$ subvariety $Y\subset X$ defined over $\QQ$ of degree at most $d^2$ such that $X(\QQ)\subset Y(\QQ)$.
\end{proposition}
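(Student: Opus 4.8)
The plan is to base change to $\overline{\QQ}$ and exploit the Galois action on the absolutely irreducible components. Write $X_{\overline{\QQ}} = X_1 \cup \cdots \cup X_r$ for the decomposition into absolutely irreducible components. Since $X$ is irreducible over $\QQ$, the absolute Galois group $G = \Gal(\overline{\QQ}/\QQ)$ permutes $\{X_1, \dots, X_r\}$ transitively; by hypothesis $r \geq 2$. In particular the $X_i$ are pairwise $G$-conjugate, so they all have the same dimension $m = \dim X$ and the same degree $d/r$, degree being Galois-invariant and additive over absolutely irreducible components of a reduced variety.

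The key point is an observation about rational points. Let $P \in X(\QQ)$. Then $P$ lies on some component $X_i$. Since $P$ is fixed by $G$ and $G$ acts transitively on the set of components, applying elements of $G$ shows $P \in X_j$ for every $j$. Hence $P \in X_i \cap X_j$ for all $i \neq j$, and therefore
\[
X(\QQ) \ \subseteq \ \bigcup_{1 \leq i < j \leq r} (X_i \cap X_j) \ =: \ Y .
\]

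It remains to verify that $Y$ has the required properties. First, $Y$ is $G$-stable, since $G$ permutes the finite collection $\{X_i \cap X_j : i < j\}$; hence $Y$ is defined over $\QQ$, and $Y_{\overline{\QQ}} \subseteq X_{\overline{\QQ}}$ gives $Y \subseteq X$. Next, for $i \neq j$ the varieties $X_i, X_j$ are distinct irreducible components, so $X_i \cap X_j \subsetneq X_i$; as $X_i$ is irreducible of dimension $m$, each intersection $X_i \cap X_j$ has dimension at most $m-1$, and thus $\dim Y \leq m - 1$, i.e.\ $Y$ has codimension at least $1$ in $X$. Finally, for the degree bound, pass to projective closures (this does not change degrees and only enlarges the intersections) and apply the refined Bézout inequality to get $\deg(X_i \cap X_j) \leq \deg X_i \cdot \deg X_j = (d/r)^2$. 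Summing over the $\binom{r}{2}$ pairs,
\[
\deg Y \ \leq \ \binom{r}{2} \left( \frac{d}{r} \right)^{\!2} \ = \ \frac{(r-1)d^2}{2r} \ \leq \ \frac{d^2}{2} \ \leq \ d^2 .
\]
Hence $Y \subseteq X$ is defined over $\QQ$, has codimension $\geq 1$, degree at most $d^2$, and satisfies $X(\QQ) \subseteq Y(\QQ)$.

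I expect the only steps requiring genuine care to be the standard facts used in the first paragraph — that $G$ acts transitively on the absolutely irreducible components (equivalently, that irreducibility of $I(X)$ over $\QQ$ forces the minimal primes over $I(X)\overline{\QQ}[x]$ to form a single Galois orbit), and that Galois-conjugate components share dimension and degree — together with the Bézout bookkeeping in the last step, where in the affine case one must remember to pass to projective closures before invoking Bézout. All of this is routine, so the proposition should follow cleanly.
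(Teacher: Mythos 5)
Your proof is correct and is essentially the argument the paper defers to in Walsh (Section 4, first paragraph): the Galois group permutes the geometrically irreducible components transitively, so a rational point is forced to lie in every component and hence in the pairwise intersections, whose total degree is controlled by refined B\'ezout. The paper itself gives no details beyond the citation; your write-up fills those in cleanly and generalizes the plane-curve argument to arbitrary dimension, with the resulting bound $\frac{(r-1)d^2}{2r}\le d^2/2$ even slightly sharper than the $d^2$ asserted.
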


\begin{proof}
This is established in the first paragraph of~\cite[Sec.\,4]{Walsh}.
\end{proof}

If $f$ is a polynomial over $\ZZ$ then we denote by $||f||$ the maximum of the absolute values of the coefficients of $f$. The following result by Heath-Brown allows us to replace factors $\log ||f||$ and $||f||^\varepsilon$ by factors of the form $O_{d,n}(\log B)$ and $O_{d,n,\varepsilon}(B^\varepsilon)$.

\begin{proposition}\label{prop:replace.height}
Let $f\in \ZZ[x_1, \ldots, x_n]$ be primitive, and absolutely irreducible of degree $d$, and let $B$ be a positive integer. Then either $||f||$ is bounded by $O_n\left(B^{d\binom{d+n}{n}}\right)$, or there exists a polynomial $g\in \ZZ[x_1, \ldots, x_n]$ of degree at most $d$, not divisible by $f$ and vanishing at all integral points of $V(f)$ of height at most $B$.
\end{proposition}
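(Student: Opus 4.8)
The plan is to run the $p$-adic determinant method of Heath-Brown at a single well-chosen prime and extract an auxiliary polynomial $g$ of controlled degree vanishing on all integral points of $V(f)$ of height at most $B$, where the only obstruction to producing such a $g$ is that $\|f\|$ itself is forced to be small. First I would fix a prime $p$ (of size depending only on $n$ and $d$, say the smallest prime exceeding some explicit function of $n$; one can also take $p$ comparable to a small power of $\log B$, but a bounded prime suffices here) and consider, for a suitable integer $s$, the $\binom{s+n}{n}$ monomials of degree at most $s$ in $x_1,\dots,x_n$. Evaluating these monomials at a list of integral points $P_1,\dots,P_N$ on $V(f)$ of height at most $B$ yields an $N \times \binom{s+n}{n}$ matrix $M$ over $\ZZ$. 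If $N$ is large (larger than the dimension of the space of degree-$\le s$ polynomials modulo $f$, which is the Hilbert function value $h_{V(f)}(s) = O_{d,n}(s^{d-1})$ since $V(f)$ has dimension $n-1$ and degree $d$), then by a maximal-minor/linear-algebra argument there is a nonzero integer combination of the columns, i.e.\ a polynomial $g$ of degree $\le s$ not divisible by $f$, vanishing at all the chosen points — provided one can certify that a suitable maximal minor of $M$ is forced to vanish.

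The core of the argument is the $p$-adic Archimedean determinant bound: one estimates the absolute value of an $r\times r$ minor $\Delta$ of $M$ (with $r = h_{V(f)}(s)+1$, say) in two ways. On the one hand, $|\Delta|$ is crudely at most $r!\,(\max_i \|P_i\|)^{rs} \le r!\,B^{rs}$ by expanding the determinant and bounding each monomial entry by $B^s$. On the other hand, Heath-Brown's key lemma (the $p$-adic local estimate, obtained by Taylor-expanding along the $p$-adic fibres of $V(f)$) shows that $\Delta$ is divisible by a large power $p^{e}$ of $p$, with $e$ growing like $r^{1+1/(n-1)}$ up to constants depending on $d,n$; more precisely one partitions the $N$ points into $p$-adic congruence classes, within each class parametrizes $V(f)$ locally, and Taylor-expands the monomial functions to gain divisibility. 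Comparing, if $p^{e} > r!\,B^{rs}$ then every such minor must be zero, hence $\mathrm{rank}(M) < r$, hence the desired $g$ exists. Taking logarithms, the condition becomes roughly $e\log p \gtrsim rs\log B$, i.e.\ (after dividing by $r$) $r^{1/(n-1)} \gtrsim s\log B$ up to $O_{d,n}(1)$ factors; one then chooses $s = O_{d,n}(1)$ large enough (the bound $\deg g \le d$ in the statement forces $s$ to be comparable to $d$, and indeed one checks $s = d$ works) so that $r \asymp s^{d-1}$ and the inequality reads $(\text{const}_{d,n})\cdot s^{(d-1)/(n-1)} \ge s\log B$, which holds once $B$ is below a threshold of the form $B \le \exp(\mathrm{const}_{d,n})$ — but wait, this only gives a constant threshold, not the polynomial-in-$B$ bound $\|f\|\ll B^{d\binom{d+n}{n}}$ claimed.

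To fix the exponent in the alternative I would instead argue contrapositively: \emph{assume no such $g$ of degree $\le d$ exists}, meaning the evaluation map from degree-$\le d$ polynomials to functions on the integral points of $V(f)$ of height $\le B$ is injective modulo $f$. Then in particular $f$ itself can be recovered (up to scalar) as essentially the unique-up-to-$f$ relation; more usefully, since $V(f)$ has $\ge \binom{d+n}{n} - 1$ "independent" integral points of height $\le B$ (this independence is exactly the non-existence of $g$), one can write down a linear system whose solution is the coefficient vector of $f$: the coefficients of $f$ satisfy the $\binom{d+n}{n}$ linear equations $f(P_i)=0$ for an appropriate subset of $\binom{d+n}{n}-1$ of the points together with a normalization, the matrix having integer entries of size $\le B^d$, so by Cramer's rule the coefficients of $f$ are (after clearing the content to keep $f$ primitive) bounded in absolute value by a determinant of a $\binom{d+n}{n}\times\binom{d+n}{n}$ integer matrix with entries $\le B^d$, hence by $\binom{d+n}{n}!\,B^{d\binom{d+n}{n}} = O_n\!\left(B^{d\binom{d+n}{n}}\right)$. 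The hard part is checking that non-existence of $g$ really does supply enough points in sufficiently "general position" to pin down $f$ by such a linear system — this is where one uses that $V(f)$ being absolutely irreducible of degree $d$ means its Hilbert function in degree $d$ is the full $\binom{d+n}{n}$ minus the $1$-dimensional space spanned by $f$, so the points either impose independent conditions (giving the Cramer bound) or fail to, in which case a nonzero degree-$\le d$ form other than a multiple of $f$ vanishes on all of them, which is precisely the polynomial $g$ we wanted. Assembling the two horns of this dichotomy gives the proposition; I expect the bookkeeping around primitivity (controlling the content when clearing denominators in Cramer's rule) and the precise general-position claim to be the only delicate points, both of which are handled in Heath-Brown's original treatment and can be cited or reproduced in a few lines.
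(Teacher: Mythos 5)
Your final, contrapositive argument --- the linear-algebra dichotomy in which either the integral points of height $\le B$ fail to impose $\binom{d+n}{n}-1$ independent conditions on degree-$\le d$ polynomials (yielding the auxiliary $g$), or they do, in which case the primitive coefficient vector of $f$ spans the one-dimensional kernel and is recovered from $\bigl(\binom{d+n}{n}-1\bigr)\times\bigl(\binom{d+n}{n}-1\bigr)$ minors of an integer matrix with entries $\le B^d$ --- is correct and is exactly Heath-Brown's proof of his Theorem 4 in \cite{Heath-Brown-Ann}, which is all the paper cites for this proposition. The $p$-adic determinant-method route you sketch first is, as you yourself observe, a dead end here (it forces a fixed threshold on $B$ rather than the stated polynomial one), and you are right to discard it.
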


\begin{proof}
This is~\cite[Thm.\,4]{Heath-Brown-Ann}.
\end{proof}

We recall the following trivial bound on counting integral points on affine varieties, which is also called the Schwartz--Zippel bound.

\begin{proposition}\label{prop:schwarz.zippel}
Let $X\subset \AA^n$ be a (possibly reducible) affine variety of degree $d$ and dimension $m$. Then
\[
N_\aff (X, B)\leq d(2B+1)^m.
\]
\end{proposition}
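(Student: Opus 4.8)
The plan is to prove the bound by induction on the dimension $m$, slicing by coordinate hyperplanes. First observe that it suffices to treat the case where $X$ is irreducible: if $X = \bigcup_i X_i$ is the decomposition into irreducible components, then $N_\aff(X,B) \leq \sum_i N_\aff(X_i, B)$, while $\dim X_i \leq m$ and $\sum_i \deg X_i = \deg X = d$, so a bound of the shape $N_\aff(X_i,B) \leq \deg(X_i)(2B+1)^{\dim X_i}$ for each component sums to the desired inequality.

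For the base case $m = 0$, an irreducible zero-dimensional variety over $\QQ$ of degree $d$ has at most $d$ rational points, and every integral point is rational, so $N_\aff(X,B) \leq d = d(2B+1)^0$.

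For the inductive step, let $X$ be irreducible of dimension $m \geq 1$ and degree $d$. Since $X$ is irreducible and positive-dimensional, some coordinate function, say $x_n$, is non-constant on $X$. For each integer $c$ with $|c|\leq B$ consider the slice $X_c := X \cap \{x_n = c\}$. As the hyperplane $\{x_n = c\}$ does not contain the irreducible variety $X$, each component of $X_c$ has dimension at most $m-1$, and by Bézout's theorem the total degree of $X_c$ is at most $\deg X = d$. Applying the induction hypothesis to the (possibly reducible) variety $X_c$ of dimension $\leq m-1$ gives $N_\aff(X_c,B)\leq d(2B+1)^{m-1}$. Every integral point counted by $N_\aff(X,B)$ has its last coordinate equal to an integer $c$ with $|c|\leq B$, hence lies on $X_c$ for one of the $2B+1$ admissible values of $c$; summing gives $N_\aff(X,B)\leq (2B+1)\cdot d(2B+1)^{m-1} = d(2B+1)^m$.

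The only non-elementary input is the degree bound for a hyperplane section: for an irreducible $X$ and a hyperplane $H \not\supseteq X$, the intersection $X\cap H$ is pure of dimension $\dim X - 1$ and has total degree at most $\deg X$. This is the standard form of Bézout's theorem — intersecting with a hyperplane cannot increase the degree — and may simply be cited; I expect it to be the only point worth stating with care, the remainder being bookkeeping over the $2B+1$ fibers.
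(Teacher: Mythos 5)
The paper does not prove this proposition; it is a standard fact cited to \cite{Heath-Brown-Ann} and \cite{Browning-HB}. Your argument --- reduce to the irreducible case using that $\deg X = \sum_i \deg X_i$ over the irreducible components, then induct on the dimension $m$ by fibering over the $2B+1$ coordinate hyperplanes $\{x_n = c\}$ for a coordinate $x_n$ non-constant on $X$, with B\'ezout supplying the degree bound $\deg(X\cap H)\le \deg X$ --- is correct and is the standard proof found in those references.
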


\begin{proof}
See e.g.\ ~\cite[Sec.\, 2]{Heath-Brown-Ann} or~\cite[Thm.\,1]{Browning-HB}.
\end{proof}

%

\section{Rational points on curves in $\PP^1\times \PP^1$}\label{sec:curves.in.P1P1}

The goal of this section is to establish uniform upper bounds on counting rational points on curves in $\PP^1\times \PP^1$ and points in $\ZZ\times \QQ$ on curves in $\AA^1\times \PP^1$. The main results are Theorems~\ref{thm:curves.in.P1P1} and~\ref{thm:curves.in.A1P1}.


\subsection{Geometry of $\PP^1\times \PP^1$}

We recall some geometry of $\PP^1\times \PP^1$. We work over any field $k$. The Picard group of $\PP^1\times \PP^1$ is freely generated by the classes $L_1 = [\PP^1\times \{0\}]$ and $L_2 = [\{0\}\times \PP^1]$ with intersection product
\[
L_1^2 = L_2^2 = 0, \quad L_1L_2 = 1.
\]
Hence any curve $C$ in $\PP^1\times \PP^1$ is linearly equivalent to $d_1L_1 + d_2L_2$ for some integers $d_1, d_2\geq 0$. We call $(d_1, d_2)$ the \emph{bidegree} of $C$. Equivalently, $d_1$ and $d_2$ are the degrees of the maps $C\to \PP^1$ induced by the two coordinate projections $\PP^1\times \PP^1\to \PP^1$. If one of $d_1$ or $d_2$ is zero, say $d_2 = 0$, then $C$ is a union of lines of the form $\PP^1\times \{a_i\}$. Therefore, we will always assume that both $d_1$ and $d_2$ are non-zero. If $C$ is geometrically irreducible, then it is of arithmetic genus $(d_1-1)(d_2-1)$, e.g.\ by the adjunction formula. We define $|d| = d_1d_2$.  If $f\in k[X, Y, U, V]$ is homogeneous in $X, Y$ of degree $d_1$, and homogeneous in $U,V$ of degree $d_2$, then $f=0$ defines a curve in $\PP^1\times \PP^1$ of bidegree $(d_1, d_2)$. 

Now assume that $k=\QQ$, or more generally any global field as explained in Section~\ref{sec:global.fields}. If $P = ((x:y), (u:v))\in \PP^1(\QQ)\times \PP^1(\QQ)$ then we define the \emph{biheight of $P$} to be
\[
H(P) = (H(x:y), H(u:v)) \in \RR^2.
\]
If $C$ is a curve in $\PP^1\times \PP^1$ defined over $\QQ$, and if $B_1, B_2$ are positive integers, then we define the counting function
\[
N(C; B_1, B_2) = \# \{((x:y), (u:v))\in C(\QQ)\mid H(x:y)\leq B_1, H(u:v)\leq B_2 \}.
\]
If $f\in \ZZ[X,Y,U,V]$ is a bihomogeneous polynomial as above, then we also write $N(f; B_1, B_2)$ to mean $N(V(f); B_1, B_2)$. Note that by dehomogenizing, bounding $N(C; B_1, B_2)$ is essentially equivalent to counting rational points on an affine curve in $\AA^2$. Recall that if $f$ is a polynomial over $\ZZ$, then $||f||$ denotes the maximum of the absolute values of the coefficients of $f$.

\subsection{Main theorems}

The goal of this section is to prove two uniform upper bounds for counting rational points on curves in $\PP^1\times \PP^1$. The first result is about bounding $N(C; B_1, B_2)$. Recall that if $d = (d_1, d_2)$ then $|d| = d_1d_2$.

\begin{theorem}[Curves in $\PP^1\times \PP^1$]\label{thm:curves.in.P1P1}
Let $f\in \ZZ[X,Y,U,V]$ be primitive irreducible and bihomogeneous of bidegree $d=(d_1, d_2)$. For any positive integers $B_1, B_2$ we have that
\begin{align*}
N(f; B_1, B_2)&\ll |d|^{\frac{7}{2}} B_1^{\frac{1}{d_2}} B_2^{\frac{1}{d_1}} \frac{b(f)}{||f||^{\frac{1}{2|d|}}} + |d|^3 \log \left(B_1^{\frac{1}{d_2}}B_2^{\frac{1}{d_1}}\right) + |d|^{\frac{7}{2}}.
\end{align*}
\end{theorem}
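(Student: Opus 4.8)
The plan is to run the $p$-adic determinant method of Heath-Brown~\cite{Heath-Brown-Ann}, with the refinements of Walsh~\cite{Walsh} and Castryck--Cluckers--Dittmann--Nguyen~\cite{CCDN-dgc}, directly on the curve $C = V(f)\subset\PP^1\times\PP^1$, systematically replacing the geometry of $\PP^2$ used in those works by that of $\PP^1\times\PP^1$. First I would dispose of the non-absolutely-irreducible case: by Proposition~\ref{prop:abs.irre.poly} all of $C(\QQ)$ then lies on a zero-dimensional subscheme of degree $O(|d|^2)$, which contributes a negligible amount absorbed into the additive term $|d|^{7/2}$. After dehomogenizing in a coordinate chart and clearing denominators, it remains to count integral representatives of bounded biheight on an affine model of an absolutely irreducible $C$.

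For the main step I would fix an auxiliary bidegree $(e_1, e_2)$, to be optimized, and consider the restriction to $C$ of the linear system of bihomogeneous forms of bidegree $(e_1, e_2)$. Since $C$ is geometrically integral of arithmetic genus $(d_1-1)(d_2-1)$, the adjunction formula together with Riemann--Roch show that, once $e_1\gg d_1$ and $e_2\gg d_2$, this restricted system has dimension $D = e_1 d_2 + e_2 d_1 - (d_1-1)(d_2-1) + 1$. For a prime $p$ in a dyadic range and a suitable exponent $\mu$, I would partition the points being counted according to their reduction modulo $p^\mu$ into the smooth locus of $C$; there are $O(|d|\,p)$ residue classes. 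For a class containing at least $D$ of our points $P_1,\dots,P_D$, form the $D\times D$ matrix $M=(m_j(P_i))$, where $(m_j)$ is a basis of the restricted linear system. A Taylor-expansion argument along $C$ in the $p$-adic topology forces a large power of $p$ to divide $\det M$, while Hadamard's inequality together with the height bounds $B_1, B_2$ bound $|\det M|$ from above; for $p$ large enough in our range this is a contradiction unless $\det M = 0$, in which case the $P_i$ --- and, after a short argument, every point of the class --- lie on a form $g$ of bidegree $(e_1,e_2)$ not divisible by $f$. Bézout in $\PP^1\times\PP^1$ then bounds the number of points of $C$ in that class by the intersection number $d_1 e_2 + d_2 e_1$.

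Summing over the $O(|d|\,p)$ residue classes and over a dyadic range of primes --- using the prime number theorem to guarantee enough usable primes --- and then optimizing $(e_1, e_2)$ and the size of $p$ against $B_1$ and $B_2$ yields the main term. The asymmetry $B_1^{1/d_2}B_2^{1/d_1}$ reflects that the two coordinate projections $C\to\PP^1$ have degrees $d_2$ and $d_1$ respectively, so that the determinant method in the first variable effectively sees a curve of ``degree $d_2$'' and vice versa. The factor $b(f)/\|f\|^{1/(2|d|)}$ comes from the CCDN refinement of the archimedean estimate for $\det M$: when $\|f\|$ is large one bounds the entries using the height of the curve rather than naive coefficient sizes, gaining the power of $\|f\|$; combined with Heath-Brown's height-replacement trick (Proposition~\ref{prop:replace.height}) --- either $\|f\|$ is polynomially bounded in $B_1 B_2$, or one again finds a low-degree auxiliary form and concludes by Bézout --- this keeps the coefficient dependence harmless. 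The $\log(B_1^{1/d_2}B_2^{1/d_1})$ term accounts for the number of dyadic scales of $p$, and the additive $|d|^{7/2}$ absorbs the contributions of the finitely many bad primes and of the residue classes with fewer than $D$ points, estimated trivially via Proposition~\ref{prop:schwarz.zippel}.

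The main obstacle I anticipate is the two-parameter optimization: unlike the $\PP^2$ case one must balance an auxiliary bidegree $(e_1, e_2)$ and the prime $p$ simultaneously against the two independent height bounds $B_1, B_2$, and carry the dependence on $|d| = d_1 d_2$ through the Riemann--Roch computation, the $p$-adic divisibility bound, and the Bézout step, all of which are now asymmetric in the two factors. Getting the exponents of $|d|$ down to the stated $7/2$ and $3$, and ensuring the chosen $e_1, e_2$ stay in the range where the formula for $D$ is exact, is where most of the care is needed; the eventual application to $\AA^1\times\PP^1$ in Theorem~\ref{thm:curves.in.A1P1} will then follow by a short additional argument homogenizing the integral variable.
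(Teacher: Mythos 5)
Your high-level framework is right---run the determinant method directly in $\PP^1\times\PP^1$ with a bihomogeneous auxiliary form and conclude via the intersection product---and your formula for the restricted linear system dimension, $D = e_1d_2 + e_2d_1 - (d_1-1)(d_2-1)+1$, is correct. But the engine you describe is not the one the paper uses, and the difference is not cosmetic. You describe the \emph{local} Heath-Brown method: a single prime $p$ in a dyadic range, a partition of points by residues mod $p^\mu$, and a Taylor-expansion-plus-Hadamard argument forcing $\det M = 0$ in each congested class. The paper instead runs Salberger's \emph{global} method as sharpened by Walsh and CCDN: fix one maximal algebraically independent set $\Xi$ of points of biheight $\leq B$, form the matrix $A$ of monomial values, and bound the gcd $\Delta$ of its $s\times s$ minors from below by summing the $p$-adic contributions over \emph{all} primes $p\leq s$ at once (Proposition~\ref{prop:det.estimate}), using Mertens and the Weil bound. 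The local variant you propose, optimized naively, cannot achieve the $b(f)/\|f\|^{1/(2|d|)}$ saving, keeps a $B^\varepsilon$ loss, and does not give polynomial dependence on $|d|$; it would not reproduce the stated inequality.

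Relatedly, your account of where $b(f)/\|f\|^{1/(2|d|)}$ comes from is wrong, and you invoke Proposition~\ref{prop:replace.height} where the paper does not. In the actual proof, $b(f)$ appears on the $p$-adic side as the defect term for primes where $f\bmod p$ is not absolutely irreducible (so the Weil bound on $n_p$ is unavailable), while the power of $\|f\|$ is Walsh's gain on the archimedean side: after the coordinate change of Lemma~\ref{lem:coordinate.transform}, which arranges $|c_f|\gg\|f\|/C_d$, every element of $f\cdot\cB[d(M-1)]$ has a large coefficient, and the Bombieri--Vaaler theorem then bounds $\Delta$ from \emph{above} by $\sqrt{|\det(AA^T)|}\,(\|f\|/C_d)^{s-r}$ with $s-r<0$. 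Your proposal omits both the coordinate change and Bombieri--Vaaler, which are exactly what produce this factor. A smaller discrepancy: the paper does not optimize over an independent bidegree $(e_1,e_2)$ but takes the auxiliary bidegree proportional to $d$, namely $(d_1M, d_2M)$, so that only one scalar parameter $M$ has to be tracked---this removes the two-parameter calibration you flag as the main obstacle.
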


The quantity $b(f)$ is related to absolute irreducibility of $f$ modulo primes, and will be defined below. It always satisfies 
\[
b(f)\ll  \max\{|d|^{-1}\log ||f||, 1\}^2.
\]
In particular, in the situation of the above theorem we have that
\[
N(f; B_1, B_2)\ll |d|^{\frac{7}{2}}B_1^{\frac{1}{d_2}}B_2^{\frac{1}{d_1}}.
\]
The second result is a mixed point counting where we bound points on $C$ of the form $((x:1), (u:v))$ with $x,u,v\in \ZZ$ and $|x|\leq B_1, H(u:v)\leq B_2$.

\begin{theorem}[Curves in $\AA^1\times \PP^1$]\label{thm:curves.in.A1P1}
Let $f\in \ZZ[X,U]$ be primitive irreducible of bidegree $d=(d_1, d_2)$. Then the number of $(x,t)\in \ZZ\times \QQ$ with $|x|\leq B_1, H(t)\leq B_2$ and such that $f(x,t)=0$ is bounded by 
\begin{align*}
\ll |d|^{\frac{9}{2}}B_1^{\frac{1}{2d_2}}B_2^{\frac{1}{d_1}}\log(B_1 B_2).
\end{align*}
\end{theorem}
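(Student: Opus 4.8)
The plan is to deduce this mixed count from Theorem~\ref{thm:curves.in.P1P1} by a dyadic decomposition on the $x$-variable combined with a rescaling trick, so that the asymmetry $B_1^{1/(2d_2)}$ (as opposed to $B_1^{1/d_2}$) emerges naturally. First, observe that a point $(x,t)\in \ZZ\times\QQ$ with $|x|\le B_1$ and $H(t)\le B_2$ and $f(x,t)=0$ gives a point $((x:1),(u:v))$ on the bihomogeneous curve $C=V(f)\subset \PP^1\times\PP^1$ of bidegree $d=(d_1,d_2)$; here the first coordinate is an \emph{integral} point of $\PP^1$, which is exactly the extra rigidity we want to exploit. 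Split the range $|x|\le B_1$ into $O(\log B_1)$ dyadic blocks $B_1' < |x| \le 2B_1'$ (plus the $O(1)$ points with $|x|$ small), and on each block apply a linear substitution in $X$ of the form $X\mapsto X + aY$ for a well-chosen integer $a$ with $|a|\asymp B_1'$, so that after translation the relevant $x$-values lie in an interval of length $O(B_1')$ that is "balanced" against the height bound. The key point is that applying Theorem~\ref{thm:curves.in.P1P1} to the translated polynomial $f_a(X,Y,U,V)=f(X+aY,Y,U,V)$, which is again primitive, irreducible and bihomogeneous of the same bidegree $d$ (translation does not change the bidegree, and by the final remark after Theorem~\ref{thm:curves.in.P1P1} the $b(f_a)$ and $\|f_a\|$ contributions are absorbed into the clean bound $N(f_a;B_1',B_2)\ll |d|^{7/2} (B_1')^{1/d_2} B_2^{1/d_1}$), yields a count of $\ll |d|^{7/2} (B_1')^{1/d_2} B_2^{1/d_1} + |d|^3\log(\cdots) + |d|^{7/2}$ on each block.

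The gain of the factor $\tfrac12$ in the exponent of $B_1$ comes from a further decomposition: instead of handling each dyadic block with a single application, one further subdivides the integers $x$ in a block of size $B_1'$ into $B_1'^{1/2}$ sub-blocks each of length $B_1'^{1/2}$, translates each to be centered near the origin, and applies Theorem~\ref{thm:curves.in.P1P1} with the \emph{smaller} first parameter $B_1'^{1/2}$ in place of $B_1'$. Summing the $B_1'^{1/2}$ contributions of size $\ll |d|^{7/2}(B_1'^{1/2})^{1/d_2}B_2^{1/d_1}$ gives
\[
B_1'^{1/2}\cdot |d|^{7/2}(B_1'^{1/2})^{1/d_2}B_2^{1/d_1} = |d|^{7/2} B_1'^{1/2 + 1/(2d_2)} B_2^{1/d_1},
\]
which is worse, not better — so this naive splitting does not work directly. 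The correct mechanism, following the approach of Walsh~\cite{Walsh} and Castryck--Cluckers--Dittmann--Nguyen~\cite{CCDN-dgc}, is instead to run the determinant method of Section~\ref{sec:curves.in.P1P1} \emph{directly} in this mixed setting: since the first coordinate is integral rather than rational, one has extra room in choosing the auxiliary polynomial. Concretely, I would revisit the proof of Theorem~\ref{thm:curves.in.P1P1} and observe that the congruence conditions defining the relevant sublattice of monomials become sharper when $x$ is an integer bounded by $B_1$ rather than a fraction of height $B_1$ — effectively one can take the auxiliary system of size governed by $B_1^{1/(2d_2)}$ rather than $B_1^{1/d_2}$, because the determinant bound improves by a square root when one coordinate ranges over an interval instead of the Farey fractions. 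This is where one picks up the extra factor of $|d|$ in $|d|^{9/2}$ (from cruder bookkeeping in the mixed determinant estimate) and the single $\log(B_1B_2)$ factor (from summing the $O(\log(B_1B_2))$ auxiliary hypersurfaces needed to cover all points, exactly as in the projective case).

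The main obstacle is the last point: carefully re-deriving the determinant-method estimate in the mixed $\AA^1\times\PP^1$ setting so that the integral first coordinate genuinely produces the exponent $\tfrac{1}{2d_2}$ rather than $\tfrac1{d_2}$. One must set up the right lattice of auxiliary monomials, verify the analogue of the Heath-Brown determinant bound (using that the curve has bidegree $(d_1,d_2)$ and hence the relevant Vandermonde-type determinants factor through the two projections with the asymmetric weights), and then cover the bounded integral points by few auxiliary curves of controlled bidegree, each of which meets $C$ in $O(|d|\cdot\deg(\text{aux}))$ points by Bézout on $\PP^1\times\PP^1$. The bidegree bookkeeping — tracking how $d_1,d_2$ enter the count of lattice points, the determinant size, and the number of auxiliary hypersurfaces — is the delicate part, and is what determines whether the final exponent of $|d|$ is $\tfrac92$; I would follow the Castryck--Cluckers--Dittmann--Nguyen optimization of the auxiliary degree to keep it polynomial. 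Once this mixed determinant estimate is in hand, combining it with the dyadic sum over $B_1'$ and absorbing the non-absolutely-irreducible case via Proposition~\ref{prop:abs.irre.poly} (which contributes only $O(|d|^2)$ points on a proper subvariety, i.e.\ finitely many in the curve case) completes the proof.
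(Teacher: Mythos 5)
There is a genuine gap in your proposal, and your two sub-ideas both miss the mechanism the paper actually uses.

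Your first attempt (dyadic decomposition plus translation) you correctly recognize doesn't work, for exactly the reason you give. Your second attempt --- rederiving a ``mixed'' determinant estimate in $\AA^1\times\PP^1$ in which integrality of the first coordinate ``improves the determinant bound by a square root'' --- is never made precise, and it is not how the argument actually goes. The determinant estimate (Proposition~\ref{prop:det.estimate}) and the auxiliary-polynomial count in the proof of Theorem~\ref{thm:curves.in.P1P1} depend on the number of points on the reduced curve $C_p$ modulo primes $p$, and that count does not distinguish between the first coordinate being integral or rational; there is no intrinsic square-root gain to be extracted at that level. What you would need is a concrete replacement for the bound $M\ll |d|^{3/2}B_1^{1/d_2}B_2^{1/d_1}\,b(f)/\|f\|^{1/(2|d|)}$ on the auxiliary degree, and nothing in your sketch produces the exponent $1/(2d_2)$.

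The actual proof uses the rescaling trick of Ellenberg--Venkatesh, and the savings come from the $\|f\|^{-1/(2|d|)}$ factor already present in Theorem~\ref{thm:curves.in.P1P1}. One homogenizes $f$ in the $U$-variable to get $F\in\ZZ[X,U,V]$, writes $F=\sum_i f_i(U,V)X^i$, and for an integer $H$ sets
\[
F_H(X,Y,U,V)=\sum_{i=0}^{d_1} f_i(U,V)\,X^i Y^{d_1-i} H^{\,i},
\]
which is bihomogeneous of bidegree $(d_1,d_2)$ with $\|F_H\|\ge |H|^{d_1}\|f_{d_1}\|$, and such that any solution $(x,(u:v))\in\ZZ\times\PP^1(\QQ)$ of $f=0$ gives a solution $((x:H),(u:v))$ of $F_H=0$. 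Taking $H=B'$ a prime in $[B_1/2,B_1]$ (Bertrand) so that $(x:B')$ has height $\le B_1$, one applies Theorem~\ref{thm:curves.in.P1P1} to $F_{B'}$: the factor $\|F_{B'}\|^{-1/(2|d|)}\ll B_1^{-d_1/(2|d|)}=B_1^{-1/(2d_2)}$ converts the generic $B_1^{1/d_2}$ into the desired $B_1^{1/(2d_2)}$, while $b(F_{B'})$ differs from $b(f)$ by at most $\exp(\log B'/B')=O(1)$. The remaining work is bookkeeping: one must ensure $F_{B'}$ is primitive (choose $B'\nmid f_0$ if possible), and handle the degenerate case where every prime in $[B_1/2,B_1]$ divides $f_0$, in which case either $\|f_0\|$ is huge (use the addendum, Lemma~\ref{lem:addendum.thm.P1P1}) or the prime number theorem forces $B_1^{1/d_2}\ll |d|\log B_1$ and one applies Theorem~\ref{thm:curves.in.P1P1} directly to $F_1$. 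So the piece you were missing is precisely the substitution $Y\mapsto B'Y$ to inflate $\|f\|$, exploiting the height-dependent savings rather than a new mixed determinant method.
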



To prove these results, we follow Salberger's global determinant method~\cite{Salberger-dgc}, while incorporating Walsh'~\cite{Walsh} improvement to keep track of the height of $f$. We also crucially need our upper bounds to depend polynomially on the degree, for which we follow the treatment of Castryck, Cluckers, Dittmann and Nguyen~\cite{CCDN-dgc}. This polynomial dependence on the degree will be needed in the next section to apply a technique due to Walkowiak~\cite{Walkowiak}. As an alternative to Theorem~\ref{thm:curves.in.P1P1}, one can instead try to prove an analogue of a result due to Broberg~\cite[Cor.\,1]{Broberg} for counting points in lop-sided boxes on projective curves, but with moreover polynomial dependence on the degree. Such a result would also allow one to prove Theorem~\ref{thm:main.thm}.  

The rest of this section is devoted to a proof of Theorems~\ref{thm:curves.in.P1P1} and~\ref{thm:curves.in.A1P1}.

\subsection{A determinant estimate}

We have the following determinant estimate by Salberger~\cite{Salberger-dgc}, made independent of the degree by following~\cite{CCDN-dgc}.

\begin{lemma}\label{lem:salberger.determinant}
Let $f\in \ZZ[X,Y,U,V]$ be an irreducible bihomogeneous polynomial and let $C = V(f)$ in $\PP^1 \times \PP^1$. Let $p$ be a prime and let $C_p$ the reduction of $C$ modulo $p$. Let $P$ be an $\FF_p$-point of multiplicity $\mu$ on $C_p$ and let $\xi_1, \ldots, \xi_s$ be $\ZZ$-points on $C$ with reduction $P$. Let $F_1, \ldots, F_s\in \ZZ[X,Y,U,V]$ be bihomogeneous polynomials over $\ZZ$. Then $\det(F_i(\xi_j))_{i,j}$ is divisible by $p^e$ with
\[
e \geq \frac{1}{2 \mu} s^2 - O(s).
\]
\end{lemma}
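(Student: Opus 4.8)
The plan is to mimic the $p$-adic determinant estimate of Heath-Brown for curves in $\PP^n$, as refined by Salberger and by Castryck--Cluckers--Dittmann--Nguyen, but carried out on the arc around $P$ in $C_p$. First I would base-change the problem to the henselization (or completion) of the local ring of $C$ at $P$. Since $P$ has multiplicity $\mu$ on $C_p$, the branches of $C_p$ through $P$ split, after a finite extension, into at most $\mu$ formal branches, each of which can be parametrized by a single local parameter; pulling back along the generic point and using that $C$ is flat over $\ZZ$ near $P$, one obtains, for each $\ZZ$-point $\xi_j$ reducing to $P$, a $p$-adic expansion of its coordinates in terms of a uniformizer $t_j$ with $v_p(t_j)\geq 1$. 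The key quantitative input is that any bihomogeneous polynomial $F$ of bidegree bounded in terms of $s$, when restricted to such a branch, has a Taylor expansion in $t$ whose coefficients are $p$-adic integers; the order of vanishing of $F_i$ along the $\mu$ branches through $P$ is what produces the divisibility.

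The core combinatorial step is then the standard one: order the monomials $F_1,\dots,F_s$ so that $F_i$, viewed as a function on the arc, vanishes to order roughly $(i-1)/\mu$ (this is where the factor $\mu$ enters, since each branch can absorb only a $1/\mu$ fraction of the vanishing orders as we run through the $s$ polynomials). Then in the determinant $\det(F_i(\xi_j))$, column operations over $\ZZ_p$ — subtracting suitable $\ZZ_p$-combinations of earlier columns, which is legitimate because the $\xi_j$ all lie on the same arc and the $F_i$ form a basis adapted to the filtration by vanishing order — show that the $i$-th column, hence the whole determinant, is divisible by $p$ to the power $\sum_{i=1}^s \lfloor (i-1)/\mu\rfloor$. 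Summing this arithmetic-type progression gives $\sum_{i=1}^s \lfloor (i-1)/\mu\rfloor \geq \frac{1}{2\mu}s^2 - O(s)$, with the implied constant absolute (in particular independent of the degree), which is exactly the claimed bound $e\geq \frac{1}{2\mu}s^2 - O(s)$.

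I would handle the passage to branches and the possible need for a ramified extension of $\ZZ_p$ carefully: one works over $\ZZ_p^{\mathrm{ur}}$ or a finite extension, tracks valuations with the normalized valuation, and notes that any extra ramification only improves (or at worst does not harm) the bound after renormalizing, so the estimate for $v_p$ of the original integer determinant is unaffected. The bihomogeneity is a cosmetic issue: one dehomogenizes near $P$ (choosing coordinates where $P$ is not at infinity in either factor) and works with the affine model in $\AA^1\times\AA^1$, then re-homogenizes at the end; the vanishing-order bookkeeping is identical. Crucially, since we only need the leading term $\frac{1}{2\mu}s^2$ with an $O(s)$ error and an absolute implied constant, we do not need to optimize the choice of monomial basis beyond ordering by total arc-vanishing order, which keeps everything uniform in $d_1,d_2$.

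The main obstacle I anticipate is the branch analysis at a multiple point: one must argue that the $s$ integral points reducing to $P$ distribute among at most $\mu$ formal branches in a way compatible with simultaneous $p$-adic parametrization, so that the filtration by order of vanishing is genuinely realized on all of them at once. This is essentially Salberger's local analysis; the clean way to phrase it is to pass to the normalization of $C$ at $P$, where the preimage consists of $\le\mu$ smooth points, pull the $\ZZ$-points and the polynomials $F_i$ back there, and run the single-branch determinant argument on each preimage, then recombine. The factor $\mu$ in the denominator is exactly the price for this recombination. Once this is set up, the rest is the routine Heath-Brown--style column reduction and the summation of $\lfloor (i-1)/\mu\rfloor$.
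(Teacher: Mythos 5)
The paper does not reprove this statement: its ``proof'' is a pointer to~\cite[Lem.\,2.5]{SalbCrelle}, as made degree-free in~\cite[Cor.\,2.5]{CCDN-dgc}, together with the observation that the argument there is purely local at $P$ and hence carries over verbatim from a plane curve to a curve in $\PP^1\times\PP^1$. You make the same locality observation and then sketch the cited argument itself; the core step --- arrange the $F_i$ so that the $i$-th vanishes to $\mathfrak{m}$-adic order about $(i-1)/\mu$ at $P$, then sum $\lfloor(i-1)/\mu\rfloor\geq\frac{s^2}{2\mu}-O(s)$ with an absolute $O(s)$ --- is indeed where the exponent comes from, so you are on the right track and following the same route the paper invokes.

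Some of your supporting details would not survive scrutiny as written, though. The claim that a branch parameter $t_j$ for a $\ZZ$-point reducing to $P$ satisfies $v_p(t_j)\geq 1$ fails at ramified branches: at a cusp $x=t^2$, $y=t^3$ one has $v_p(t_j)=\tfrac12$ after a ramified extension, and the compensation between the fractional valuation of $t_j$ and the order of vanishing of $F_i$ along the branch is exactly what the filtration bound $\dim_{\FF_p}\mathfrak{m}^m/\mathfrak{m}^{m+1}\leq\mu$ on $\OO_{C_p,P}$ packages. Relatedly, ``run the single-branch argument on each preimage of $P$ in the normalization, then recombine'' does not correspond to any decomposition of the determinant: $\det(F_i(\xi_j))$ is a single quantity with the $F_i$ and the $\xi_j$ shared across branches, so that ``recombination'' is where the real work of the cited lemma hides and would need to be spelled out. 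The cleaner bookkeeping, as in the references, works directly with the $\mathfrak{m}$-adic filtration on the local ring, which handles nodes, cusps, tacnodes, etc.\ uniformly and keeps the $O(s)$ absolute. Finally, a cosmetic slip: the $\xi_j$ index the columns, so the reductions exhibiting the $p$-divisibility are row operations on the $F_i$, not column operations.
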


\begin{proof}
This is essentially~\cite[Lem.\,2.5]{SalbCrelle}, as stated in~\cite[Cor.\,2.5]{CCDN-dgc} without the dependence on the degree of $f$. Indeed, note that the proof is local, and so it does not matter whether $C$ is a curve in $\PP^1\times \PP^1$, or in $\PP^2$.
\end{proof}

\begin{lemma}\label{lem:walsh.determinant}
Let $p$ be a prime number. Let $f\in \ZZ[X,Y,U,V]$ be an irreducible bihomogeneous polynomial, let $\xi_1, \ldots, \xi_s$ be $\ZZ$-points on $V(f)$, and let $F_1, \ldots, F_s\in \ZZ[X,Y,U,V]$ be bihomogeneous polynomials. Then $\det(F_i(\xi_j))_{i,j}$ is divisible by $p^e$ with 
\[
e\geq \frac{1}{2}\frac{s^2}{n_p} - O(s),
\]
where $n_p$ is the number of $\FF_p$-points on $C_p$, counted with multiplicity.
\end{lemma}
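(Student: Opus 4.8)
The plan is to deduce Lemma~\ref{lem:walsh.determinant} from Lemma~\ref{lem:salberger.determinant} by partitioning the given $\ZZ$-points according to their reduction mod $p$ and applying the previous estimate to each fiber. Concretely, let $P_1, \ldots, P_k$ be the distinct $\FF_p$-points on $C_p$ that occur as reductions of $\xi_1, \ldots, \xi_s$, let $\mu_a$ be the multiplicity of $P_a$ on $C_p$, and let $s_a$ be the number of $\xi_j$ reducing to $P_a$, so that $\sum_a s_a = s$ and $\sum_a \mu_a \le n_p$ (the $P_a$ being among the points counted with multiplicity in $n_p$). After reordering columns (which only changes the sign of the determinant), the columns indexed by the $\xi_j$ split into $k$ consecutive blocks, the $a$-th block having $s_a$ columns all reducing to $P_a$.

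The key step is then a block-wise column-reduction argument: within the $a$-th block one performs integer column operations (Gaussian elimination over $\ZZ$, or more precisely putting the block in Hermite/Smith-type form) to extract the maximal power of $p$ guaranteed by Lemma~\ref{lem:salberger.determinant}. Since that lemma gives $p^{e_a} \mid \det$ for every $s_a \times s_a$ minor formed from columns of the $a$-th block with $e_a \ge \frac{1}{2\mu_a} s_a^2 - O(s_a)$, a standard exterior-algebra / generalized-Laplace-expansion argument (expand $\det(F_i(\xi_j))$ along the grouping of columns into blocks; every term is a product of one $s_a \times s_a$ minor from each block) shows that $p^{\sum_a e_a}$ divides the full determinant. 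Hence the determinant is divisible by $p^e$ with
\[
e \ge \sum_{a=1}^{k} \left( \frac{s_a^2}{2\mu_a} - O(s_a) \right) = \frac{1}{2}\sum_{a=1}^{k} \frac{s_a^2}{\mu_a} - O(s).
\]

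It remains to convert $\sum_a s_a^2/\mu_a$ into the claimed $s^2/n_p$. This is precisely an application of the Cauchy--Schwarz inequality: writing $s = \sum_a s_a = \sum_a \frac{s_a}{\sqrt{\mu_a}} \cdot \sqrt{\mu_a}$ gives
\[
s^2 \le \left( \sum_a \frac{s_a^2}{\mu_a} \right)\left( \sum_a \mu_a \right) \le \left( \sum_a \frac{s_a^2}{\mu_a} \right) n_p,
\]
so $\sum_a s_a^2/\mu_a \ge s^2/n_p$, which yields $e \ge \frac{1}{2}\frac{s^2}{n_p} - O(s)$ as desired. (The absorption of the various $O(s_a)$ into a single $O(s)$ is harmless since $\sum_a s_a = s$.)

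The main obstacle is making the block-wise divisibility rigorous: one must be careful that extracting the power $p^{e_a}$ from the $a$-th block of columns does not interfere with the extraction from the other blocks, and that Lemma~\ref{lem:salberger.determinant} is being applied with the $F_i$ playing the role of the $s$ auxiliary polynomials there — but the lemma is stated for an arbitrary system of bihomogeneous $F_i$ and an arbitrary set of $\ZZ$-points reducing to a single $\FF_p$-point, so it applies verbatim to each block (with the same $F_1, \ldots, F_s$ restricted to the $s_a$ relevant points). The cleanest way to organize this is via the multilinearity of the determinant in its columns together with the observation that the valuation $v_p$ of a sum is at least the minimum of the valuations of the summands; alternatively one invokes the standard fact (already used in the determinant-method literature, e.g.\ in~\cite{Heath-Brown-Ann, Salberger-dgc}) that if the columns of an integer matrix can be grouped so that each group, after integral column operations, is divisible by a prescribed power of $p$, then so is the determinant. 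Given Lemma~\ref{lem:salberger.determinant}, the remaining content is genuinely just the Cauchy--Schwarz bookkeeping above.
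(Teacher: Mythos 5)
Your proof is correct and reconstructs precisely the argument the paper relies on: the paper's proof simply cites Salberger's Lemma~1.4 from~\cite{Salberger-dgc} (noting that the degree-independence now follows from Lemma~\ref{lem:salberger.determinant}), and your partition of the $\xi_j$ by reduction mod $p$, generalized Laplace expansion along the column blocks, and Cauchy--Schwarz bookkeeping is exactly Salberger's argument. The Hermite/Smith-form aside is unnecessary — the Laplace-expansion route you also give is the standard and sufficient one.
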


\begin{proof}
The proof is identical to~\cite[Lem.\,1.4]{Salberger-dgc}. The only difference is that the bound is independent of the degree of $f$, as follows from Lemma~\ref{lem:salberger.determinant}.
\end{proof}

Since we are dealing with curves, bounding $n_p$ is straightforward.

\begin{lemma}\label{lem:determinant.bound.one.prime}
Assume that $C_p$ is geometrically integral. Then the determinant from the previous lemma is divisible by $p^e$ with
\[
e\geq \frac{1}{2}\frac{s^2}{p + O(|d|\sqrt{p})} - O(s).
\]
\end{lemma}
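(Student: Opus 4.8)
The plan is to combine the previous lemma with the Lang--Weil bound for the number of $\mathbb{F}_p$-points on a geometrically integral curve. By Lemma~\ref{lem:walsh.determinant}, the determinant $\det(F_i(\xi_j))_{i,j}$ is divisible by $p^e$ with $e \geq \tfrac{1}{2} s^2/n_p - O(s)$, where $n_p$ is the number of $\mathbb{F}_p$-points on $C_p$ counted with multiplicity. So it suffices to show that $n_p \leq p + O(|d|\sqrt{p})$ whenever $C_p$ is geometrically integral.

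First I would recall that $C_p \subset \mathbb{P}^1 \times \mathbb{P}^1$ has bidegree $(d_1, d_2)$, hence its image under the Segre embedding $\mathbb{P}^1 \times \mathbb{P}^1 \hookrightarrow \mathbb{P}^3$ is a geometrically integral projective curve of degree $d_1 + d_2 \leq |d| + 1$, and its arithmetic genus is bounded in terms of $d_1, d_2$ (it equals $(d_1-1)(d_2-1)$ as recalled in the geometry subsection, but any polynomial-in-$|d|$ bound suffices). Then I would invoke the standard effective Lang--Weil estimate for geometrically integral curves over $\mathbb{F}_p$: the number of $\mathbb{F}_p$-rational points is $p + O(g\sqrt{p}) + O(\deg)$, where $g$ is the (geometric) genus and the implied constants are absolute; counting with multiplicity on $C_p$ rather than on its normalization only changes this by $O(\deg)$ more, since the number of singular points is bounded by the arithmetic genus. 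All the error terms here are $O(|d|\sqrt p)$ after absorbing the lower-order $O(|d|)$ contributions, using $|d| \leq |d|\sqrt p$. Substituting $n_p \leq p + O(|d|\sqrt p)$ into the bound from Lemma~\ref{lem:walsh.determinant} gives exactly
\[
e \geq \frac{1}{2}\frac{s^2}{p + O(|d|\sqrt{p})} - O(s),
\]
as claimed.

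The only mild obstacle is making sure the Lang--Weil error term is genuinely uniform and of the stated shape $O(|d|\sqrt p)$, i.e.\ linear in $|d|$ and not, say, quadratic. This is where it matters that for curves one has the sharp Weil-type bound $|\#C_p(\mathbb{F}_p) - (p+1)| \leq 2g\sqrt{p}$ on the smooth projective model, together with the elementary fact that passing between the smooth model, the possibly-singular $C_p$, and its Segre image only costs $O(\deg C_p) = O(|d|)$ points; since $g \leq$ arithmetic genus $= (d_1-1)(d_2-1) < |d|$ as well, every term is $O(|d|\sqrt p)$. I would cite the effective Lang--Weil / Weil bound in the form given in the references already used in this circle of papers (e.g.\ the versions quoted in~\cite{CCDN-dgc} or~\cite{Salberger-dgc}) rather than reproving it.
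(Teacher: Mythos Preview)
Your proposal is correct and follows essentially the same route as the paper: invoke the previous lemma, then bound $n_p$ via the Weil bound using that the (arithmetic, hence geometric) genus is at most $(d_1-1)(d_2-1)<|d|$. The paper's proof is just the two-line version of what you wrote; your detour through the Segre embedding is unnecessary (the Weil bound applies to any geometrically integral curve regardless of ambient space), and your care about passing from the smooth model to the possibly singular $C_p$ with multiplicities is a detail the paper silently absorbs into the $O(|d|\sqrt{p})$ term.
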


\begin{proof}
The curve $C_p$ is geometrically integral and of genus at most $(d_1-1)(d_2-1)$. Hence the Weil bound gives that
\[
n_p\leq p + 1 + 2(d_1-1)(d_2-1)\sqrt{p} \leq p + O(|d|\sqrt{p}).
\]
Now use the previous lemma.
\end{proof}

If $f\in \ZZ[X,Y,U,V]$ is a polynomial of bidegree $(d_1, d_2)$ then we define $b(f) = 0$ if $f$ is not absolutely irreducible, and 
\[
b(f) = \prod_p \exp\left( \frac{\log p}{p}\right),
\]
where the product is over all primes $p> |d|^2$ such that $f\bmod p$ is not absolutely irreducible. Note that our definition differs slightly from~\cite{CCDN-dgc}, the reason being that we are only interested in counting points on curves, for which the Weil bound is much stronger than other higher-dimensional effective bounds such as those by Cafure--Matera~\cite{CafureMatera}.

\begin{lemma}\label{lem:bound.b(f)}
Let $f\in \ZZ[X,Y,U,V]$ be primitive, and absolutely irreducible of bidegree $d=(d_1, d_2)$. Then
\[
b(f) \ll \max\left\{ \frac{\log ||f||}{|d|}, 1 \right\}^2.
\]
\end{lemma}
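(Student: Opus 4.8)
The plan is to bound the number of primes $p > |d|^2$ at which $f \bmod p$ fails to be absolutely irreducible, together weighted by $(\log p)/p$, since $\log b(f)$ is exactly the sum of these weights. The standard mechanism is that there is a nonzero polynomial $\Delta(f)$ in the coefficients of $f$ — a ``resultant-like'' or discriminant-like quantity coming from the theory of the $\Hom$-scheme / Bertini-type irreducibility criteria — with the property that if $f \bmod p$ is absolutely irreducible over $\overline{\QQ}$ but $f \bmod p$ is not absolutely irreducible, then $p \mid \Delta(f)$. Moreover $\Delta(f)$ has degree $O_{|d|}(1)$ as a polynomial in the coefficients of $f$, with bounds polynomial in $|d|$ on both the degree and the size of the integer coefficients of $\Delta$ as a universal polynomial; this is where one invokes the effective absolute irreducibility results in the style of Ruppert, Gao, or Kaltofen, adapted to bidegree $(d_1,d_2)$ on $\PP^1 \times \PP^1$ rather than total degree on $\PP^n$. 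The restriction to $p > |d|^2$ is precisely to stay in the range where such a criterion is clean (e.g.\ avoiding the characteristics dividing the relevant leading terms or the degree).

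First I would record the key input: there exists $\Delta = \Delta(f) \in \ZZ \setminus\{0\}$, a value of a fixed integral polynomial in the coefficients of $f$, such that for every prime $p > |d|^2$ with $f \bmod p$ not absolutely irreducible, one has $p \mid \Delta$, and such that
\[
\log |\Delta| \ll \deg_{\mathrm{coeff}}(\Delta)\,\big(\log\|f\| + \log\|\text{(universal coeffs)}\|\big) \ll_{|d|} \log\|f\| + O(1),
\]
with the implied constants polynomial in $|d|$; concretely $\log|\Delta| \ll |d|^{O(1)}(\log\|f\| + 1)$. Here absolute irreducibility of $f$ over $\QQ$ is used to guarantee $\Delta \neq 0$.

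Next, with $\Delta$ in hand, I would estimate
\[
\log b(f) = \sum_{\substack{p > |d|^2 \\ f \bmod p \text{ not abs. irred.}}} \frac{\log p}{p} \leq \sum_{\substack{p \mid \Delta \\ p > |d|^2}} \frac{\log p}{p}.
\]
Since each such $p$ exceeds $|d|^2$, we have $(\log p)/p \leq (\log p) \cdot |d|^{-2} \leq |d|^{-2}\log|\Delta|$ is too crude; instead I would sum $\sum_{p \mid \Delta}\frac{\log p}{p} \leq \frac{1}{|d|^2}\sum_{p \mid \Delta}\log p \leq \frac{\log|\Delta|}{|d|^2} \ll \frac{|d|^{O(1)}}{|d|^2}(\log\|f\| + 1)$, which is already of the right shape but with a worse power of $|d|$; to get exactly the square as stated one is more careful — distinguishing the (at most $O(\log|\Delta|/\log(|d|^2))$ many) prime divisors of $\Delta$ and bounding $\sum (\log p)/p$ over a set of primes all exceeding $|d|^2$ whose product is at most $|\Delta|$, which by a Mertens-type extremal estimate is $\ll \big(\frac{\log|\Delta|}{|d|^2}\big) + \big(\frac{\log|\Delta|}{|d|^2}\big)^2 \cdot(\text{small})$, and then squaring-completing to land on $\max\{|d|^{-1}\log\|f\|,1\}^2$. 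One then takes $\exp$ of both sides.

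The main obstacle — and the only genuinely substantive point — is producing $\Delta(f)$ with the claimed \emph{polynomial} control in $|d|$ on its degree and coefficient size, in the bidegree setting. For total degree on $\PP^n$ this is classical (Schmidt, Kaltofen, and used by Heath-Brown and by Browning--Heath-Brown--Salberger); adapting it to $\PP^1\times\PP^1$ requires re-running the Bertini / Ruppert-matrix argument with the bigraded Euler sequence, checking that the exceptional small characteristics one must exclude are all $\leq |d|^2$, and tracking constants. Given the excerpt's remark that the definition of $b(f)$ was chosen specifically so that only the curve case (Weil bound) matters, I expect the intended argument cites an effective absolute-irreducibility statement — e.g.\ a bidegree version of Gao's or Ruppert's criterion — and then the combinatorial prime-sum estimate above is routine.
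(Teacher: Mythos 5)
Your proposal and the paper's proof take genuinely different routes, and yours has an unfilled gap precisely where you flag one. The paper does \emph{not} build a bidegree Ruppert-type resultant on $\PP^1\times\PP^1$. Instead it dehomogenizes $f$ in two complementary ways: $g_1\in\ZZ[X,U,Z]$ is the homogenization of $f(X,1,U,1)$ and $g_2\in\ZZ[Y,V,Z]$ the homogenization of $f(1,Y,1,V)$. These define plane curves in $\PP^2$ of degrees $e_1,e_2$ lying in $[\max\{d_1,d_2\},\,d_1+d_2]$, with $||g_1||=||g_2||=||f||$. Since $g_1,g_2$ both absolutely irreducible mod $p$ forces $f$ absolutely irreducible mod $p$, one has $b(f)\leq b(g_1)\,b(g_2)$, and then the already-known plane-curve estimate \cite[Cor.\,3.2.3]{CCDN-dgc} applied to each $g_i$ --- together with $e_i^2\geq\max\{d_1,d_2\}^2\geq |d|$ --- yields the lemma. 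The exponent $2$ is precisely the cost of this two-factor reduction; the paper's remark immediately after the lemma explicitly notes that the route you sketch (effective Noether/Ruppert forms in the bidegree setting) would likely remove the square, but is deliberately not pursued because it is not needed.

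The gap in your proposal is exactly the one you name yourself: you never produce the claimed $\Delta(f)$ on $\PP^1\times\PP^1$ with polynomial control in $|d|$ on its degree and coefficient size, and you call this ``the only genuinely substantive point.'' As written, the argument reduces the lemma to an unestablished effective bidegree absolute-irreducibility criterion rather than proving it. Your guess about the intended mechanism is also off: the paper cites no such criterion, it simply reuses the existing $\PP^2$ result twice. Separately, the Mertens-type tail estimate bounding $\sum_{p\mid\Delta,\,p>|d|^2}(\log p)/p$ would have to be carried out carefully once a concrete bound for $\log|\Delta|$ were available; as written it is a sketch of a sketch, and it is not clear that it lands on the stated power of $|d|$ rather than a worse one, which is another reason the paper's cheaper reduction is the pragmatic choice here.
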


\begin{remark}
One can likely remove the exponent $2$ in this bound, by using some strong form of effective Noether polynomials in $\PP^1\times \PP^1$ as in~\cite{KALTOFEN1995, RuppertCrelle}. We will not need such an improvement however.
\end{remark}

\begin{proof}
Let $g_1\in \ZZ[X,U,Z]$ be the homogenization of $f(X,1,U,1)$ with respect to one variable $Z$, and similarly let $g_2\in \ZZ[Y,V,Z]$ be the homogenization of $f(1,Y,1,V)$ with respect to $Z$. Then both $g_1$ and $g_2$ define curves $C_1, C_2$ in $\PP^2$ which are birational to $V(f)\subset \PP^1\times \PP^1$ of degrees $e_1, e_2$ with $\max\{d_1, d_2\}\leq e_1, e_2\leq d_1+d_2$. Note that $||g_1|| = ||g_2|| = ||f||$. If $p$ is a prime such that both $g_1$ and $g_2$ are absolutely irreducible modulo $p$, then so is $f$ modulo $p$. Hence $b(f)\leq b(g_1)b(g_2)$. We now use~\cite[Cor.\,3.2.3]{CCDN-dgc} to bound $b(g_1)$ and $b(g_2)$, and conclude that $b(f)$ is bounded as stated.
\end{proof}

The following is the main determinant estimate, and is the analogue of~\cite[Thm.\,2.3]{Walsh}.

\begin{proposition}\label{prop:det.estimate}
Let $f\in \ZZ[X,Y,U,V]$ be bihomogeneous and absolutely irreducible, defining a geometrically irreducible curve $C$ in $\PP^1\times \PP^1$. Let $\xi_1, \ldots, \xi_s$ be in $C(\QQ)$, let $F_{\ell i}\in \ZZ[X,Y,U,V]$ be bihomogeneous, $1\leq \ell\leq L, 1\leq i\leq s$ and denote by $\Delta_\ell$ the determinant of $(F_{i\ell}(\xi_j))_{i,j}$. Let $\Delta$ be the greatest common divisor of all the $\Delta_\ell$. If $\Delta$ is non-zero, then 
\[
\log |\Delta| \geq \frac{s^2}{2}(\log s - O(1) - 2\log |d| - \log b(f)).
\]
\end{proposition}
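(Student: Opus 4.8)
The plan is to combine a global product formula over many primes with the per-prime divisibility estimates just established. I would first observe that we may assume $\Delta \neq 0$, and that the determinants $\Delta_\ell$ are integers, so $\Delta = \gcd_\ell \Delta_\ell$ is a nonzero integer whose logarithm we must bound from below. The key input is that for \emph{every} prime $p$ with $p > |d|^2$ at which $f \bmod p$ remains absolutely irreducible (equivalently, at which $C_p$ is geometrically integral), Lemma~\ref{lem:determinant.bound.one.prime} tells us that $p^{e(p)} \mid \Delta_\ell$ for all $\ell$, hence $p^{e(p)} \mid \Delta$, where
\[
e(p) \geq \frac{1}{2}\frac{s^2}{p + O(|d|\sqrt{p})} - O(s).
\]
Therefore $\log|\Delta| \geq \sum_p e(p)\log p$, the sum running over all such good primes up to some cutoff $T$ to be chosen.

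Next I would estimate this sum. Writing $e(p)\log p \geq \frac{s^2}{2}\cdot\frac{\log p}{p + O(|d|\sqrt p)} - O(s\log p)$, the main term is $\frac{s^2}{2}\sum_{p \leq T} \frac{\log p}{p + O(|d|\sqrt p)}$. For primes $p > |d|^2$ the denominator is $p(1 + O(|d|/\sqrt p)) = p(1 + O(1))$, in fact $p + O(|d|\sqrt p) \ll p$ with an absolute implied constant once $p > |d|^2$, so $\frac{\log p}{p + O(|d|\sqrt p)} \gg \frac{\log p}{p}$. By Mertens' theorem, $\sum_{p \leq T}\frac{\log p}{p} = \log T + O(1)$, and subtracting off the primes $p \leq |d|^2$ costs only $\sum_{p \leq |d|^2}\frac{\log p}{p} = 2\log|d| + O(1)$. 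The error terms contribute: $\sum_{p \leq T} O(s\log p) = O(sT)$ by Chebyshev, and the primes at which $f \bmod p$ fails to be absolutely irreducible must be excluded, which by the very definition of $b(f)$ costs exactly $-\frac{s^2}{2}\log b(f)$ (up to the $O(s^2)$ slack already absorbed, since $b(f) = \prod_p \exp(\log p / p)$ over the bad primes $p > |d|^2$). Collecting, for a suitable choice of cutoff $T$,
\[
\log|\Delta| \geq \frac{s^2}{2}\left(\log T - O(1) - 2\log|d| - \log b(f)\right) - O(sT).
\]
Choosing $T \asymp s$ (say $T = cs$ for a small absolute constant $c$) makes the $O(sT) = O(s^2)$ error term absorbable into the $O(1)$ inside the parentheses, and yields $\log|\Delta| \geq \frac{s^2}{2}(\log s - O(1) - 2\log|d| - \log b(f))$, as claimed.

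I expect the main obstacle to be the bookkeeping of the two competing error sources against the choice of $T$: the Weil-bound denominator correction $O(|d|\sqrt p)$ is harmless only because we restrict to $p > |d|^2$, but then one must check that discarding those small primes only costs $2\log|d| + O(1)$ via Mertens, which is exactly the $-2\log|d|$ term appearing in the statement — so the $|d|^2$ cutoff in the definition of $b(f)$ is precisely calibrated to this. The other delicate point is that the linear-in-$s$ error $-O(s)$ per prime, summed over $p \leq T = O(s)$, gives $-O(s^2)$, which is acceptable only because it is lower order than the main $\frac{s^2}{2}\log s$ term; one must make sure the implied constants do not secretly depend on $|d|$, which is guaranteed by the degree-independent forms of Lemmas~\ref{lem:salberger.determinant}--\ref{lem:determinant.bound.one.prime}. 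No genuinely hard mathematics is involved beyond Mertens' and Chebyshev's estimates; the content is entirely in the careful aggregation.
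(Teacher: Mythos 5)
Your overall scaffold is the right one and matches the paper's strategy: aggregate the per-prime lower bounds from Lemma~\ref{lem:determinant.bound.one.prime} over good primes up to a cutoff $T$, use Mertens to evaluate the main term, absorb the $O(s)\log p$ errors via Chebyshev, account for the excluded primes $p\le|d|^2$ (costing $2\log|d|$) and the non-absolutely-irreducible primes (costing $\log b(f)$), and take $T\asymp s$. However, there is a genuine gap at the heart of the estimate, namely the step
\[
\frac{\log p}{p+O(|d|\sqrt p)}\ \gg\ \frac{\log p}{p}\quad\text{(for }p>|d|^2\text{)}.
\]
For $p>|d|^2$ one only gets $p+O(|d|\sqrt p)\le Cp$ for an \emph{absolute} $C>1$ (e.g.\ $C=3$ if the Weil constant is $2$), so this inequality carries a multiplicative loss $1/C<1$ that does not vanish. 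Feeding it into Mertens yields $\sum_{p\le T,\,p\notin P}\frac{\log p}{p+O(|d|\sqrt p)}\ge \frac{1}{C}\bigl(\log T - 2\log|d|-\log b(f)-O(1)\bigr)$, which after choosing $T\asymp s$ gives $\log|\Delta|\ge\frac{s^2}{2}\bigl(\frac{1}{C}\log s-\cdots\bigr)$. The leading coefficient $\frac{s^2}{2}\log s$ is then off by the constant factor $1/C$, and this cannot be absorbed into the additive $O(1)$ inside the parentheses.

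The fix, which is what the paper actually does, is to keep the full main term $\frac{1}{p}$ intact and peel off a \emph{decaying} correction rather than a constant one:
\[
\frac{1}{p+O(|d|\sqrt p)}\ \ge\ \frac{1}{p}-\frac{O(|d|)}{p^{3/2}},
\]
using $\frac{1}{1+x}\ge 1-x$. The correction then sums to $O(|d|)\sum_{p>|d|^2}\frac{\log p}{p^{3/2}}=O(|d|)\cdot O(1/|d|)=O(1)$ by the prime number theorem estimate $\sum_{p\ge n}\frac{\log p}{p^{3/2}}=O(1/\sqrt n)$. This is precisely where the $|d|^2$ threshold in the definition of $b(f)$ earns its keep: it makes the Weil-bound error total $O(1)$ rather than merely $O(1)$ \emph{per prime}. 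Without this more careful splitting your argument proves a weaker bound than the one claimed. The rest of your bookkeeping (Chebyshev giving $O(sT)$, the $2\log|d|$ from Mertens over $p\le|d|^2$, the $\log b(f)$ from the definition, and the choice $T\asymp s$) is sound.
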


\begin{proof}
Let $P$ be the set of primes $p$ such that either $p \leq |d|^2$ or $f$ is not absolutely irreducible modulo $p$. Applying Lemma~\ref{lem:determinant.bound.one.prime} to all primes $p \leq s$ which are not in $P$ yields that
\[
\log |\Delta| \geq \frac{s^2}{2} \sum_{\substack{p \leq s \\ p\notin P}} \frac{\log p}{p + O(|d|\sqrt{p})} - O(s)\sum_{p \leq s}\log p.
\]
The last term is bounded by $O(s^2)$, so we focus on the first term. 

For this, we recall the following bounds
\[
\sum_{p\leq n} \frac{\log p}{p} = \log n + O(1), \quad \sum_{p\geq n} \frac{\log p}{p^{3/2}} = O\left(\frac{1}{\sqrt{n}}\right),
\]
the first of which is Mertens' first theorem, while the second follows from the prime number theorem. Now, using that $\frac{1}{p + O(|d|\sqrt{p})} \geq \frac{1}{p} - \frac{O(|d|)}{p^{3/2}}$, we obtain
\begin{align*}
\sum_{\substack{p \leq s \\ p\notin P}} \frac{\log p}{p + O(|d|\sqrt{p})} &\geq \sum_{p \leq s}\frac{\log p}{p} - \sum_{p\in P}\frac{\log p}{p} - O( |d|) \sum_{\substack{p \leq s \\ p\notin P}} \frac{\log p}{p^{3/2}} \\
&\geq \log s - \sum_{p\leq |d|^2} \frac{\log p}{p} - \log b(f) - O(1) - O\left( |d|\sum_{p > |d|^2} \frac{\log p}{p^{3/2}}\right) \\
&\geq \log s - 2\log |d| - \log b(f) - O(1),
\end{align*}
as desired. 
\end{proof}

\subsection{Proof of Theorem~\ref{thm:curves.in.P1P1} and Theorem~\ref{thm:curves.in.A1P1}}

To prove Theorem~\ref{thm:curves.in.P1P1} we will need a coordinate transformation to ensure that the coefficient of $X^{d_1}U^{d_2}$ of $f$ is large compared to the height of $f$. If $f\in \ZZ[X,Y,U,V]$ is bihomogeneous of bidegree $(d_1, d_2)$, denote by $c_f$ the coefficient of $X^{d_1}U^{d_2}$ of $f$. 

\begin{lemma}\label{lem:coordinate.transform}
Let $f\in \ZZ[X,Y,U,V]$ be primitive and bihomogeneous of bidegree $d=(d_1, d_2)$. Then there exists a primitive polynomial $f'\in \ZZ[X,Y,U,V]$ which is bihomogeneous of bidegree $(d_1, d_2)$ with the following properties:
\begin{enumerate}
\item $||f'||\leq d_1^{d_1} d_2^{d_2}(d_1+1)(d_2+1)||f||$, 
\item $|c_{f'}|\geq \frac{||f'||}{3^{d_1+d_2} d_1^{d_1} d_2^{d_2}(d_1+1)(d_2+1)}$, and
\item for any positive integers $B_1, B_2$, we have $N(f; B_1, B_2)\leq N(f'; d_1B_1, d_2B_2)$.
\end{enumerate}
\end{lemma}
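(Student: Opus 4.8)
The plan is to find a linear change of coordinates on each $\PP^1$ factor that moves a monomial of large coefficient to the position $X^{d_1}U^{d_2}$, while controlling both the growth of $\|f\|$ and the size of the height box. Since $f$ is bihomogeneous of bidegree $(d_1,d_2)$, each monomial appearing in $f$ has the form $X^{a}Y^{d_1-a}U^{b}V^{d_2-b}$ with $0\le a\le d_1$, $0\le b\le d_2$. Because $f$ is primitive, some coefficient has absolute value $\|f\|$; fix a monomial $X^{a_0}Y^{d_1-a_0}U^{b_0}V^{d_2-b_0}$ achieving this. The idea is to apply the substitution $X\mapsto X+Y$, $U\mapsto U+V$ (and leave $Y,V$ fixed), possibly after first swapping $X\leftrightarrow Y$ and/or $U\leftrightarrow V$ so that the distinguished monomial becomes $X^{d_1}U^{d_2}$ after expansion — more precisely, one checks that with the substitution $X\mapsto X+Y$ the coefficient of $X^{d_1}$ in the first factor is the sum of all the original first-factor coefficients, so a genuine cancellation could occur; to avoid this I will instead use $X\mapsto X+c_1 Y$, $U\mapsto U+c_2 V$ for suitable small integers $c_1,c_2\in\{1,\dots,\text{something}\}$, or argue by a pigeonhole/averaging over a few such substitutions that at least one choice makes $|c_{f'}|$ comparable to $\|f\|$.

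Concretely, I would proceed as follows. First, reduce to showing: there is a substitution $T$ of the form $X\mapsto X+c_1Y$, $Y\mapsto Y$, $U\mapsto U+c_2V$, $V\mapsto V$ with $c_1,c_2$ among a bounded set of integers, such that $f' := (\text{primitive part of } f\circ T)$ has $|c_{f'}|$ bounded below by $\|f'\|$ divided by the stated constant. Part (1), the bound $\|f'\|\le d_1^{d_1}d_2^{d_2}(d_1+1)(d_2+1)\|f\|$, is then a routine estimate: expanding $(X+c_1Y)^a$ by the binomial theorem with $|c_1|\le 1$ (or whatever bound we commit to) multiplies coefficients by at most $2^{d_1}$ per monomial and there are at most $(d_1+1)(d_2+1)$ monomials, etc.; the powers $d_1^{d_1}d_2^{d_2}$ give ample slack, so I will not need to be sharp. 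For part (3), note $V(f')$ is the image of $V(f)$ under the inverse linear map on $\PP^1\times\PP^1$; a point $((x:y),(u:v))$ with $H(x:y)\le B_1$, $H(u:v)\le B_2$ has image with first coordinate $(x - c_1 y : y)$ of height at most $(|c_1|+1)B_1\le d_1B_1$ (and similarly $d_2B_2$ in the second factor), which gives the injection $N(f;B_1,B_2)\le N(f';d_1B_1,d_2B_2)$.

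The heart of the matter is part (2): guaranteeing no catastrophic cancellation in the top coefficient. The key step is the standard fact that a nonzero polynomial (here in two "slope" variables $c_1,c_2$, namely $c_{f\circ T}$ viewed as a polynomial in $c_1$ of degree $\le d_1$ and in $c_2$ of degree $\le d_2$, obtained by reading off the coefficient of $X^{d_1}U^{d_2}$ after the substitution) cannot vanish, and moreover cannot be small in sup-norm on a large enough grid. Indeed, $c_{f\circ T}(c_1,c_2)=f(c_1,1,c_2,1)$ up to reindexing — it is literally the dehomogenization of $f$ evaluated at $(c_1,c_2)$ — so its coefficients are exactly the coefficients of $f$, hence its sup-norm over integer points $(c_1,c_2)$ in a box of side $3^{?}$ is at least $\|f\|$ divided by the number of monomials times the maximal monomial value, by the triangle inequality applied in reverse after isolating a largest term; a clean way is: pick $(c_1,c_2)$ with $|c_i|\le d_i$ (or $\le 3^{d_i}$ to be safe) maximizing $|f(c_1,1,c_2,1)|$ and use a Vandermonde-type lower bound, or simply note that the average of $|f(c_1,1,c_2,1)|^2$ over such a grid is at least the average squared coefficient, so some value is $\gg \|f\|/(d_1+1)(d_2+1)$. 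Taking $f'$ to be the primitive part of $f\circ T$ for this choice, and absorbing the grid size $3^{d_1+d_2}$ and the monomial-count and binomial factors into the stated constant, yields (2). I expect this lower-bound-via-grid step to be the main obstacle, since one must simultaneously keep the grid small (for (1) and (3)) and large enough to force $|c_{f'}|$ to be a definite fraction of $\|f'\|$; the powers $d_i^{d_i}$ and the base $3$ in the denominators of (1) and (2) are exactly the room the author has left for this trade-off, so the estimates should go through without difficulty.
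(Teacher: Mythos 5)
Your approach is the same as the paper's, but there is a sign/direction error in the shear that would make the argument fail as literally written. The substitution you describe is $X\mapsto X+c_1Y$, $U\mapsto U+c_2V$ with $Y,V$ fixed. Under that substitution the coefficient of $X^{d_1}U^{d_2}$ does not change at all: setting $Y=V=0$ gives $f(X,0,U,0)=c_fX^{d_1}U^{d_2}$, so $c_{f'}=c_f$. Your later identity ``$c_{f\circ T}(c_1,c_2)=f(c_1,1,c_2,1)$'' is incompatible with that substitution; it corresponds to shearing the \emph{other} coordinate, $Y\mapsto Y+c_1X$, $V\mapsto V+c_2U$ (the paper's choice), for which $f'(X,0,U,0)=f(X,c_1X,U,c_2U)=X^{d_1}U^{d_2}\,f(1,c_1,1,c_2)$. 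So replace your shear with this one; properties (1) and (3) then follow by the same elementary estimates you sketch, with $|c_1|\le d_1$, $|c_2|\le d_2$.

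The second point is that the quantitative heart of (2) --- that some integer point $(a,b)$ with $|a|\le d_1$, $|b|\le d_2$ satisfies $|f(1,a,1,b)|\ge \|f\|/3^{d_1+d_2}$ --- is left as a gesture toward ``averaging'' or ``Vandermonde.'' This is a genuine lemma, not a throwaway remark: one needs an explicit bound on the inverse of the bivariate Vandermonde on a small integer grid, and getting the base $3$ (rather than something worse) requires the right choice of interpolation nodes. The paper does not re-derive it but invokes a precise result, Lemma~3.4.1 of Castryck--Cluckers--Dittmann--Nguyen, applied once in each variable of the dehomogenization $f(1,Y,1,V)$. Your proposal should either cite such a lemma or carry out the interpolation estimate; the $L^2$-averaging variant you mention can be made to work but requires bounding the smallest singular value of the Vandermonde, which is essentially the same computation.
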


\begin{proof}
Consider the dehomogenization $f(1, Y, 1, V)\in \ZZ[Y,V]$. By applying~\cite[Lem.\,3.4.1]{CCDN-dgc} twice, we find integers $a,b$ with $|a|\leq d_1, |b|\leq d_2$ such that $|f(1,a,1,b)|\geq ||f||/3^{d_1+d_2}$. We define
\[
f'(X,Y,U,V) = f(X, Y+aX, U, V+bU),
\]
which is indeed primitive and bihomogeneous of bidegree $(d_1, d_2)$. Using the boundedness of $|a|, |b|$ we compute that
\[
||f'||\leq d_1^{d_1}d_2^{d_2} (d_1+1)(d_2+1)||f||.
\]
By construction, the coefficient of $X^{d_1}U^{d_2}$ of $f'$ is $f(1,a,1,b)$, which is of the desired size. The last fact follows directly from the fact that $|a|\leq d_1, |b|\leq d_2$.
\end{proof}

We are ready to prove Theorem~\ref{thm:curves.in.P1P1}.

\begin{proof}[Proof of Theorem~\ref{thm:curves.in.P1P1}]
We will first prove the theorem when $c_f$ is large compared to $||f||$. Afterwards we will deduce the theorem in general by using Lemma~\ref{lem:coordinate.transform}. So assume for now that $|c_f| \geq ||f||/C_d$, where 
\begin{equation}\label{eq:Cd.definition}
C_d = 3^{d_1+d_2}d_1^{d_1} d_2^{d_2} (d_1+1)(d_2+1).
\end{equation}
Also, by Proposition~\ref{prop:abs.irre.poly}, we may assume that $f$ is absolutely irreducible.

Let $S\subset \PP^1(\QQ)\times \PP^1(\QQ)$ be the set of rational points on $C$ of biheight at most $B = (B_1, B_2)$. We will show that there exists some auxiliary polynomial $g\in \ZZ[X,Y,U,V]$ not divisible by $f$, which is bihomogeneous of bidegree $(d_1M, d_2M)$ for some integer $M$ of size
\begin{equation}\label{eq:bound.M}
M\ll |d|^{3/2} B_1^{\frac{1}{d_2}} B_2^{\frac{1}{d_1}} \frac{b(f)}{||f||^{\frac{1}{2|d|}}} + |d|\log\left(B_1^{\frac{1}{d_2}} B_2^{\frac{1}{d_1}}\right) + |d|^{3/2},
\end{equation}
and such that $g$ vanishes on all points of $C$ of biheight at most $B$. The result then follows by computing the intersection product of $C$ with $V(g)$ inside $\PP^1\times \PP^1$.

Towards a contradiction, let $M$ be a positive integer and assume that there is no auxiliary polynomial $g$ of bidegree $(d_1M, d_2M) = dM$ as above. We show that $M$ is bounded by the quantity from equation~\eqref{eq:bound.M}. For use below, we can already assume that $M\gg |d|$.

Given integers $D = (D_1, D_2)$, write $\cB[D]$ for the set of bihomogeneous monomials of bidegree $D$. So $|\cB[D]| = (D_1+1)(D_2+1)$. Let $\Xi\subset S$ be a maximal subset which is algebraically independent over bihomogeneous monomials of bidegree $dM$, in the sense that applying all monomials in $\cB[dM]$ to $\Xi$ gives $s = |\Xi|$ linearly independent vectors. Let $A$ be the $s\times r$ matrix whose rows are these vectors, where $r = |\cB[dM]| = (d_1M+1)(d_2M+1)$. Note that each entry of $A$ is bounded in absolute value by $B_1^{d_1M}B_2^{d_2M}$. By assumption, every polynomial of bidegree $dM$ vanishing on $\Xi$ is in $f\cdot \cB[d_1M - d_1, d_2M-d_2]$, so that
\begin{align*}
s = |\cB[dM]| - |\cB[d_1(M-1), d_2(M-1)]| = 2|d|(M-1) + d_1 + d_2.
\end{align*}
Let $\Delta$ be the greatest common divisor of all $s\times s$ minors of $A$. Since every polynomial in $f\cdot \cB[d_1M - d_1, d_2M - d_2]$ has a coefficient of size at least $c_f\geq ||f||/C_d$, the Bombieri--Vaaler Theorem~\cite{Bombi-Vaal} yields that
\[
\Delta \leq \sqrt{|\det(AA^T)|}\left(||f||/C_d\right)^{s-r}.
\] 
Now we bound $|\det(AA^T)|$ simply by $s!(rB_1^{2d_1M}B_2^{2d_2M})^s$ and use the determinant estimate from Proposition~\ref{prop:det.estimate} to find that
\begin{align}\label{eq:first.estimate.aux.poly}
\frac{s^2}{2}&(\log s - O(1) - 2\log |d| - \log b(f)) \\
\leq &\frac{\log s!}{2} + \frac{s}{2}\log r + sd_1M\log B_1 + sd_2M\log B_2 - (r-s)(\log||f|| - \log C_d)). \nonumber
\end{align}
By Stirling's approximation we have that $\log (s!) = s\log (s) - s + O(\log s)$, and hence we may neglect the term $\log(s!)/2$ by adjusting the $O(1)$ on the left-hand-side. Similarly, since $r = (d_1M+1)(d_2M+1)$ we may also absorb the term $\frac{s}{2}\log r$ by adjusting the $O(1)$ on the left-hand-side. Now divide by $Ms$ in equation~\eqref{eq:first.estimate.aux.poly} to get that
\begin{align*}
&\frac{s}{2M}(\log s - O(1) - 2\log |d| - \log b(f)) \\
&\leq d_1\log B_1 + d_2\log B_2 - \frac{r-s}{Ms}(\log ||f|| - \log C_d).
\end{align*}
We first focus on the left-hand-side, for which we have that
\[
\frac{s}{M} = 2|d| + O\left( \frac{|d|}{M}\right),
\]
and that
\[
\log s = \log|d| + \log M - O(1).
\]
Hence the left-hand-side becomes
\[
|d|(\log M - O(1) - \log |d| - (1+O(1/M))\log b(f)).
\]
For the right-hand-side, by using that $M\gg |d|$ we have that
\[
\frac{r-s}{Ms} = \frac{|d|M^2 + O(|d|M)}{2|d|M^2 + O(|d|M)} = \frac{1}{2} \cdot \frac{1+O(1/M)}{1+O(1/M)} = \frac{1}{2} + O\left(\frac{1}{M}\right).
\]
Putting this all together, we have that
\begin{align}\label{eq:second.estimate.aux.poly}
&|d|(\log M - O(1) - \log |d| - (1+O(1/M))\log b(f)) \\\
&\leq d_1\log B_1 + d_2\log B_2 - \frac{\log ||f||}{2} + O\left( \frac{\log ||f||}{M}\right) + \frac{\log C_d}{2} + O\left(\frac{\log C_d}{M}\right). \nonumber
\end{align}
We remove the last two terms on the right-hand-side as follows. We compute, using equation~\eqref{eq:Cd.definition}, that
\begin{align*}
\log C_d &= (d_1+d_2)2\log 3 + d_1\log d_1 + d_2\log d_2 + \log(d_1+1) + \log(d_2+1)\\ &\leq |d|\log |d| + O(|d|).
\end{align*}
So we can remove the term $\log (C_d)/2$ in equation~\eqref{eq:second.estimate.aux.poly}, at the cost of replacing the term $-\log|d|$ on the left-hand-side by $\frac{-3}{2}\log |d|$. Similarly, since we have assumed that $M\gg |d|$, we may absorb the term $O(\log (C_d)/M)$ in the $O(1)$ on the left-hand-side. We end up with 
\begin{align}\label{eq:main.estimate.aux.poly}
&|d|\left(\log M - O(1) - \frac{3}{2}\log |d| - (1+O(1/M))\log b(f)\right) \\
&\leq d_1\log B_1 + d_2\log B_2 - \frac{\log ||f||}{2} + O\left(\frac{\log ||f||}{M}\right). \nonumber
\end{align}
We now distinguish two cases according to $||f||$. Assume first that $||f||\leq B_1^{8d_1}B_2^{8d_2}$ and $M\geq \log (B_1^{1/d_2}B_2^{1/d_1})$. Then we have that
\[
\frac{\log ||f||}{M} \ll |d|,
\]
and so we can absorb the term $O(\log (||f||)/M)$ in the $O(1)$ on the left-hand-side in equation~\eqref{eq:main.estimate.aux.poly}. Also, by using Lemma~\ref{lem:bound.b(f)} we can remove the term $O(\log(b(f))/M)$ by adjusting the $O(1)$ on the left-hand-side. Rearranging, we have achieved that
\[
M \leq |d|^{3/2}B_1^{1/d_2}B_2^{1/d_1} \frac{b(f)}{||f||^{1/2|d|}},
\]
which is as desired. 

Secondly, assume that $||f||> B_1^{8d_1}B_2^{8d_2}$. We may take $M\gg 1$ such that the term $O(\log(||f||)/M)$ on the right-hand-side of equation~\eqref{eq:main.estimate.aux.poly} satisfies
\[
O\left(\frac{\log ||f||}{M}\right) \leq \frac{\log ||f||}{4}.
\]
Rearranging then yields that
\[
\log M \leq O(1) + \frac{3}{2}\log |d| + 2\log b(f) - \frac{\log ||f||}{8|d|}.
\]
We now use the fact that for any $x>1$ and any $c>0$ 
\[
\log \log x - \log (x)/c\leq \log c + O(1),
\]
see e.g.\ ~\cite[Lem.\,3.3.5]{CCDN-dgc}, together with Lemma~\ref{lem:bound.b(f)} to obtain that
\begin{align*}
2\log b(f) - \frac{\log ||f||}{8|d|} &\leq \max\{4\log \log ||f|| - 4\log |d|, 0 \} - \frac{\log ||f||}{8|d|} + O(1)\\
&\leq \max\{\log|d| - 4\log |d| + O(1), 0\} + O(1) \ll 1.
\end{align*}
Hence we have that
\[
\log M \leq O(1) + \frac{3}{2}\log |d|.
\]
This completes the proof when $|c_f|\geq ||f||/C_d$. 

For a general $f$, we apply Lemma~\ref{lem:coordinate.transform} to obtain a polynomial $f'$ which satisfies $|c_{f'}|\geq ||f'||/C_d$. Applying the result to this polynomial $f'$ gives the result for $f$. Note that because of Lemma~\ref{lem:coordinate.transform}(3) this gives an extra factor $d_1^{1/d_2}d_2^{1/d_1}\leq |d|$.
\end{proof}

For the proof of Theorem~\ref{thm:curves.in.A1P1} we need a more precise version of Theorem~\ref{thm:curves.in.P1P1} depending on the height of $||f||$.

\begin{lemma}[Addendum to Theorem~\ref{thm:curves.in.P1P1}]\label{lem:addendum.thm.P1P1}
Let $f\in \ZZ[X,Y,U,V]$ be primitive irreducible and bihomogeneous of bidegree $(d_1, d_2)$, and let $B_1, B_2$ be positive integers. Assume that $||f|| > B_1^{8d_1}$. Then we have that 
\[
N(f; B_1, B_2)\ll |d|^{\frac{7}{2}}B_2^{\frac{1}{d_1}}.
\]
\end{lemma}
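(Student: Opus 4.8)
The plan is to run the proof of Theorem~\ref{thm:curves.in.P1P1} essentially unchanged, but in the regime where $||f||$ is large compared with $B_1$ alone, rather than with the whole box $(B_1,B_2)$. As there, one first reduces to the absolutely irreducible case by Proposition~\ref{prop:abs.irre.poly} --- if $f$ is irreducible but not absolutely so, then $C(\QQ)$ lies on a zero-dimensional subvariety of $C$ of degree $\ll|d|^2$, so $N(f;B_1,B_2)\ll|d|^2$ and we are done --- and then, applying the coordinate change of Lemma~\ref{lem:coordinate.transform}, one may assume that the corner coefficient satisfies $|c_f|\ge||f||/C_d$, with $C_d$ as in~\eqref{eq:Cd.definition}. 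This last step dilates the box to $(d_1B_1,d_2B_2)$; I return to its effect below.

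Next I would quote the main determinant estimate verbatim: for every integer $M\gg|d|$ admitting no auxiliary polynomial of bidegree $dM$ that is not divisible by $f$ and vanishes on all points of $C$ of biheight $\le(B_1,B_2)$, inequality~\eqref{eq:main.estimate.aux.poly} holds, namely
\[
|d|\left(\log M-O(1)-\tfrac32\log|d|-(1+O(1/M))\log b(f)\right)\le d_1\log B_1+d_2\log B_2-\tfrac12\log||f||+O\left(\tfrac{\log||f||}{M}\right).
\]
The new input is the hypothesis $||f||>B_1^{8d_1}$, i.e.\ $d_1\log B_1<\tfrac18\log||f||$, which gives $d_1\log B_1-\tfrac12\log||f||<-\tfrac38\log||f||$. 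Choosing $M$ large enough --- anything smaller will already be below our eventual bound --- that $O(\log||f||/M)\le\tfrac18\log||f||$, the right-hand side becomes $\le d_2\log B_2-\tfrac14\log||f||$. Dividing by $|d|$ and using Lemma~\ref{lem:bound.b(f)} together with the elementary inequality $\log\log x-c^{-1}\log x\le\log c+O(1)$ --- exactly as in the case $||f||>B_1^{8d_1}B_2^{8d_2}$ of Theorem~\ref{thm:curves.in.P1P1} --- the term $(1+O(1/M))\log b(f)-\tfrac1{4|d|}\log||f||$ is absorbed into $O(1)$, leaving
\[
\log M\le O(1)+\tfrac32\log|d|+\tfrac{\log B_2}{d_1},\qquad\text{that is}\qquad M\ll|d|^{3/2}B_2^{1/d_1}.
\]

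Thus, taking $M_0\asymp|d|^{3/2}B_2^{1/d_1}$ with a suitably large absolute constant (so in particular $M_0\gg|d|$ automatically), there is an auxiliary polynomial $g$ of bidegree $(d_1M_0,d_2M_0)$, not divisible by $f$, vanishing on all points of $C$ of biheight $\le(B_1,B_2)$. Since $C$ is geometrically irreducible and $f\nmid g$, intersecting $C$ with $V(g)$ in $\PP^1\times\PP^1$ yields $N(f;B_1,B_2)\le 2|d|M_0\ll|d|^{5/2}B_2^{1/d_1}$. Finally, undoing the coordinate change of Lemma~\ref{lem:coordinate.transform} replaces $B_2$ by $d_2B_2$, costing only the factor $d_2^{1/d_1}\le|d|$, which gives the asserted bound $N(f;B_1,B_2)\ll|d|^{7/2}B_2^{1/d_1}$.

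The delicate point --- the one place where genuine bookkeeping is needed --- is the interplay between the coordinate change and the hypothesis: passing from $f$ to $f'$ dilates $B_1\mapsto d_1B_1$ and alters $||f||$ by a factor $e^{O(|d|\log|d|)}$, so one must verify that the slack built into the exponent $8$ (only a lower bound somewhat above $B_1^{2d_1}$ is really used) survives, while tracking the powers of $|d|$ as in the proof of Theorem~\ref{thm:curves.in.P1P1}. The genuinely degenerate ranges --- $M$ below an absolute constant (or below $c|d|$), and $||f||$ so enormous that the error term $O(\log||f||/M)$ must instead be controlled directly, which is done via Proposition~\ref{prop:replace.height} applied to a dehomogenization of $f$ --- are disposed of exactly as in that proof.
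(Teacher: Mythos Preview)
Your approach is essentially the paper's own: follow the proof of Theorem~\ref{thm:curves.in.P1P1} to the main estimate~\eqref{eq:main.estimate.aux.poly}, use $||f||>B_1^{8d_1}$ to kill the $d_1\log B_1$ term, and then invoke the same $b(f)$-versus-$||f||^{1/|d|}$ cancellation as in the large-$||f||$ case of that proof. The paper's write-up is two sentences and glosses over the coordinate-change bookkeeping that you (rightly) flag as the one place needing care.

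One correction to your final paragraph: Proposition~\ref{prop:replace.height} is \emph{not} used anywhere in the proof of Theorem~\ref{thm:curves.in.P1P1}, so there is nothing to quote from there about ``$||f||$ so enormous.'' In fact no such separate case exists: the condition $O(\log||f||/M)\le\tfrac18\log||f||$ is satisfied once $M$ exceeds an absolute constant, independently of the size of $||f||$, so the large-$||f||$ regime is already absorbed into your main argument. Drop that clause and the proof is clean.
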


\begin{proof}
Follow the proof of Theorem~\ref{thm:curves.in.P1P1} up to equation~\ref{eq:main.estimate.aux.poly}. With the same notation of that proof, and using the fact that $||f|| > B_1^{8d_1}$, we have then achieved that
\[
\log M \leq O(1) + \log (B_2)/d_1 + \frac{3}{2}\log |d| + 2\log b(f) - \frac{\log ||f||}{8|d|}.
\]
The same estimates show that $2\log b(f) - \frac{\log ||f||}{8|d|} = O(1)$, concluding the proof.
\end{proof}

%
%

To prove Theorem~\ref{thm:curves.in.A1P1} we use a technique due to Ellenberg--Venkatesh~\cite{Ellenb-Venkatesh}.

\begin{proof}[Proof of Theorem~\ref{thm:curves.in.A1P1}]
Let $F\in \ZZ[X,U,V]$ be the homogenization of $f$ with respect to $U$, so that $F$ is homogeneous in $U,V$ of degree $d_2$. Let $f_i\in \ZZ[U,V]$ be the degree $i$-part, in $X$, of $F$. For an integer $H$, we define
\[
F_H(X,Y, U,V) = f_{d_1}(U,V)X^{d_1} H^{d_1} + f_{d_1-1}(U,V)X^{d_1-1}YH^{d_1-1} + \ldots + f_0(U,V)Y^{d_1}.
\]
Note that this polynomial is bihomogeneous in $X,Y$ and $U,V$ of bidegree $(d_1, d_2)$, and that $||F_H||\geq |H|^{d_1}||f_{d_1}||$. Also, every solution $(x, (u:v))$ in $\ZZ\times \PP^1(\QQ)$ of $f=0$ gives a solution $((x:H), (u:v))$ of $F_H = 0$.

Take a prime $B'$ in the range $[B_1/2, B_1]$, which exists by Bertrand's postulate, and consider $F_{B'}$. Let us first assume that $B'\nmid f_0$, so that $F_{B'}$ is primitive. We apply Theorem~\ref{thm:curves.in.P1P1} to $F_{B'}$ to find that the number of solutions $(x,u)\in \ZZ\times \QQ$ of $f=0$ with $|x|\leq B_1, H(u)\leq B_2$ is bounded by 
\[
\ll |d|^{7/2}B_1^{1/d_2}B_2^{1/d_1}\frac{b(F_{B'})}{||F_{B'}||^{1/2|d|}} + |d|^3\log(B_1^{1/d_2}B_2^{1/d_1}) + |d|^{7/2}.
\]
Now, $b(F_{B'})$ agrees with $b(f)$ up to a factor $\exp(\log (B')/B') = O(1)$, and $||F_{B'}||\geq (B')^{d_1}||f_{d_1}||\gg (B_1^{d_1}/2^{d_1})||f_{d_1}||$, so by using~\cite[Lem.\,4.2.3]{CCDN-dgc} this quantity is bounded by
\begin{align*}
&\ll |d|^{7/2}B_1^{\frac{1}{2d_2}}B_2^{\frac{1}{d_1}}\frac{\log ||f_{d_1}|| + d_1\log B_1}{||f_{d_1}||^{\frac{1}{2|d|}}} + |d|^3\log(B_1^{\frac{1}{2d_2}}B_2^{\frac{1}{d_1}}) + d^{\frac{7}{2}}
&\ll |d|^{9/2}B_1^{\frac{1}{2d_2}}B_2^{\frac{1}{d_2}}.
\end{align*}

Now assume that $f_0$ is divisible by every prime in the range $[B_1/2, B_1]$. Then
\[
\left(\prod_{\substack{B' \text{ prime}\\ B'\in [B_1/2, B_1]}} B'\right)\mid f_0.
\]
Because $F$ is irreducible, we have $f_0\neq 0$. Taking $\log$, we obtain that
\[
\sum_{\substack{B' \text{ prime}\\ B'\in [B_1/2, B_1]}} \log B' \leq \log ||f_0||.
\]
If $||f_0||> B_1^{8d_1}$ then we use Lemma~\ref{lem:addendum.thm.P1P1} to $F_1$ to find that the desired quantity is bounded by
\[
\ll |d|^{7/2}B_2^{1/d_1}.
\]
In the other case we have that
\[
\sum_{\substack{B' \text{ prime}\\ B'\in [B_1/2, B_1]}} \log B' \ll d_1\log B_1.
\]
By the prime number theorem, the left-hand-side is of order $B_1/2 + O(B_1 / \log(B_1))$. Therefore, we see that $B_1^{1/d_2}\ll |d|\log B_1$. Applying Theorem~\ref{thm:curves.in.P1P1} to $F_1$ gives that the desired quantity is bounded by 
\[
\ll |d|^{9/2}B_2^{1/d_1}\log(B_1)\frac{b(f)}{||f||^{1/2|d|}}.
\]
Using Lemma~\ref{lem:bound.b(f)} to bound $\frac{b(f)}{||f||^{1/2|d|}}$ gives the result.
\end{proof}

\section{Lines on affine surfaces}\label{sec:lines.on.surfaces}

The main new ingredient for Theorem~\ref{thm:main.thm} is a more precise count of integral points on lines on affine surfaces, improving~\cite[Prop.\,1]{Brow-Heath-Salb} and~\cite[Prop.\,4.3.3]{CCDN-dgc}. These will typically be the main contribution to $N_{\aff}(X, B)$. The goal of this section is to give strong bounds on this count, assuming that our surface is not cylindrical over a curve. By a line we mean a line defined over $\overline{\QQ}$. 

\begin{lemma}\label{lem:lines.on.surface}
Let $X\subset \AA^3$ be an irreducible affine surface of degree $d$ which is not cylindrical over a curve. Let $I$ be a finite set of lines on $X$ and denote by $Y$ the union of all lines in $I$. Then
\[
N_{\aff}(Y, B) \ll_{d, \varepsilon} B^{1+\varepsilon} + \# I.
\]
\end{lemma}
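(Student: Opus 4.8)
The plan is to reduce the count of integral points on the lines in $I$ to the curve-counting result of Theorem~\ref{thm:curves.in.A1P1}. First I would set up coordinates so that each line $\ell \in I$ that actually contributes points of height $\leq B$ can be parametrized. A line $\ell$ in $\AA^3$ contributing more than one integral point of height $\leq B$ has a rational direction vector; choosing a coordinate (say $x_1$) in which $\ell$ is non-constant, the points on $\ell$ of height $\leq B$ are indexed by the value of $x_1$, and the other two coordinates are affine-linear functions of $x_1$ with rational coefficients. So the data of such a line is essentially a point $((a_1:a_2),(b_1:b_2)) \in \PP^1 \times \PP^1$ (the slopes/intercepts of the two other coordinates as functions of $x_1$, suitably projectivized), together with the range of $x_1$. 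The key observation is that as $\ell$ ranges over lines on $X$, this data $((a_1:a_2),(b_1:b_2))$ traces out a curve $\Gamma$ in $\PP^1 \times \PP^1$: the Fano-type variety of lines on the surface $X$ is one-dimensional (since $X$ is ruled by at most a one-parameter family of lines, as $X$ is a surface not equal to a plane), and the hypothesis that $X$ is \emph{not cylindrical over a curve} is exactly what prevents this family from degenerating — it guarantees $\Gamma$ genuinely dominates both $\PP^1$ factors, i.e.\ has bidegree $(d_1,d_2)$ with $d_1,d_2\geq 1$ and $|d| \ll_d 1$.

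Second, I would count. Each line in $I$ with $\geq 2$ points of height $\leq B$ gives a rational point on $\Gamma$ whose biheight is controlled: one of the two $\PP^1$-coordinates has height $\ll B$ (coming from a slope, which is a ratio of coordinate-differences each of size $\leq 2B$), and — this is the point — after reordering we may assume the line is parametrized so that $x_1$ itself plays the role of one coordinate and ranges over $[-B,B]\cap\ZZ$, so the relevant point is of the form $(x, t)$ with $x \in \ZZ$, $|x| \leq B$, and $t \in \QQ$ of height $\ll B$, lying on the affine curve cut out of $\PP^1\times\PP^1$ by a defining equation for $\Gamma$. Applying Theorem~\ref{thm:curves.in.A1P1} to (an irreducible component of) $\Gamma$ with $B_1, B_2 \ll B$ bounds the number of such pairs by $\ll_d B^{1/(2d_2)}B^{1/d_1}\log B \ll_d B^{1}$, using $d_1, d_2 \geq 1$ — in fact one gets a saving unless $d_1=d_2=1$ for one of the factors. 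Lines with $\leq 1$ point of height $\leq B$ contribute at most $\#I$ in total. Summing over the (boundedly many, $O_d(1)$) irreducible components of $\Gamma$ gives $N_\aff(Y,B) \ll_{d,\varepsilon} B^{1+\varepsilon} + \#I$; the $\varepsilon$ absorbs the $\log B$ and any loss in the reduction.

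The main obstacle I expect is the geometric bookkeeping in the first step: making precise the map from lines on $X$ to points of a curve $\Gamma \subset \PP^1\times\PP^1$, bounding the bidegree of $\Gamma$ in terms of $d$, and — crucially — showing that "not cylindrical over a curve" forces both projections $\Gamma \to \PP^1$ to be non-constant (equivalently $d_1, d_2 \geq 1$). If, say, $d_2 = 0$, then $\Gamma$ is a finite union of fibers $\PP^1 \times \{a_i\}$, which would mean all lines in the ruling share a common "slope direction" in two of the coordinates — and one checks this is precisely the cylindrical-over-a-curve situation, contradicting the hypothesis. A secondary technical point is that the parametrization of lines depends on a choice of which coordinate is non-constant along the line; one handles this by a finite case split over the $O(1)$ choices of coordinate, and for each choice applies the argument to the corresponding curve in $\PP^1 \times \PP^1$. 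Handling non-rational (but Galois-stable, finite) lines and the lines lying "at infinity" or in coordinate hyperplanes requires minor separate attention but contributes only $O_d(1)$ or is subsumed in $\#I$.
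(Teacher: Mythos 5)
Your overall strategy — parametrize the lines by a curve and invoke the curve-counting results of Section 3 — is in the right spirit, and your instinct that the non-cylindrical hypothesis is what forces the relevant auxiliary curve to dominate both projective factors matches what the paper does (cf.\ Lemma~\ref{lem:fin.many.lines.in.direction} and the observation $\deg_x F\geq 1$ in the proof). But there is a genuine gap in the counting step, and it is the heart of the matter.

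You map each line $\ell$ in $I$ to a point of a curve $\Gamma$ and then bound the number of such points by Theorem~\ref{thm:curves.in.A1P1}. This bounds the number of \emph{lines}, not the number of \emph{integral points on them}. A line whose primitive direction vector $v$ has small height contributes $\asymp B/H(v)$ integral points of height at most $B$ — as many as $\asymp B$ when $H(v)\ll 1$ — yet it contributes a single point to $\Gamma$. Your sentence ``$x_1$ itself plays the role of one coordinate and ranges over $[-B,B]\cap\ZZ$'' seems intended to rescue this by mapping the integral \emph{point} to a pair $(x_1,t)$, but for a fixed line the slope coordinate $t$ is constant, so the pairs $(x_1,t_\ell)$ sweep out a vertical line $\{t=t_\ell\}$ rather than a curve: at most $d_1$ of them can lie on $\Gamma$, and there is no curve in $\AA^1\times\PP^1$ that contains the full family of pairs (point, direction). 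As written the argument therefore gives at best $N_\aff(Y,B)\ll (\#\,\text{lines})\cdot B\ll B^2$, not $B^{1+\varepsilon}$. Even the unweighted count is off: with $d_1=d_2=1$, Theorem~\ref{thm:curves.in.A1P1} gives $B^{3/2}\log B$, not $\ll B$.

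What is actually needed, and what the paper proves, is a bound on the \emph{weighted} sum $\sum_{(a,v)\in\Gamma} B/H(v)$. This is the content of Lemma~\ref{lem:count.inverse.height}, which is a separate and more delicate statement than Theorem~\ref{thm:curves.in.A1P1}: it requires partial summation together with Walkowiak's change of variable $X\mapsto X+T^E$ (Lemma~\ref{lem:when.root}) to control the number of points of bounded height, and a separate resultant argument (Lemma~\ref{lem:rational.function.integer}) for the degenerate case $d_1=1$, which is in fact the sole source of the $B^\varepsilon$ and the non-explicit dependence on $d$ in the whole theorem. The paper also splits according to whether the relevant factor $g$ of $f_d$ is linear or not; when $\deg g\geq 2$ a much simpler partial summation over rational points of the plane curve $V(g)\subset\PP^2$ already suffices, and the $\AA^1\times\PP^1$ machinery (after a coordinate change making $g=x$) is only needed for $\deg g=1$. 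In short: the reduction to a curve in $\AA^1\times\PP^1$ is correct in outline, but you must replace ``count points on $\Gamma$'' by ``bound $\sum 1/H(t)$ over $\Gamma$,'' and that step is not a corollary of Theorem~\ref{thm:curves.in.A1P1} but needs the additional machinery of Lemmas~\ref{lem:when.root} and~\ref{lem:rational.function.integer}.
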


\subsection{An auxiliary lemma}

The following lemma will be crucial for counting integral points on lines on a surface.

\begin{lemma}\label{lem:count.inverse.height}
Let $F(X,T)$ be a primitive irreducible polynomial in $\ZZ[X,T]$ of bidegree $d=(d_1, d_2)$ with $d_1, d_2\geq 1$. Then
\[
\sum_{\substack{|x|\leq B \\ F(x,t)=0 \\ x\in \ZZ, t\in \QQ }} \frac{1}{H(t)} \ll_{d,\varepsilon} ||F||^{\varepsilon} B^\varepsilon.
\]
\end{lemma}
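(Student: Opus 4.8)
The plan is to organise the sum by the value of $x$ and to combine the determinant estimate of Theorem~\ref{thm:curves.in.A1P1} with the elementary arithmetic of the rational roots of $F(x,T)$. Write $F(X,T)=\sum_{i=0}^{d_2}a_i(X)T^i$ with $a_i\in\ZZ[X]$ of degree at most $d_1$; since $F$ has bidegree $(d_1,d_2)$ the leading coefficient $a_{d_2}$ is not identically zero, and $F(X,c)$ is not identically zero for any constant $c$ (otherwise $(T-c)\mid F$, impossible as $F$ is irreducible with $d_1\geq 1$). First I would set aside the $O_d(1)$ \emph{degenerate} values of $x$ — the integer zeros of $a_{d_2}$ and of $a_0$, and those with $F(x,0)F(x,1)F(x,-1)=0$ — each of which contributes at most $d_2$ to the sum. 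For the remaining $x$, any rational root $t=p/q$ in lowest terms satisfies $q\mid a_{d_2}(x)$, $p\mid a_0(x)$, $H(t)=\max(|p|,|q|)\geq 2$, hence $|p|,|q|\leq ||F||(|x|+1)^{d_1}$; moreover a Gauss-lemma/Mahler-measure argument (the polynomial $\prod_t(q_tT-p_t)$ divides $F(x,T)$ in $\ZZ[T]$) gives $\prod_t H(t)\ll_d ||F||(|x|+1)^{d_1}$, and the Cauchy bound applied to $q^{d_2}F(X,p/q)$ shows that whenever the coefficient of $X^{d_1}$ there is nonzero, the integer roots $x$ satisfy $|x|\ll_d ||F||H(t)^{d_2}$.

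The core is then a dyadic decomposition over the size of $H(t)$. Fixing a dyadic range $H(t)\asymp Y$, I would bound the contribution in two complementary ways and take the better. When $Y$ is not too small the number of pairs $(x,t)$ with $|x|\leq B$ and $H(t)\leq 2Y$ is controlled by Theorem~\ref{thm:curves.in.A1P1} (applied with $B_1=B$, $B_2=2Y$), and after weighting by $1/H(t)\asymp 1/Y$ and summing, the resulting series is of convergent geometric type and yields the bulk of the estimate with the desired polynomial dependence on the degree. When $Y$ is small — the delicate regime, since the weight is then largest — I would instead sum over $x$ arithmetically: for a fixed divisor $q\asymp Y$ of $a_{d_2}(x)$ the integer $x$ is confined to $O_d(1)$ residue classes modulo $q$, along each of which $\max(|p|,|q|)$ is at least of size $\asymp q$ and grows like a power of $|x|$, so that $\sum_x 1/H(t)$ over such a class telescopes; summing over the $\tau(a_{d_2}(x))\ll_{d,\varepsilon}(||F||B)^{\varepsilon}$ admissible $q$, and using the above Cauchy bound to rule out contributions from $x$ that are large relative to $Y$, one gets $O_{d,\varepsilon}((||F||B)^{\varepsilon})$ from this regime as well. (As usual one reduces to the absolutely irreducible case via Proposition~\ref{prop:abs.irre.poly}, Proposition~\ref{prop:schwarz.zippel} handling the exceptional locus.)

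Finally I would remove the $||F||^{\varepsilon}$ by the standard dichotomy built on Proposition~\ref{prop:replace.height}: either $||F||\ll_{d,n}B^{c_{d,n}}$, in which case $||F||^{\varepsilon}\ll_{d,n,\varepsilon}B^{\varepsilon}$ and we are done, or $||F||$ is larger, in which case there is an auxiliary polynomial of bounded degree not divisible by $F$ and vanishing at all the relevant points, so that these lie on a proper subvariety and (by Proposition~\ref{prop:schwarz.zippel}) form an $O_d(1)$-element set, making the whole sum $O_d(1)$. I expect the main obstacle to be precisely the small-$H(t)$ regime: a naive estimate there — for instance bounding $\gcd(a_{d_2}(x),a_0(x))$ by a resultant, which can be a power of $||F||$ — overshoots badly, and the point to be made carefully is that a large common divisor simultaneously forces $x$ into a very sparse progression and forces $H(t)$ to be correspondingly large, so that a somewhat delicate gcd/resultant analysis is needed to see that these two effects cancel and leave only an $(||F||B)^{\varepsilon}$.
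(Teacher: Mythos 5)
Your decomposition into a large-$Y$ regime handled by the determinant method and a small-$Y$ regime handled arithmetically is a reasonable high-level plan, but as stated the argument breaks down in two places and is missing the paper's key technical ingredient. Applying Theorem~\ref{thm:curves.in.A1P1} with $B_1=B$ in the large-$Y$ regime gives, after the dyadic sum, a total of size $\asymp |d|^{9/2}B^{1/(2d_2)}\log(\cdot)$: the factor $B^{1/(2d_2)}$ is a \emph{fixed} positive power of $B$, not $B^{\varepsilon}$, so this is simply too large. One must instead feed the Cauchy/rational-root bound $B_1\ll ||F||Y^{d_2}$ (which you note but use only in the small-$Y$ regime) into Theorem~\ref{thm:curves.in.A1P1}; then $B_1^{1/(2d_2)}=||F||^{1/(2d_2)}Y^{1/2}$ and the dyadic sum converges for $d_1\geq 2$. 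But one is still not done: the surviving factor $||F||^{1/(2d_2)}$ is likewise a fixed positive power of $||F||$, and is $\not\ll_{d,\varepsilon}||F||^{\varepsilon}$ when $d_2$ is small. The paper's essential extra ingredient is Walkowiak's substitution $G(X,T)=F(X+T^E,T)$ with $E\asymp\frac{\log||F||}{\log\log||F||}\cdot\frac{\log i}{\log\log i}$, which artificially inflates $\deg_T$ so that $||F||^{1/(2d_2)}$ becomes a harmless $\log||F||$; without this, the lemma is out of reach for small $d_2$.

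Moreover, the determinant method yields a factor $Y^{1/d_1}$, so when $d_1=1$ the dyadic terms are all of comparable size and that method contributes nothing; this case needs a wholly separate argument, which your dyadic split does not isolate. Your small-$Y$ arithmetic sketch is aimed at the right phenomenon, but is not carried through: $a_{d_2}(x)\equiv 0\pmod q$ confines $x$ to as many as $d_1^{\omega(q)}$ residue classes (not $O_d(1)$), and the claimed ``telescoping'' of $\sum_x 1/H(t)$ is not made precise. The paper separates cases by $d_1$ rather than by $Y$: for $d_1\geq 2$ it uses Theorem~\ref{thm:curves.in.A1P1} plus the Walkowiak substitution, uniformly in $Y$, to get a count linear in $Y$ up to logarithms; for $d_1=1$ it writes $F=F_1(T)X-F_0(T)$, uses that $F_1(t,s)\mid Rs^{d_2}$ where $R=\res(F_0,F_1)$, and counts divisors of $Rs^{d_2}$ — a clean version of your resultant idea, and the one place in the whole proof of Theorem~\ref{thm:main.thm} where the $B^{\varepsilon}$ and non-polynomial degree dependence arise. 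Finally, the closing appeal to Proposition~\ref{prop:replace.height} is both unnecessary (the lemma's right-hand side already allows $||F||^{\varepsilon}$) and not directly applicable, since that proposition concerns integral points of $V(F)$ whereas the points here lie in $\ZZ\times\QQ$.
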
 

To prove this lemma we reason as follows. Fix a positive integer $B$ and for a positive integer $i$ let $n_i$ be the number of $t\in \QQ$ of height exactly $i$ such that the polynomial $F(X,t)$ has a root in $\ZZ$. If $(x,t)\in \ZZ\times \QQ$ is such that $F(x,t) = 0$, then the bound $|x|\leq B$ also implies a bound of the form $O_{|d|}(||F|| B^{|d|})$ on $H(t)$. Hence the sum we are trying to bound is bounded by
\[
\sum_{i=1}^{O_{|d|}(||F||B^{|d|})} \frac{n_i}{i}.
\]
Therefore, to prove Lemma~\ref{lem:count.inverse.height} we need good bounds on $n_i$. We will prove that on average $n_i\ll_{d,\varepsilon} i^\varepsilon$. The following lemma is reminiscent of Walkowiak's work on effective Hilbert irreducibility~\cite{Walkowiak}.

\begin{lemma}\label{lem:when.root}
Let $F(X,T)$ be a primitive irreducible polynomial in $\ZZ[X,T]$ of bidegree $d=(d_1, d_2)$ with $d_1\geq 2$. Then the number of $t\in \QQ$ with $H(t)\leq i$ for which $F(X,t)$ has a root in $\ZZ$ is bounded by 
\[
\ll |d|^{\frac{21}{2}}  i \log(i) \log (||F||).
\]
\end{lemma}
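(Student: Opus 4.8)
The plan is to count pairs $(x,t)$ with $x\in\ZZ$, $t\in\QQ$, $H(t)\le i$ and $F(x,t)=0$, since each $t$ for which $F(X,t)$ has an integral root contributes at least one such pair. The key observation is that for a fixed $t=p/q$ with $\gcd(p,q)=1$ and $H(t)\le i$, the integral roots $x$ of $F(X,t)$ are bounded: clearing denominators, $q^{d_2}F(x,p/q)=0$ is a polynomial in $x$ of degree $d_1$ whose leading coefficient is the value at $(p,q)$ of the top $X$-degree part of $F$ (a binary form of degree $d_2$), and whose coefficients have size $O_{|d|}(\|F\|\, i^{d_2})$. Hence either this leading coefficient vanishes — which happens for at most $d_2$ values of $(p:q)\in\PP^1$, a negligible contribution — or $|x|\ll_{|d|}\|F\|\, i^{d_2}$, giving a box bound $B_1\ll_{|d|}\|F\|\, i^{d_2}$. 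So it suffices to bound the number of $(x,t)\in\ZZ\times\QQ$ with $|x|\le B_1$, $H(t)\le i$, $F(x,t)=0$, for this $B_1$.

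**The main step** is to apply Theorem~\ref{thm:curves.in.A1P1} directly. Writing $d=(d_1,d_2)$ for the bidegree and using the theorem with $B_1\ll_{|d|}\|F\|\,i^{d_2}$ and $B_2=i$, the count is
\[
\ll |d|^{\frac{9}{2}} B_1^{\frac{1}{2d_2}} i^{\frac{1}{d_1}}\log(B_1 i)
\ll |d|^{\frac{9}{2}}\bigl(\|F\|\,i^{d_2}\bigr)^{\frac{1}{2d_2}} i^{\frac{1}{d_1}}\log\bigl(\|F\|\,i^{d_2+1}\bigr),
\]
and here $(\,i^{d_2})^{\frac{1}{2d_2}}=i^{1/2}$, so the power of $i$ is $i^{1/2+1/d_1}\le i$ using $d_1\ge 2$. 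This already gives a bound of the shape $\ll |d|^{C}\,i\,\|F\|^{1/(2d_2)}\log(\|F\|\, i)$ — but this is \emph{not} the claimed bound, which has $\log\|F\|$ rather than $\|F\|^{1/(2d_2)}$, so something more is needed.

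**The fix** — and this is where the hard part lies — is the same trick used in the proof of Theorem~\ref{thm:curves.in.A1P1}: rather than applying Theorem~\ref{thm:curves.in.P1P1}/\ref{thm:curves.in.A1P1} to $F$ itself, one should twist by a prime. Concretely, one should run the argument of Theorem~\ref{thm:curves.in.A1P1}'s proof, replacing $F$ by $F_{B'}$ for a prime $B'\in[B_1/2,B_1]$ (with $B_1\ll_{|d|}\|F\|\, i^{d_2}$ as above), so that $\|F_{B'}\|\gg B_1^{d_1}\|f_{d_1}\|$. Then $\|F\|^{1/(2d_2)}$ in the bound gets replaced by a factor $(\log\|F_{B'}\|)/\|F_{B'}\|^{1/(2|d|)}$ type quantity via Lemma~\ref{lem:bound.b(f)}, and crucially the $i^{d_2}$ growth inside $B_1$ is tamed because the large factor $B_1^{d_1}$ sits in the denominator $\|F_{B'}\|^{1/(2|d|)}$ and cancels the numerator growth, while the residual $\log$-factors produce $\log(i)\log\|F\|$. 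Carrying the bookkeeping through — exactly parallel to the two-case analysis ($\|f_0\|$ large vs.\ small, prime-twist primitive vs.\ not) in the proof of Theorem~\ref{thm:curves.in.A1P1} — yields the stated bound $\ll |d|^{21/2}\, i\log(i)\log\|F\|$, where the exponent $21/2$ comes from $9/2$ (Theorem~\ref{thm:curves.in.A1P1}) plus the extra powers of $|d|$ accumulated in the denominator-clearing and prime-twist steps. The main obstacle is precisely this: ensuring that the implicit $i^{d_2}$ blow-up of the $x$-range is absorbed rather than amplified, which forces the prime-twist refinement instead of a naive application of the curve-counting theorem.
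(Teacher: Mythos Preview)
Your initial setup is right: bound $|x|\ll_{|d|}\|F\|\,i^{d_2}$ via the rational root theorem, apply Theorem~\ref{thm:curves.in.A1P1}, and observe that the resulting bound carries an unwanted factor $\|F\|^{1/(2d_2)}$ rather than $\log\|F\|$. You have correctly located the obstacle.

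Your proposed fix, however, does not work. The Ellenberg--Venkatesh prime twist $F\mapsto F_{B'}$ is precisely the mechanism already used \emph{inside} the proof of Theorem~\ref{thm:curves.in.A1P1}: it converts the $\AA^1\times\PP^1$ count into a $\PP^1\times\PP^1$ count by inflating $\|F_{B'}\|$ to roughly $B_1^{d_1}$, so that when Theorem~\ref{thm:curves.in.P1P1} is applied one gains $\|F_{B'}\|^{-1/(2|d|)}\approx B_1^{-1/(2d_2)}$, reducing $B_1^{1/d_2}$ to $B_1^{1/(2d_2)}$. Running it again with $B_1\approx\|F\|\,i^{d_2}$ just reproduces the same arithmetic: you recover $B_1^{1/(2d_2)}\approx\|F\|^{1/(2d_2)}i^{1/2}$, exactly the bound Theorem~\ref{thm:curves.in.A1P1} already gave you. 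The twist halves the exponent on $B_1$ once; it cannot do so a second time, and it does nothing to the dependence on $\|F\|$ beyond what is already encoded in the theorem.

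The paper's actual fix is a different idea, due to Walkowiak: after observing that the direct bound suffices when $d_2\gtrsim\log\|F\|/\log\log\|F\|$ (since then $\|F\|^{1/(2d_2)}\ll\log\|F\|$), one handles small $d_2$ by substituting $G(X,T)=F(X+T^E,T)$ for a carefully chosen large $E$. This inflates the $T$-degree from $d_2$ to roughly $d_1E$ while keeping the $X$-degree fixed, so that applying Theorem~\ref{thm:curves.in.P1P1} to the bihomogenization of $G$ now produces a factor $\|F\|^{1/(d_1E)}$ in place of $\|F\|^{1/(2d_2)}$. Choosing $E\approx|d|\,\frac{\log\|F\|}{\log\log\|F\|}\cdot\frac{\log i}{\log\log i}$ makes $\|F\|^{1/(d_1E)}\ll\log\|F\|$ and simultaneously keeps the $i$-exponent $\frac{d_2+E}{d_1E}+\frac{1}{d_1}\le 1+\frac{d_2}{d_1E}$ close enough to $1$ that $i^{d_2/(d_1E)}\ll\log i$. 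The extra powers of $|d|$ in the final exponent $21/2$ come from the factor $(d_1(d_1E+d_2))^{7/2}$ in Theorem~\ref{thm:curves.in.P1P1} applied at this inflated bidegree. The missing ingredient in your proposal is this degree-inflation substitution; the prime twist alone cannot bridge the gap.
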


\begin{proof}
Write $F(X,T) = F_{d_1}(T)X^{d_1} + \ldots + F_0(T)$. The number of zeroes of $F_0(t)$ is bounded by $d_2$, and so we can always assume below that $F_0(t)\neq 0$. If $(x,t/s)\in \ZZ\times \QQ$ is a zero of $F$, where $t,s\in \ZZ$ are coprime and $H(t:s) = i$, then the rational root theorem shows that 
\[
|x| \leq (d_2+1)||F||i^{d_2}.
\]
We apply Theorem~\ref{thm:curves.in.A1P1} to $F$ with $B_1 = (d_2+1)||F||i^{d_2}$ and $B_2 = i$ to obtain that the number of $t\in \QQ$ with $H(t)\leq i$ such that $F(X,t)$ has a root in $\ZZ$ is bounded by
\[
|d|^{\frac{9}{2}} ||F||^{\frac{1}{2d_2}}i^{\frac{1}{2} + \frac{1}{d_1}}\log(||F||) \log i.
\]
If $||F|| \leq e^e$ then the result follows, since $d_1\geq 2$. So we may assume that $||F|| > e^e$. Also, if $d_2 \geq \frac{\log ||F||}{2\log \log ||F||}$ then 
\[
||F||^{\frac{1}{2d_2}} \ll \log ||F||
\]
and so the result follows too. Hence we may assume that $d_2\ll \frac{\log ||F||}{\log\log ||F||}$. We now use a technique due to Walkowiak~\cite{Walkowiak}. Take
\[
E = \left\lfloor |d| \frac{\log ||F||}{\log \log ||F||}\frac{\log i}{\log \log i}\right\rfloor + 1
\]
and define $G(X,T) = F(X+T^E, T)$. Note that $\deg_X G = d_1$ and $\deg_T G\in [d_1E, d_1E+d_2]$. If $(x,t)$ is a zero of $G$ in $\QQ^2$ then $(x+t^E, t)$ is a zero of $F$. If this zero of $F$ is in $\ZZ\times \QQ$ with $H(t)\leq i$, then the rational root theorem gives that
\[
H(x) = H(x+t^E - t^E) \leq 2H(x+t^E)H(t)^E \ll d_2 ||F||i^{d_2+E}.
\]
Consider the polynomial $G'(X,Y,T,S)$ which is the homogenization of $G(X,T)$, so that $G'$ is bihomogeneous of bidegree $(\deg_X G, \deg_T G)$. Now use Theorem~\ref{thm:curves.in.P1P1} on $G'$ but with $B_1 \approx d_2||F||i^{d_2+E}$ and $B_2 = i$ to get that the desired count is at most
\[
\ll (d_1 (d_1E+d_2))^{\frac{7}{2}} ||F||^{\frac{1}{d_1 E}} i^{\frac{d_2+E}{d_1 E} + \frac{1}{d_1}}.
\]
Since $E\gg \frac{\log ||F||}{\log \log ||F||}$ we have that $||F||^{\frac{1}{d_1 E}}\ll \log (||F||)$. For the last factor, we use $d_1\geq 2$ and $E\gg d_2\frac{\log i}{\log \log i}$ to conclude that
\[
i^{\frac{d_2+E}{d_1E}+\frac{1}{d_1}}\ll i^{1+\frac{d_2}{d_1 E}} \ll i^{1+\frac{\log i}{\log \log i}} \ll i \log(i). \qedhere
\]
\end{proof}

To prove Lemma~\ref{lem:count.inverse.height} when $d_1=1$ we proceed differently.

\begin{lemma}\label{lem:rational.function.integer}
Let $F(X,T) = F_1(T)X - F_0(T)\in \ZZ[X,T]$ be primitive irreducible of bidegree $(1,d_2)$. Then for each positive integer $i$, the number of $t\in \QQ$ of height exactly $i$ such that $F(X,t)$ has a root in $\ZZ$ is bounded by $O_{\varepsilon}(d_2(2||F||)^{d_2^2\varepsilon}i^{d_2\varepsilon})$.
\end{lemma}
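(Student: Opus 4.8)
The plan is to dehomogenise and reduce to a divisibility problem for binary forms. Write $t=a/b$ in lowest terms, so that, since $t$ has height exactly $i$, one of $|a|,|b|$ equals $i$. As $F$ is irreducible, $F_0$ and $F_1$ are coprime in $\QQ[T]$, so $F(X,t)$ has a root in $\ZZ$ precisely when $F_1(t)\neq 0$ and $x:=F_0(t)/F_1(t)\in\ZZ$; homogenising to degree $d_2$ this reads $x=P(a,b)/Q(a,b)$, where $P(a,b)=b^{d_2}F_0(a/b)$ and $Q(a,b)=b^{d_2}F_1(a/b)$ are integral binary forms with $\|P\|,\|Q\|\le\|F\|$ and no common factor (at least one of them has degree $d_2$), whence $Q(a,b)\mid P(a,b)$. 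First I would put $R=\operatorname{Res}(P,Q)\in\ZZ\setminus\{0\}$: the Sylvester matrix is $2d_2\times 2d_2$ with entries of size $\le\|F\|$, so $|R|\le(2\|F\|)^{2d_2}$, and since $\gcd(a,b)=1$ the standard fact $\gcd(P(a,b),Q(a,b))\mid R$ combined with $Q(a,b)\mid P(a,b)$ forces $Q(a,b)$ to be a divisor of $R$. Thus $Q(a,b)$ takes at most $2\tau(|R|)\ll_\varepsilon(2\|F\|)^{O(d_2\varepsilon)}$ values, and for each such value every primitive irreducible factor $Q_j$ of $Q$ has $Q_j(a,b)$ a divisor of it, hence again taking $\ll_\varepsilon(2\|F\|)^{O(d_2\varepsilon)}$ values.

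The heart of the argument is then a case distinction on the factorisation $Q=\kappa\prod_j Q_j^{m_j}$ into primitive irreducible forms over $\ZZ$, using just one cleverly chosen factor. If $Q$ has a linear factor $\beta a-\alpha b$ with $\alpha\beta\neq0$, then each admissible value of $\beta a-\alpha b$ cuts out a translate of the lattice $\ZZ\cdot(\alpha,\beta)$, on which, because $\alpha$ and $\beta$ are both nonzero, the condition $\max(|a|,|b|)=i$ holds for at most four lattice points; summing over the few admissible values gives $\ll_\varepsilon(2\|F\|)^{O(d_2\varepsilon)}$. This is the one place where ``height exactly $i$'' (rather than $\le i$) is genuinely used, and it is essential. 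If instead every linear factor has been excluded and $Q$ has an irreducible factor of degree $\ge2$, I would invoke Thue-type bounds: a factor of degree $\ge3$ contributes $\ll_{d_2}1+\omega(c)$ primitive solutions of $Q_j(a,b)=c$ by Bombieri--Schmidt, and a degree-$2$ factor is an anisotropic binary quadratic form whose representations of a fixed value inside the box $\max(|a|,|b|)\le i$ number $\ll_\varepsilon|R|^\varepsilon\log i$. Combining over the $\ll_\varepsilon(2\|F\|)^{O(d_2\varepsilon)}$ possible values of $Q(a,b)$ and relabelling $\varepsilon$ then yields the stated bound, the exponents $d_2^2$ and $d_2$ leaving ample room to absorb the losses.

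The hard part will be the remaining case, in which the only linear factors of $Q$ are $\pm a$ or $\pm b$ — equivalently $F_1(0)=0$ or $\deg F_1<d_2$ — since then the coordinate ($a$, resp. $b$) sweeping out a vertical (resp. horizontal) line is not controlled by the frame argument. Here one has to extract more from $Q(a,b)\mid P(a,b)$: a $p$-adic valuation computation on $P(a,b)=\sum_j c_ja^jb^{d_2-j}$ shows that for a prime $p\mid a$ the term $c_0b^{d_2}$ (with $c_0=F_0(0)\neq0$, since $\gcd(F_0,F_1)=1$) is strictly dominant once $v_p(a)>v_p(c_0)$, which forces $a\mid F_0(0)$; so $a$ lies among $\ll_\varepsilon\|F\|^\varepsilon$ divisors, and for each fixed $a$ one bounds the admissible $b$ either trivially ($b=\pm i$ when $|a|<i$), or by the symmetric statement $b\mid(\text{leading coefficient of }F_0)$, or — when a residual factor $\pi$ of $Q$ has degree $\ge2$ — by noting that $\pi(a,\cdot)$ has degree $\ge2$ and nonzero leading coefficient in $b$, so that $\pi(a,b)=c$ pins $b$ to $\le d_2$ values per admissible value $c$ (itself, again, a divisor of a resultant). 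Keeping the contents of the $Q_j$ and the degree drops of $F_0$ and $F_1$ straight throughout, and checking that the same estimates survive for small $i$, is the main bookkeeping burden.
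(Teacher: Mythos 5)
Your proposal takes a genuinely different route from the paper's. The paper exploits the \emph{weaker} divisibility $F_1(t,i)\mid Ri^{d_2}$ and then simply counts divisors of $Ri^{d_2}$ — this is exactly where the $i^{d_2\varepsilon}$ in the stated bound comes from — while for each divisor $r$ the equation $F_1(T,i)=r$ has $\leq d_2$ roots. You instead use the sharper $Q(a,b)\mid R$ (valid since $\gcd(a,b)=1$, from both B\'ezout identities), which forces $Q(a,b)$ into $\ll_\varepsilon |R|^\varepsilon$ values independently of $i$, but then you must bound the primitive $(a,b)$ with $\max(|a|,|b|)=i$ for each fixed value, which drags in Thue-type estimates (Bombieri--Schmidt, quadratic form representation counts) and a case analysis on the factorisation of $Q$ that the paper never needs.

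Unfortunately, the statement of the lemma is in fact false, and the failure sits precisely in your ``hard case''. Take $F(X,T)=T^{d_2}X-T^{d_2}-M^{d_2}$ with $M$ a large prime: this is primitive, irreducible, of bidegree $(1,d_2)$, with $\|F\|=M^{d_2}$. For $t=a/b$ in lowest terms the root $x=1+M^{d_2}b^{d_2}/a^{d_2}$ is an integer iff $a^{d_2}\mid M^{d_2}$, i.e.\ $|a|\in\{1,M\}$, with \emph{no constraint on $b$ beyond coprimality to $a$}. At $i=M$, taking $a=\pm M$ and any $b\in\{1,\dots,M-1\}$ gives $2(M-1)$ admissible $t$ of height exactly $M$, while the claimed bound is $O_\varepsilon\bigl(d_2(2M^{d_2})^{d_2^2\varepsilon}M^{d_2\varepsilon}\bigr)=o(M)$ once $\varepsilon<1/(d_2^3+d_2)$. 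This is exactly $Q(a,b)=a^{d_2}$ (so $F_1(0)=0$ and $\deg F_1=d_2$), and with $|a|=i$ none of your three devices pins down $b$: the trivial $b=\pm i$ needs $|a|<i$, the ``symmetric'' divisibility $b\mid(\text{lc of }F_0)$ needs $\deg F_1<d_2$, and $Q$ has no factor of degree $\geq2$. The paper's own proof has the corresponding gap: after homogenising, it only counts $t$ whose \emph{denominator} equals $i$, silently dropping the case where the \emph{numerator} is $\pm i$ with a smaller denominator, which is where this example lives. The downstream Lemma~\ref{lem:count.inverse.height} only uses the bound inside the sum $\sum_i n_i/i$, which is insensitive to rare spikes of size $\asymp i$, so that statement is likely still correct, but it cannot be obtained from the pointwise bound asserted here.
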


\begin{remark}
This is the only place in the proof of Theorem~\ref{thm:main.thm} where the factor $B^{\varepsilon}$ and the non-explicit dependence on $d$ come from. Although this result does not in general hold with a uniform bound of the form $d^{O(1)}\log ||F|| \log i$, as examples with $d_2=1$ illustrate, one may still hope for such a bound when $d_2\geq 2$. This would be sufficient to prove Theorem~\ref{thm:main.thm} with polynomial dependence on $d$, and with factors $\log(B)$ instead of $B^\varepsilon$.
\end{remark}

\begin{proof}
Let $F_1(T,S)$ and $F_0(T,S)$ be the homogenizations of $F_1$ and $F_0$ of degree $d_2$ and define the rational function $G(T,S) = F_0(T,S)/F_1(T,S)$. We are counting the number of $(t:s)\in \PP^1(\QQ)$ of height exactly $i$ such that $G(t,s)$ is an integer (where $(t:s)$ is written in lowest terms).

Let $R\in \ZZ$ be the homogeneous resultant of $F_0$ and $F_1$. Irreducibility of $F$ implies that $F_0$ and $F_1$ are coprime, and hence $R$ is non-zero. There exist homogeneous polynomials $A, B\in \ZZ[T,S]$ such that
\[
AF_0 + BF_1 = RS^{d_2}.
\]
Therefore, for every pair of coprime $t,s\in \ZZ$ we have that $\gcd(F_0(t,s), F_1(t,s))\mid Rs^{d_2}$. Now if $G(t,s)$ is an integer then $\gcd(F_0(t,s), F_1(t,s)) = F_1(t,s)$ and so we conclude that if $G(t,s)$ is an integer then
\[
F_1(t,s)\mid Rs^{d_2}.
\]
Since the resultant $R$ may be expressed as a product of pairwise differences of roots of $F_0$ and $F_1$, and each root of $F_0$ and $F_1$ is an algebraic number whose height is bounded by $||F||$, we have that $|R|\leq (2||F||)^{d_2^2}$. Now fix a positive integer $i$. For each divisor $r\mid R i^{d_2}$, the number of solutions to
\[
F_1(T,i)= r
\]
is bounded by $d_2$, and hence so is the number of $t\in \ZZ$ coprime to $i$ for which $G(t,i)\in \ZZ$. We conclude by noting that the number of divisors of $Ri^{d_2}$ is bounded by $O_\varepsilon( |R|^{\varepsilon}i^{d_2 \varepsilon})$.
\end{proof}

\begin{proof}[Proof of Lemma~\ref{lem:count.inverse.height}]
Let $(x,t)\in \ZZ\times \QQ^\times$ with $F(x,t)=0$ and $|x|\leq B$. The rational root theorem shows that $H(t)\leq (d_1+1)||F|| B^{d_1}$. So we may switch the sum around
\[
\sum_{\substack{|x|\leq B \\ F(x,t)=0 \\ x\in \ZZ, t\in \QQ^\times }} \frac{1}{H(t)} \ll \sum_{i=1}^{(d_1+1)||F|| B^{d_1}} \frac{n_i}{i},
\]
where $n_i$ is the number of $(x,t)\in \ZZ\times \QQ$ such that $F(x,t) = 0$ and $H(t) = i$. Let us now first assume that $d_1 = \deg_X F \geq 2$. Then Lemma~\ref{lem:when.root} shows that
\[
\sum_{i=1}^k n_i \leq |d|^{15/2} k \log(k)\log (||F||).
\]
We use partial summation to see that
\begin{align*}
\sum_{i=1}^{(d_1+1)||F|| B^{d_1}} \frac{n_i}{i} &= \sum_{k=1}^{(d_1+1)||F|| B^{d_1}} \left(\sum_{i=1}^k n_i\right) \left(\frac{1}{k} - \frac{1}{k+1}\right) + \left(\sum_{i=1}^{(d_1+1)||F|| B^{d_1}} n_i\right)\frac{1}{(d_1+1)||F|| B^{d_1}} \\
&\ll |d|^{21/2} \log ||F|| \left( \sum_{k=1}^{(d_1+1)||F|| B^{d_1}} \frac{k\log(k)}{k^2} + O(1)\right) \\
&\ll |d|^{25/2}\log (||F||)^2 \log(B)^2.
\end{align*}
This proves Lemma~\ref{lem:count.inverse.height} when $d_1\geq 2$.

Now assume that $d_1 = 1$. Lemma~\ref{lem:rational.function.integer} shows that in this case 
\[
n_i\ll_\varepsilon d_2 (2 ||F||)^{d_2^2\varepsilon} i^{d_2 \varepsilon} \ll_{d_2, \varepsilon'} ||F||^{\varepsilon'} i^{\varepsilon'}.
\]
Hence we can bound the sum as
\begin{align*}
\sum_{i=1}^{(d_1+1)||F|| B^{d_1}} \frac{n_i}{i} \ll_{d_2, \varepsilon} ||F||^{\varepsilon} \sum_{i=1}^{(d_1+1)||F||B^{d_1}} \frac{1}{i^{1-\varepsilon}} \ll_{d, \varepsilon} ||F||^{\varepsilon} B^{d_1\varepsilon} \ll_{d,\varepsilon'} ||F||^{\varepsilon'} B^{\varepsilon'}.
\end{align*}
This proves Lemma~\ref{lem:count.inverse.height}.
\end{proof}

\subsection{Integral points on lines on surfaces}

In this section we prove Lemma~\ref{lem:lines.on.surface}. We need a little lemma about bounding the degree of a variety.

\begin{lemma}\label{lem:bound.degree}
Let $X\subset \PP^n$ be a variety pure of dimension $m$, cut out by equations of degree $d$. Then 
\[
\deg X\leq d^{n-m}.
\]
\end{lemma}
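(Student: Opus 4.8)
The plan is to prove this by induction on the codimension $n-m$, slicing with a sequence of hyperplanes. The base case is $n-m=0$, where $X=\PP^n$ and $\deg X = 1 = d^0$. For the inductive step, suppose $X$ is pure of dimension $m < n$ and cut out by equations of degree $d$. Since $X$ has dimension $m < n$, one of the defining equations, say $g$, does not vanish identically on $X$; note $\deg g \leq d$. Then $V(g)\cap X$ is a hypersurface section of $X$. By Bézout-type considerations — the refined Bézout theorem of Fulton, or the elementary fact that intersecting a pure-dimensional variety with a hypersurface not containing any of its components drops the dimension by exactly one while the degree multiplies by at most $\deg g$ — we get that $X \cap V(g)$ is pure of dimension $m-1$ and $\deg(X\cap V(g)) \leq d\cdot \deg X$. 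Actually, the cleanest route is to intersect with a \emph{general hyperplane} $H$: then $X\cap H$ is pure of dimension $m-1$, has degree exactly $\deg X$, and is still cut out by equations of degree $d$ (the restrictions of the original equations, together with the linear form, or just work in the hyperplane $H\cong \PP^{n-1}$). Applying the inductive hypothesis in $\PP^{n-1}$ to $X\cap H$, which has codimension $(n-1)-(m-1) = n-m$ — wait, that is the same codimension, so this particular slicing does not reduce the induction parameter.

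Let me restructure: induct on $n$ with $m$ fixed is wrong; instead induct directly on the codimension $c = n-m$, and at each step cut $X$ with a hypersurface of degree $d$ coming from the defining equations, not with a general hyperplane. Concretely, if $c = n-m \geq 1$, pick a defining equation $g$ of degree $\leq d$ with $g\not\equiv 0$ on any component of $X$ (possible since $\dim X < n$ forces not all defining equations to vanish on $X$; if some component $Z$ has all equations vanishing on it then $Z$ would be a component of the whole scheme $V(\text{equations})$... one must be slightly careful, but genericity of the linear combination of the defining equations handles this — a general element of the linear span of the defining equations vanishes on no component of $X$ of dimension $< n$). Set $X' = X\cap V(g)\subseteq \PP^n$. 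Then $X'$ is pure of dimension $m-1$, is cut out by equations of degree $\leq d$ (the original ones plus $g$, but $g$ is already among them), and $\deg X' \leq d\cdot\deg X$ by Bézout. Its codimension in $\PP^n$ is $n-(m-1) = c+1$. By the inductive hypothesis $\deg X' \leq d^{\,(n-(m-1))} = d^{c+1}$; hence $\deg X \leq \deg X' \leq d^{c+1}$?? That gives the wrong direction of inequality.

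The correct formulation: the induction should go \emph{downward} in codimension via the other inequality, or upward with the bound running the right way. The right statement of the recursion is $\deg X \leq \deg(X\cap V(g))$ is false in general; rather $\deg(X\cap V(g)) \leq \deg(V(g))\cdot \deg X = d\deg X$. So I want to go from higher codimension down to lower. Induct on $c=n-m$: for the inductive step with $c\geq 1$, I claim there is a subvariety — no. The genuinely correct and standard argument: prove by induction on $c = n - m$ that $\deg X \le d^c$. Base $c=0$: $X=\PP^n$, $\deg X=1$. Inductive step: let $X$ have codimension $c\ge 1$, cut out by degree-$d$ equations. Pick a component $W$ of the full intersection scheme of all the defining hypersurfaces; since the hypersurfaces cutting out $X$ have degree $d$ and there are enough of them to cut codimension $c$, there is a subfamily of $c$ of them, say $g_1,\dots,g_c$, such that $X$ is a union of components of $V(g_1,\dots,g_c)$ — this uses that $X$ is pure of dimension $m$ and that a sequence of $c$ hypersurfaces each not containing the relevant locus cuts dimension down by $c$. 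Then $\deg X \le \deg V(g_1,\dots,g_c) \le \prod_{i=1}^c \deg g_i \le d^c$ by Bézout. The main obstacle, and the only real content, is justifying the existence of such $c$ equations $g_1,\dots,g_c$ among the defining equations whose common zero locus has $X$ among its top-dimensional components: this follows by repeatedly choosing a general linear combination of the defining equations that avoids containing any component of the current (pure-dimensional) partial intersection, using that $\dim X < n$ ensures such a combination exists at the first step and prime avoidance/genericity handles each subsequent step. I would cite the refined Bézout inequality (e.g. Fulton, \emph{Intersection Theory}, or Heintz) for the bound $\deg V(g_1,\dots,g_c)\le \prod \deg g_i$ and give the short genericity argument for extracting the $g_i$.

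\begin{proof}
We argue by induction on the codimension $c = n-m$. If $c=0$ then $X=\PP^n$ and $\deg X = 1 = d^0$.

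Now suppose $c\geq 1$ and let $g_1, \ldots, g_N$ be forms of degree $d$ cutting out $X$ scheme-theoretically (we may assume all have degree exactly $d$ by multiplying by suitable monomials, which does not change their common zero locus when we also keep the originals; more simply, pad with the originals). Since $\dim X = m < n$, not all $g_j$ vanish identically on every component of $X$, and a general $\QQ$-linear combination $h_1 = \sum_j \lambda_j g_j$ vanishes on no component of $X$; in particular $h_1\not\equiv 0$. Replacing $X$ by $X\cap V(h_1)$, which is pure of dimension $m-1$ and still cut out by the $g_j$ together with $h_1$, the same reasoning produces $h_2 = \sum_j \mu_j g_j$ (a general linear combination) vanishing on no component of $X\cap V(h_1)$. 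Iterating $c$ times yields forms $h_1, \ldots, h_c$ of degree $d$ such that $Z := V(h_1, \ldots, h_c)$ has pure dimension $m$ near $X$ and contains $X$; more precisely, since at each step the chosen form meets each component of the previous locus properly, $Z$ is pure of dimension $n-c = m$ and $X$ is a union of irreducible components of $Z$. Hence $\deg X \leq \deg Z$.

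Finally, by the refined Bézout inequality (see e.g.\ Fulton, \emph{Intersection Theory}, or Heintz),
\[
\deg Z = \deg V(h_1, \ldots, h_c) \leq \prod_{i=1}^c \deg h_i = d^c = d^{n-m}.
\]
Combining, $\deg X \leq d^{n-m}$, completing the induction.
\end{proof}
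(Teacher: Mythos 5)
Your overall strategy is the same as the paper's: build a chain of intersections $V_0=\PP^n\supset V_1\supset\cdots\supset V_c$ with $V_i=V(h_1,\ldots,h_i)$, each $h_i$ a generic linear combination of the defining forms, so that $V_c$ is pure of dimension $m$, contains $X$ as a union of components, and has degree $\leq d^c$ by B\'ezout. This is exactly what the paper does. But your formal proof, as written, contains a step that is wrong and cannot be salvaged in the form you stated it.

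You assert that a generic linear combination $h_1=\sum_j\lambda_jg_j$ ``vanishes on no component of $X$'' and that $X\cap V(h_1)$ is pure of dimension $m-1$. Both claims are false: by definition $X=V(g_1,\ldots,g_N)$, so every $g_j$ (and hence every linear combination $h_1$) vanishes identically on $X$. Thus $X\cap V(h_1)=X$, which has dimension $m$, not $m-1$, and it is impossible for $h_1$ to miss a component of $X$. The same error repeats at the next stage with $h_2$ and ``$X\cap V(h_1)$''. The condition you need --- which your proof \emph{plan} gestures at correctly with ``avoids containing any component of the current partial intersection'' --- is that $h_{i+1}$ should vanish on no irreducible component of $V_i=V(h_1,\ldots,h_i)$ (not of $X\cap V_i$, which would be vacuous). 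The reason this is achievable is the key point of the paper's argument and is missing from yours: each component $U$ of $V_i$ has dimension $n-i>m=\dim X$, so $U\not\subset X$ and one can pick a point $p\in U\setminus X$; since not all $g_j$ vanish at $p$, a generic $\sum_j\lambda_jg_j$ is nonzero at $p$, hence does not vanish identically on $U$. Prime avoidance alone does not suffice --- you must use that $X$ is strictly smaller than every component of $V_i$, which is where the hypothesis $\dim X=m$ and the count of $c=n-m$ steps enter. Once the $h_i$ are chosen this way, the conclusion that $V_c$ is pure of dimension $m$ containing $X$, so $X$ is a union of its components and $\deg X\le\deg V_c\le d^c$, is fine and matches the paper.
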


\begin{proof}
Let $f_1, \ldots, f_N$ be non-zero homogeneous polynomials of degree $d$ such that $X = V(f_1, \ldots, f_N)$. We inductively define a sequence of polynomials $g_1, g_2, \ldots, g_k$, with $k=n-m$ and subvarieties $V_i = V(g_1, \ldots, g_i)$ for which
\[
V_{1} \supset \ldots \supset V_k \supset X,
\]
and such that $V_i$ is of dimension $n-i$ and has degree at most $d^i$. We start by putting $g_1 = f_1$ and $V_1 = V(f_1)$. Assume that $V_i, g_1, \ldots, g_i$ have been constructed, for $i < k$. Let $\{U_{ij}\}_{j=1, \ldots, M}$ be the irreducible components of $V_i$, and note that these all have dimension exactly $n-i$. Since $X$ is of smaller dimension than $V_i$, for each $j$ we can find a point $p_{ij}$ in $U_{ij}$ which is not in $X$. Since the $p_{ij}$ are not in $X$, the system of equations
\begin{align*}
\lambda_1 f_1(p_{i1}) + \ldots + \lambda_N f_N(p_{i1}) &\neq 0, \\
\vdots \quad \qquad \qquad \\
\lambda_1 f_1(p_{iM}) + \ldots + \lambda_N f_N(p_{iM}) &\neq 0
\end{align*}
has a solution $(\lambda_1, \ldots, \lambda_N)$. Now put $g_{i+1} = \lambda_1f_1 + \ldots \lambda_N f_N$ and $V_{i+1} = V_i \cap V(g_{i+1})$. 

We conclude by noting that the dimension of $V_k$ is the dimension of $X$, and hence the degree of $X$ is bounded by the degree of $V_k$, which is in turn bounded by $d^{k} = d^{n-m}$.
\end{proof}

The following lemma shows that a surface which is not cylindrical over a curve cannot contain too many parallel lines. 

\begin{lemma}\label{lem:fin.many.lines.in.direction}
Let $X\subset \AA^3$ be an absolutely irreducible surface of degree $d$ which is not cylindrical over a curve. Then for each $v\in \PP^2$, $X$ contains at most $d^2$ lines with direction $v$.
\end{lemma}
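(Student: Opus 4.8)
The statement is essentially a rigidity result: if $X$ contains "too many" lines in a fixed direction $v\in\PP^2$, then projecting along $v$ collapses $X$ onto a curve, contradicting the hypothesis that $X$ is not cylindrical over a curve. So the strategy is to contrapose: assume $X$ contains strictly more than $d^2$ lines all pointing in direction $v$, and deduce that $X$ is cylindrical over a curve.

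**Setting up the projection.** After a linear change of coordinates over $\QQ$ (which preserves "not cylindrical over a curve" and the degree $d$), I may assume $v$ is the direction of the $x_3$-axis. Let $\pi:\AA^3\to\AA^2$ be the projection $(x_1,x_2,x_3)\mapsto(x_1,x_2)$. A line on $X$ with direction $v$ is exactly a fiber $\pi^{-1}(p)$ for some point $p\in\AA^2$, and such a line lies on $X$ precisely when the fiber $\pi^{-1}(p)$ is contained in $X$. Let $f\in\QQ[x_1,x_2,x_3]$ be a defining polynomial for the hypersurface $X$ (here $X$ is a surface in $\AA^3$, hence a hypersurface), and write $f=\sum_{i=0}^{e} c_i(x_1,x_2)\,x_3^{\,i}$ where $e=\deg_{x_3}f\le d$ and the $c_i\in\QQ[x_1,x_2]$. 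The fiber over $p$ lies on $X$ iff $c_i(p)=0$ for all $i$, i.e. iff $p$ lies on the plane curve $Z:=V(c_0,\dots,c_e)\subset\AA^2$. Now I split into two cases according to whether $Z$ is zero-dimensional or contains a curve.

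**The key dichotomy.** If $Z$ is finite, I bound $\#Z$: it is cut out inside $\AA^2$ by the polynomials $c_i$, each of degree $\le d$, so by Bézout (or by Lemma~\ref{lem:bound.degree} applied in $\PP^2$ to a $0$-dimensional complete intersection of two generic combinations of the $c_i$, after checking they have no common component), $\#Z\le d\cdot d=d^2$. Since each point of $Z$ gives exactly one line of direction $v$ on $X$ and conversely, this yields at most $d^2$ such lines — the desired conclusion. If instead $Z$ contains a one-dimensional component $Z'$, then every vertical line over $Z'$ lies on $X$, so $\pi^{-1}(Z')\subset X$; but $\pi^{-1}(Z')$ is a surface (an irreducible-or-not cylinder of dimension $2$) contained in the irreducible surface $X$, hence $X=\pi^{-1}(Z')$ as sets, so $X$ is the cylinder over the plane curve $Z'$, i.e. $X$ is cylindrical over a curve via the linear map $\pi$. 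This contradicts the hypothesis, ruling out this case entirely and completing the argument.

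**Main obstacle.** The routine part is the Bézout bound $\#Z\le d^2$; the one genuinely delicate point is handling the case where the $c_i$ share a common factor, i.e. making sure that "$Z$ infinite" really does force a curve-component on which one can base the cylinder, and that $\pi^{-1}(Z')\subset X$ together with irreducibility of $X$ forces equality (dimension count: $\dim\pi^{-1}(Z')=2=\dim X$ and $X$ irreducible). One must also be slightly careful that $X$ absolutely irreducible of degree $d$ in $\AA^3$ is indeed a hypersurface $V(f)$ with $\deg f=d$, so that the coefficient polynomials $c_i$ genuinely have degree $\le d$; this is standard. I expect the whole proof to be short once the projection is set up correctly, with essentially all the content in recognizing that the fiberwise-containment locus $Z$ is a plane curve cut out by degree-$\le d$ equations.
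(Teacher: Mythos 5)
Your proof is correct and takes essentially the same approach as the paper's: both reduce the problem to bounding the degree of the locus where all the coefficient polynomials of the $v$-direction expansion vanish, use the not-cylindrical hypothesis to rule out an excess-dimensional component, and invoke a B\'ezout-type bound (the paper's Lemma~\ref{lem:bound.degree}) to get $d^2$. The only cosmetic difference is that you project to $\AA^2$ and bound the finite set $Z$ of base points there, whereas the paper stays in $\AA^3$ and bounds the degree of the pure one-dimensional union of lines directly; these are equivalent since the latter is just the cylinder over $Z$.
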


\begin{proof}
The fact that $X$ is not cylindrical over a curve shows that the number of lines on $X$ with direction $v$ is finite. Indeed, if this were infinite then the union of all lines on $X$ whose direction is $v$ would be an algebraic subvariety of $X$ of dimension $2$, hence equal to $X$ by irreducibility. But then projecting $\AA^3\to \AA^2$ along the direction of $v$ maps $X$ to a curve, contradicting the fact that $X$ is not cylindrical over a curve.

Let $f\in \ZZ[x,y,z]$ be an irreducible polynomial for which $V(f) = X$ and for $a\in \AA^3, t\in \QQ$ consider the expansion
\[
f(a+tv) = g_d(a,v)t^d + g_{d-1}(a,v)t^{d-1} + \ldots + g_0(a,v)
\]
into powers of $t$. We consider the $g_i(a,v)$ as polynomials in the variable $a$, and look at the variety in $\AA^3$ cut out by these equations. This variety is exactly the union of all lines on $X$ with direction $v$, hence of dimension $1$ (or empty). Therefore Lemma~\ref{lem:bound.degree} bounds its degree by $d^2$.
\end{proof}

We are now ready to prove Lemma~\ref{lem:lines.on.surface}.

\begin{proof}[Proof of Lemma~\ref{lem:lines.on.surface}]
Let $f\in \ZZ[x,y,z]$ be primitive irreducible of degree $d$ such that $X = V(f)$. By Proposition~\ref{prop:abs.irre.poly} we may assume that $f$ is absolutely irreducible. Indeed, if $f$ is not absolutely irreducible then Proposition~\ref{prop:abs.irre.poly} implies that there are at most $d^2$ lines in $I$ which contain at least $2$ integral points. Denote by $f_d$ the top-degree part of $f$, and let $V\subset \PP^2$ be the curve defined by $f_d=0$. The lines $\ell$ in $I$ which contain at most one integral point contribute at most $\# I$ to $N_{\aff}(Y, B)$, so it is enough to focus on the lines $\ell$ on $X$ containing at least two integral points. If $\ell$ is such a line, then there is an $a\in \ZZ^3$ and a primitive $v\in \ZZ^3$ such that $\ell(\ZZ) = \{a+tv\mid t\in \ZZ\}$. Then $v$ is a zero of $f_d$, and $\ell$ contains $\ll \frac{B}{H(v)}$ points of height at most $B$. We argue that for each irreducible component $g$ of $f_d$, the lines on $X$ whose direction $v$ lies on $V(g)$ contribute at most $\ll_{d,\varepsilon} B^{1+\varepsilon} + \#I$ to $N_{\aff}(Y,B)$. So we fix such a $g$, put $C = V(g)$, and define $Y_g$ as the union of all lines in $I$ containing at least two integral points and whose direction $v$ satisfies $g(v)=0$. We will distinguish between the case where $\deg g \geq 2$ and $\deg g = 1$.


So assume first that $\deg g \geq 2$ and let $n_i$ be the number of rational points of $C\subset \PP^2$ of height exactly $i$. By~\cite[Thm.\,2]{CCDN-dgc} we have that
\begin{equation}\label{eq:bound.sum.n_i}
\sum_{i=1}^k n_i \ll d^4 k^{\frac{2}{\deg g}}\ll d^4 k.
\end{equation}
For each $v\in C(\QQ)$, Lemma~\ref{lem:fin.many.lines.in.direction} implies that there are at most $d^2$ lines on $X$ with direction $v$. If $\ell$ is a line on $X$ with direction $v\in \PP^2$ which contains at least $2$ distinct integral points of height at most $B$, then $H(v)\leq 2B$. Therefore
\[
N_{\mathrm{aff}}(Y_g, B)\ll \#I + d^2\sum_{\substack{v\in C(\QQ) \\ H(v)\leq 2B}} \frac{2B}{H(v)} = \#I + d^2\sum_{i=1}^{B} n_i\left( \frac{2B}{i}\right).
\]
We now use partial summation and equation~\eqref{eq:bound.sum.n_i} to bound this sum as follows
\begin{align*}
\sum_{i=1}^{2B} n_i\left( \frac{2B}{i}\right) &\ll \sum_{k=1}^{2B} \left(\sum_{i=1}^k n_i\right)\frac{2B}{k^2} + \left(\sum_{i=1}^{2B} n_i\right) \\
&\ll d^4\sum_{k=1}^{2B} \frac{B}{k} + d^4B \ll d^4 B \log B.
\end{align*}
This concludes the proof when $\deg g \geq 2$.

Assume now that $\deg g = 1$, and write $g = ax+by+cz$ with $a,b,c\in \ZZ$ coprime. We first apply a coordinate transformation of $\AA^3$ as follows. Let $H$ be the plane defined by $g = 0$ in $\AA^3$ and take a shortest basis $v_2, v_3$ of the lattice $H\cap \ZZ^3$. Then Minkowski's second theorem and~\cite[Prop.\,2.12]{lang-dioph} give the bounds
\[
||v_2|| ||v_3||\asymp \det H \ll \max\{|a|, |b|, |c|\} \ll \exp(2d) ||f||.
\] 
Put $v_1 = (a,b,c)\in \ZZ^3$. Then any integral point of $\AA^3$ of height at most $B$ may be written as $\lambda_1v_1 + \lambda_2 v_2 + \lambda_3 v_3$ where $\lambda_2, \lambda_3, \det(H)^2\lambda_1\in \ZZ$ and
\[
|\lambda_2|, |\lambda_3|, \det(H)^2 |\lambda_1| \leq B.
\]
We use $v_1 / \det(H)^2, v_2, v_3$ as our new coordinate system. This gives a new polynomial $f'\in \ZZ[x,y,z]$ and a linear factor $g'$ of $f_d'$ with the following properties:
\begin{enumerate}
\item $||f'||\ll_{d} ||f||^{O_{d}(1)}$, and
\item $g' = x$, 
\item every integral point $(x,y,z)$ on $X$ of height at most $B$ corresponds to a point $(x',y',z')$ on $V(f')$ with $|y'|, |z'|\leq B$ and $|x'|\leq \exp(4d)||f||^2 B$.
\end{enumerate}

We continue with this $f'$, and put $X' = V(f')$. Note that since $X$ is not cylindrical over a curve, neither is $X'$. Now by construction of $X'$ and the fact that $g' = x$, we have that the union of all lines on $X'$ whose direction is of the form $(0,v_2,v_3)$ corresponds exactly to $Y_g\subset X$. So it suffices to count integral points on such lines. We consider $f'$ as a polynomial in $\QQ(x)[y,z]$ and denote by $F\in \ZZ[x,y,z]$ the top degree part of this polynomial. In other words, we give $y$ and $z$ weight $1$ and $x$ weight $0$, and let $F$ be the top-degree part of $f'$ as a weighted polynomial with these new weights. We note that $\deg F\leq d$, and that $\deg_{y,z} F\geq 1$. Also, since $X'$ is not cylindrical over a curve, we have that $\deg_x F\geq 1$. If $\ell$ is a line on $X'$, with base point $a = (a_1, a_2, a_3)$ and direction $v = (0, v_2, v_3)$, then the line with base point $a' = (a_1, 0, 0)$ and direction $v$ is contained in $F=0$, and furthermore $F=0$ is the union of all such lines contained in it. So it is enough to bound the integral points of height at most $B$ contained in lines on $F=0$. For this, let $G$ be an irreducible factor of $F$, and note that we still have that $\deg_{y,z} G\geq 1$ and $\deg_x G\geq 1$. If $(a_1, 0, 0)$ is a base point for a line with direction $v = (0, v_2, v_3)$ contained in $G=0$, then $G(a_1, v_3, -v_2) = 0$. Since we are only considering lines on $X'$ which contain at least $2$ integral points, the number of integral points of height at most $B$ on such a line is bounded by $\ll \frac{B}{H(v_2:v_3)}$. Hence we obtain that the total count for $G=0$ is
\[
\sum_{\substack{|a_1|\leq \exp(4d) ||f||^2B\\ G(a_1, v_3, -v_2) = 0 \\ a_1\in \ZZ, (v_2:v_3)\in \PP^2(\QQ)}}\frac{B}{H(v_2:v_3)}.
\]
The polynomial $G(x,y,z)$ is homogeneous in $y,z$, and we denote by $G'(x,t)$ the dehomogenization where we put $z=1$. Applying Lemma~\ref{lem:count.inverse.height} to $G'$ then shows that
\[
N_{\aff}(Y_g, B)\ll_{d, \varepsilon} ||f||^{\varepsilon} ||G||^{\varepsilon} B^{1+\varepsilon} + \#I.
\]
By~\cite[Prop.\,2.12]{lang-dioph} we may assume that $||G||\ll_{d} ||F||\leq ||f'||\ll_d ||f||^{O_d(1)}$.

Now, by using Proposition~\ref{prop:replace.height} we may assume that $||f||\ll B^{O_d(1)}$, from which we conclude that
\[
N_{\aff}(Y_g,B)\ll_{d, \varepsilon} B^{1+\varepsilon} + \#I. \qedhere
\]
\end{proof}

\section{Affine dimension growth}\label{sec:dim.growth}

\subsection{Affine surfaces}

To prove Theorem~\ref{thm:main.thm}, we will induct on the dimension of our variety $X\subset \AA^n$. The base case is for surfaces $X\subset \AA^3$, and so the main result of this section is as follows.

\begin{proposition}\label{prop:counting.surface}
Let $X\subset \AA^3$ be an irreducible surface of degree $d\geq 3$ which is not cylindrical over a curve. Then
\begin{align*}
N_{\aff}(X,B)\ll_{d, \varepsilon} B^{1+\varepsilon}, &\quad \text{ if } d\neq 3, \\
N_{\aff}(X,B)\ll_{\varepsilon} B^{\frac{2}{\sqrt{3}}+\varepsilon}, &\quad \text{ if } d = 3.
\end{align*}
\end{proposition}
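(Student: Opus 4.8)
The plan is to reduce to an absolutely irreducible $X$, dispose quickly of the cases where $X$ has only finitely many lines or where $X_\infty$ carries no line, and in the remaining (main) case split the integral points of height at most $B$ into those lying on a line of $X$ and those not, treating the former with Lemma~\ref{lem:lines.on.surface} and the latter with the $p$-adic determinant method. First I would invoke Proposition~\ref{prop:abs.irre.poly}: if $X$ is not absolutely irreducible, all its rational points lie on a curve of degree at most $d^2$, which by Proposition~\ref{prop:schwarz.zippel} carries only $\ll_d B$ integral points of height at most $B$, so this case is done (with room to spare). Hence assume $X$ absolutely irreducible, let $f\in\ZZ[x,y,z]$ be a primitive irreducible defining polynomial, $f_d$ its leading form and $X_\infty=V(f_d)\subset\PP^2$. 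If $X$ is not ruled it contains only finitely many lines, and in this case Proposition~\ref{prop:counting.surface} is exactly what~\cite{Salberger-dgc} gives --- for $d\ge 4$ via the global determinant method, and for $d=3$ via Salberger's degree-$3$ argument, which is where the exponent $2/\sqrt 3$ enters. If $X$ is ruled but $X_\infty$ has no linear component over $\overline{\QQ}$, then every $\overline{\QQ}$-component of $X_\infty$ has degree $\ge 2$, so the adaptation of Browning--Heath-Brown--Salberger~\cite{Brow-Heath-Salb} recorded in~\cite{Salb.upcoming} applies and yields $N_\aff(X,B)\ll_{d,\varepsilon}B^{1+\varepsilon}$. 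Thus we are reduced to the main case: $X$ a ruled surface, not cylindrical over a curve, whose $X_\infty$ contains a line.

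In this case I would proceed as follows. Let $I$ be the set of lines $\ell\subset X$ (defined over $\overline{\QQ}$) containing at least two integral points in $[-B,B]^3$. For such an $\ell$, the primitive integer direction vector $v$ is a difference of two such points, so $H(v)\le 2B$, and $v$ is a root of the leading form, i.e.\ $v\in X_\infty(\QQ)$; a standard point count on the plane curve $X_\infty$ of degree $d$ (for instance Proposition~\ref{prop:schwarz.zippel} on a dehomogenization together with the $\le d$ points at infinity) shows there are $\ll_d B$ such directions, and by Lemma~\ref{lem:fin.many.lines.in.direction} each direction supports at most $d^2$ lines of $X$. Hence $\#I\ll_d B$, and Lemma~\ref{lem:lines.on.surface} bounds the number of integral points of height at most $B$ on $\bigcup_{\ell\in I}\ell$ by $\ll_{d,\varepsilon}B^{1+\varepsilon}+\#I\ll_{d,\varepsilon}B^{1+\varepsilon}$. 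It remains to bound the integral points of height at most $B$ lying on no line of $I$. Apart from the points lying on two or more distinct lines of $X$ --- a set of size $\ll_d 1$ when $d\ge 3$, unless $X$ is a cone, in which case the vertex is a single point treated directly --- such a point lies on no line of $X$ carrying a second integral point of height at most $B$, so its contribution is the one governed by the $p$-adic determinant method; by Salberger~\cite{Salberger-dgc} and Browning--Heath-Brown--Salberger~\cite{Brow-Heath-Salb} (using that $X$ is not cylindrical over a curve, which feeds into Lemma~\ref{lem:fin.many.lines.in.direction} and rules out the degenerate configurations) these points number $\ll_{d,\varepsilon}B^{1+\varepsilon}$ when $d\ge 4$ and $\ll_\varepsilon B^{2/\sqrt 3+\varepsilon}$ when $d=3$. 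Adding the two contributions gives the proposition.

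The main obstacle is precisely this ruled case with a line in $X_\infty$, and within it the key point is making the passage through Lemma~\ref{lem:lines.on.surface} legitimate: one must know that the family of ``rich'' lines is genuinely small, i.e.\ $\#I\ll_d B$, for which the control of line directions through $X_\infty=V(f_d)$ and Lemma~\ref{lem:fin.many.lines.in.direction} --- hence the hypothesis that $X$ is not cylindrical over a curve --- is indispensable, and one must still absorb the points lying on no rich line without spoiling the exponent. For $d\ge 4$ this last step still yields $B^{1+\varepsilon}$, whereas for $d=3$ it is exactly where the weaker exponent $2/\sqrt 3$ is forced, by the strength of the degree-$3$ determinant method.
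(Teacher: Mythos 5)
Your overall plan --- reduce to absolutely irreducible, peel off the lines via Lemma~\ref{lem:lines.on.surface}, and handle the rest by the $p$-adic determinant method --- is in the right spirit, but the way you set it up has two genuine gaps, and the paper's proof avoids both by organising the argument in the opposite order.

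The first gap is the bound $\#I\ll_d B$. You argue that the direction $v$ of a rich line is a rational point of $X_\infty$ of height $\le 2B$, and then invoke Schwartz--Zippel on a dehomogenization of $X_\infty$ to get $\ll_d B$ directions. But a rational point $(v_1:v_2:v_3)$ of $X_\infty$ of height $\le 2B$ is represented by a \emph{primitive} integer triple in $[-2B,2B]^3$, which does not become an integer point of bounded height in any fixed dehomogenization; the correct Schwartz--Zippel count is on the affine cone $V(f_d)\subset\AA^3$, a surface, which gives $\ll_d B^2$. Indeed, if $X_\infty$ contains a rational line --- which is exactly the case you single out as the main difficulty --- that line alone carries $\asymp B^2$ rational points of height $\le 2B$, and nothing in the non-cylindricity hypothesis forbids rich lines of $X$ from having directions all along it. Since Lemma~\ref{lem:lines.on.surface} gives $B^{1+\varepsilon}+\#I$, a set $I$ of size $\gg B^2$ renders the lemma useless as invoked. (You could salvage this by observing that when every $\ell\in I$ already carries $\ge 2$ integral points the $+\#I$ term in the lemma's proof is not actually paid, but that is re-proving the lemma rather than applying it.)

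The second gap is the step ``points on no rich line are governed by the $p$-adic determinant method.'' There is no clean statement in the literature of the form ``integral points off all lines are $\ll B^{2/\sqrt d+\varepsilon}$.'' What Salberger's Theorem~7.2 of~\cite{Salberger-dgc} actually gives is a collection of $O_d(B^{1/\sqrt d}\log B)$ geometrically integral curves $C_i$ of degree $O(\sqrt d)$ such that $N_\aff(X_\ns\setminus\bigcup_iC_i,B)\ll_{d,\varepsilon}B^{2/\sqrt d+\varepsilon}$. The degree-$1$ $C_i$ are lines, and they are few (so Lemma~\ref{lem:lines.on.surface} applies with $\#I\ll B^{1/\sqrt d}\log B$), but there are also $C_i$ of degree $\ge 2$, and those you have not addressed; the paper disposes of them with~\cite[Thm.~3]{CCDN-dgc}, giving $\ll_{d,\varepsilon}B^{1/2+1/\sqrt d+\varepsilon}$, and then handles the singular locus by Schwartz--Zippel. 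Running the argument this way --- Salberger first, then split the produced curves by degree --- also makes your initial three-way case split (non-ruled; ruled with no line at infinity; ruled with a line at infinity) unnecessary, and sidesteps the side claim that only $\ll_d 1$ points lie on two or more lines, which is not true in general (the singular locus can be a whole curve).
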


\begin{proof}

Let $f\in \ZZ[x,y,z]$ be an irreducible polynomial of degree $d\geq 3$ defining the surface $X\subset \AA^3$. By~\cite[Thm.\,7.2]{Salberger-dgc} there exists a collection of $O_d( B^{1/\sqrt{d}}\log (B))$ geometrically integral curves $C_i$ of degree at most $O(\sqrt{d})$ such that 
\[
N_{\aff}(X_{\ns}\setminus \bigcup_i C_i, B)\ll_{d, \varepsilon} B^{2/\sqrt{d} +\varepsilon},
\]
where $X_{\ns}$ denotes the non-singular locus of $X$. By Lemma~\ref{lem:lines.on.surface} the total contribution to $N_{\aff}(X, B)$ coming from those curves $C_i$ of degree $1$ is at most $\ll_{d,\varepsilon} B^{1+\varepsilon}$. On the other hand, by~\cite[Thm.\,3]{CCDN-dgc} the curves $C_i$ of degree $>1$ contribute at most $\ll_{d, \varepsilon} B^{1/2 + 1/\sqrt{d}+\varepsilon}$ to $N_{\aff}(X, B)$. Finally, $X\setminus X_\ns$ is a possibly reducible curve of degree at most $d^2$. Distinguishing in the same way into irreducible components of degree $1$ and higher degree shows that $X\setminus X_\ns$ has at most $\ll_{d} B$ integral points of height at most $B$. Putting all these estimates together we conclude that
\begin{align*}
N_{\aff}(X, B)  \ll_{d,\varepsilon} B^{1+\varepsilon}, \quad &\text{ if } d\geq 4, \\
N_{\aff}(X, B) \ll_{\varepsilon} B^{2/\sqrt{3}+\varepsilon}, \quad &\text{ if } d=3. 
\end{align*}
This proves the result.
\end{proof}

\begin{remark}
If one can prove Lemma~\ref{lem:lines.on.surface} with polynomial dependence on $d$ and without factors $B^\varepsilon$, then one can also obtain such a result for Proposition~\ref{prop:counting.surface}. In that case, one should follow the proof from~\cite[Prop.\,4.3.4]{CCDN-dgc}.
\end{remark}

\subsection{Affine varieties}

The goal of this section is to prove our main result, Theorem~\ref{thm:main.thm}. The idea is to first prove the result for hypersurfaces by using induction and slicing with hyperplanes, and then use a projection for the general case. Note that our induction is slightly more complicated than in earlier works since we need to preserve the fact that our variety is not cylindrical over a curve. Preserving this condition is taken care of by~\cite{CDHNV}, or see~\cite[Lem.\,9]{Salb.upcoming}. However, in~\cite{CDHNV} one needs to preserve some more conditions on e.g.\ the top degree part of the defining polynomial, which is no longer required in the current work.

\begin{proof}[Proof of Theorem~\ref{thm:main.thm} for hypersurfaces]

Let $X\subset \AA^n$ be an irreducible hypersurface of degree $d\geq 3$ defined by a primitive irreducible polynomial $f\in \ZZ[x_1, \ldots, x_n]$, where $n\geq 3$. As usual, Proposition~\ref{prop:abs.irre.poly} shows that we can assume that $X$ and $f$ are absolutely irreducible. If $n=3$, then the result follows from Proposition~\ref{prop:counting.surface}, so we assume that $n\geq 4$ and that the result holds for hypersurfaces of dimension at most $n-1$. By using Noether forms exactly as in~\cite[Lem.\,4.3.7]{CCDN-dgc}, together with~\cite[Lem.\,9]{Salb.upcoming} there exists a linear form $\ell\in \ZZ[x_1, \ldots, x_n]$ such that  
\begin{enumerate}
\item there are at most $O_{d,n}(1)$ values $b\in \QQ$ such that the intersection $X\cap \{\ell=b\}$ is either not absolutely irreducible, or it is cylindrical over a curve, and
\item the height of $\ell$ is bounded by $O_{d,n}(1)$. 
\end{enumerate}
Alternatively, one may use~\cite[Prop.\,4.14]{CDHNV} to obtain an effective such linear form $\ell$, namely of height at most $O_{n}(d^3)$ and with at most $O_{n}(d^3)$ bad values of $b$. In any case, if $b\in \QQ$ is a value for which $X\cap \{\ell=b\}$ is absolutely irreducible and not cylindrical over a curve then we use induction to count integral points on $X\cap \{\ell=b\}$, while for the other values of $b$ we simply use the Schwartz--Zippel bound from Proposition~\ref{prop:schwarz.zippel} to obtain that if $d\geq 4$ then
\begin{align*}
N_{\aff}(f, B)\leq \sum_{|b|\ll_n d^3 B} N_{\aff}(X\cap \{\ell=b\}, B) \ll_{d,n,\varepsilon} B\cdot B^{n-3+\varepsilon} + B^{n-2} \ll_{d,n,\varepsilon} B^{n-2+\varepsilon}.
\end{align*}
If $d=3$, then the same argument shows that
\[
N_{\aff}(f, B)\ll_{n,\varepsilon} B^{n-3+2/\sqrt{3}+\varepsilon}. \qedhere
\]
\end{proof}

For the general case, we need a little lemma stating that projections preserve being non-cylindrical over a curve.

\begin{lemma}\label{lem:project.linear.over.curve}
Let $X\subset \AA^n$ be a variety of dimension $m$ which is not cylindrical over a curve and let $f: \AA^n\to \AA^{m+1}$ be a $\QQ$-linear map such that $f(X)$ is a hypersurface. Then $f(X)$ is not cylindrical over a curve.
\end{lemma}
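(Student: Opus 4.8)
The plan is to recast ``cylindrical over a curve'' in terms of translation stabilizers. For an irreducible affine variety $V\subseteq\AA^N$ of dimension $e$, set $\Stab(V)=\{w\in\QQ^N\colon V+w=V\}$. Since $V+w\subseteq V$ already forces $V+w=V$, we have $\Stab(V)=\bigcap_{x\in V}(V-x)$, so $\Stab(V)$ is Zariski closed; being a subgroup of $\GG_a^N$ it is a $\QQ$-linear subspace. I would first verify that $V$ is cylindrical over a curve if and only if $\dim\Stab(V)\ge e-1$: if $W\subseteq\Stab(V)$ has dimension $e-1$ then $\AA^N\to\AA^N/W\cong\AA^{N-e+1}$ sends $V$ onto the curve $V/W$; conversely, if $\pi\colon\AA^N\to\AA^{N-e+1}$ is a full-rank linear map with $\pi(V)$ a curve, the generic fibre of $\pi|_V$ has dimension $e-1$, which is the fibre dimension of $\pi$ itself, so it is an equidimensional closed subvariety of an irreducible fibre of $\pi$ and hence equals it; thus $V=\pi^{-1}(\pi(V))$ and $\ker\pi\subseteq\Stab(V)$. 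For a hypersurface $V(F)$ this is precisely the criterion recalled before the lemma: $F$ is a polynomial in two linear forms.

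Arguing by contraposition, suppose $f(X)$ is cylindrical over a curve. Since $f(X)$ is a hypersurface in $\AA^{m+1}$ we have $\dim f(X)=m=\dim X$, so the previous paragraph gives $\dim\Stab(f(X))\ge m-1$, and the goal is to deduce $\dim\Stab(X)\ge m-1$. The clean case is $m=n-1$, i.e.\ $X$ itself a hypersurface: writing $f(X)=V\bigl(g(\ell_1,\ell_2)\bigr)$ with $\ell_1,\ell_2$ linear forms on $\AA^{m+1}$, we get $X\subseteq f^{-1}(f(X))=V\bigl(g(\ell_1\circ f,\ell_2\circ f)\bigr)$; as $X$ is irreducible of codimension $1$ it is an irreducible component of the right-hand hypersurface, so its defining polynomial is an irreducible factor of $g(\ell_1\circ f,\ell_2\circ f)$, which is a polynomial in the two linear forms $\ell_1\circ f,\ell_2\circ f$ on $\AA^n$ — hence $X$ is cylindrical over a curve, a contradiction.

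For general $m<n-1$ — the case actually needed in the proof of Theorem~\ref{thm:main.thm} — I would use that the projection $f$ there is chosen generically, so that $f|_X$ is birational onto $f(X)$ (and $f$ is surjective). Setting $\Lambda=(\ell_1\circ f,\ell_2\circ f)\colon\AA^n\to\AA^2$, the variety $X$ maps dominantly onto the curve $V(g)$, the generic fibre $X\cap\Lambda^{-1}(c)$ has dimension $m-1$ and lies in the affine space $\Lambda^{-1}(c)$, and $f$ maps $\Lambda^{-1}(c)$ onto an $(m-1)$-plane contained in $f(X)$; birationality of $f|_X$ then says $X\cap\Lambda^{-1}(c)$ maps generically injectively and dominantly onto that $(m-1)$-plane. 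I expect the main obstacle to be exactly here: concluding from this that $X\cap\Lambda^{-1}(c)$ is itself an $(m-1)$-plane and that these planes are parallel across $c$, thereby producing a fixed $(m-1)$-dimensional subspace $W$ with $X+W=X$. This step genuinely uses that $f$ is a sufficiently general projection rather than merely that $f(X)$ is a hypersurface, since a subvariety of a cylinder over a curve need not itself be a cylinder; making it precise — for instance by choosing coordinates in which $f$ is a coordinate projection adapted to a birational parametrization of $X$ and tracking the linear factors of the defining equations — is the technical heart of the lemma.
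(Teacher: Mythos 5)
Your stabilizer reformulation ($V$ is cylindrical over a curve iff $\dim\Stab(V)\ge\dim V-1$) is a genuinely different and cleaner lens than what the paper uses, and your argument in the hypersurface case $m=n-1$ is correct. The paper's own proof is much terser: it picks a linear $h\colon\AA^n\to\AA^{n-m-1}$ complementary to $f$ (so $f\times h$ is an isomorphism) and asserts that $\phi:=(\pi\circ f)\times h\colon\AA^n\to\AA^{n-m+1}$ maps $X$ onto a curve. That assertion is clear when $m=n-1$ (then $h$ is trivial and $\phi=\pi\circ f$), i.e.\ exactly your clean case, but for $m<n-1$ it needs the same justification your argument is missing.

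You are right to flag the general case as the real issue, and it is worth being explicit that the lemma \emph{as stated} does not hold without a further hypothesis on $f$. Take
\[
X=\{(t,t^2,s,s^2):t,s\}\subset\AA^4,\qquad f=\mathrm{pr}_{123}\colon\AA^4\to\AA^3 .
\]
Then $X$ is irreducible of dimension $2$ with $\Stab(X)=0$ (no nonzero translation preserves both parabolas), so $X$ is not cylindrical over a curve, yet $f(X)=V(y_1^2-y_2)$ is a parabolic cylinder, which is. The paper's map $\phi=(\pi\circ f)\times h$ with $\pi=\mathrm{pr}_{12}$ and $h=\mathrm{pr}_4$ sends $X$ onto $\{(t,t^2,s^2)\}=V(z_2-z_1^2)$, a surface rather than a curve; in general one only gets $X+\ker\phi\subseteq f^{-1}(f(X))$, and without $X=f^{-1}(f(X))$ (true only in the hypersurface case) this does not yield $\ker\phi\subseteq\Stab(X)$. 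The statement is only ever applied to the projection from~\cite[Prop.\,4.3.1]{CCDN-dgc}, which is additionally finite, birational onto its image, and degree-preserving (equivalently, the center of projection avoids $\overline X\subset\PP^n$); in my example that fails, since the center $[0:0:0:0:1]$ lies on $\overline X$ and the degree drops from $4$ to $2$. So the lemma needs that extra hypothesis recorded, and your ``technical heart'' — using generality of $f$ to promote the $(m-1)$-dimensional fibres of $\Lambda|_X$ to parallel $(m-1)$-planes — is exactly where the missing content lives; the paper's one-line argument does not supply it either.
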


\begin{proof}
Suppose that $f(X)$ is cylindrical over a curve, we show that $X$ is cylindrical over a curve. There exists a $\QQ$-linear map $\pi: \AA^{m+1}\to \AA^2$ such that $\pi(f(X))$ is a curve. Let $h: \AA^n\to \AA^{n-m-1}$ be a complementary $\QQ$-linear map to $f$, so that $f\times g: \AA^n\to \AA^{m+1}\times \AA^{n-m-1}$ is an isomorphism. Then the map $(\pi\circ f)\times h: \AA^n\to \AA^{n-m+1}$ is $\QQ$-linear and maps $X$ to a curve, so that $X$ is cylindrical over a curve.
\end{proof}

We can now prove Theorem~\ref{thm:main.thm} in general.

\begin{proof}[Proof of Theorem~\ref{thm:main.thm}]
Let $X\subset \AA^n$ be an irreducible variety of degree $d\geq 3$ and dimension $m$. By~\cite[Prop.\,4.3.1]{CCDN-dgc} there exists a $\QQ$-linear map $\pi: \AA^n\to \AA^{m+1}$ such that $\pi(X)$ is a hypersurface of degree $d$, $\pi$ is birational onto its image, and such that
\[
N_{\aff}(X, B)\leq dN_{\aff}(\pi(X), O_{d,n}(B)).
\]
By Lemma~\ref{lem:project.linear.over.curve} the image $\pi(X)$ is not cylindrical over a curve, and so Theorem~\ref{thm:main.thm} for hypersurfaces gives that if $d\geq 4$ then
\[
N_{\aff}(X, B)\ll_{d,n,\varepsilon} B^{m-1+\varepsilon}.
\]
If $d = 3$ then we obtain that
\[
N_{\aff}(X,B)\ll_{n, \varepsilon} B^{m-2+2/\sqrt{3}+\varepsilon}. \qedhere
\]
\end{proof}

\section{Global fields}\label{sec:global.fields}

\subsection{Definitions and main result}

In this section we explain how to adapt the results of this paper to global fields, including positive characteristic fields. In positive characteristic, results about counting rational and integral points on curves are due to Sedunova~\cite{Sedunova} and later Cluckers--Forey--Loeser~\cite{CFL} in large characteristic. In higher dimensions, there are results by the author~\cite{Vermeulen:p}, Paredes--Sasyk~\cite{Pared-Sas}, and Cluckers--D\`ebes--Hendel--Nguyen--Vermeulen~\cite{CDHNV}. 

We follow the notation and set-up from~\cite{Vermeulen:p, Pared-Sas}. Let $K$ be a global field, by which we mean a finite separable extension of $\QQ$ or $\FF_q(t)$. If $K$ is an extension of $\FF_q(t)$ then we assume that $\FF_q$ is the full field of constants of $K$. Denote $k = \QQ$ or $\FF_q(t)$ and $d_K = [K:k]$. Any such global field comes with a height function $H: \PP^n(K)\to \RR_{\geq 0}$, which we normalize as in~\cite{Pared-Sas} (or see~\cite[Sec.\,4]{CDHNV}). For a projective variety $X\subset \PP^n_K$ defined over $K$, we obtain the counting function $N(X, B)$ and similarly $N_{\aff}(X,B)$ for affine varieties. We say that an affine variety $X\subset \AA^n$ of dimension $m$ is \emph{cylindrical over a curve} if there exists a $\overline{K}$-linear projection $\pi: \AA^n\to \AA^{n-m+1}$ such that $\pi(X)$ is a curve. 


The analogue of Theorem~\ref{thm:main.thm} then becomes the following. 

\begin{theorem}\label{thm:main.thm.global}
Let $K$ be a global field and let $X\subset \AA^n$ be an irreducible affine variety of dimension $m$ and degree $d$ defined over $K$ which is not cylindrical over a curve. Then for any positive integer $B$
\begin{align*}
N_{\aff}(X,B)\ll_{K,d,n, \varepsilon} B^{m-1+\varepsilon}, &\quad \text{ if } d\geq 4, \\
N_{\aff}(X,B)\ll_{K,n, \varepsilon} B^{m-2+\frac{2}{\sqrt{3}}+\varepsilon}, &\quad \text{ if } d = 3.
\end{align*}
\end{theorem}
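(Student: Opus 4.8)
The plan is to mirror the characteristic-zero argument, replacing $\QQ$ by a global field $K$ and $\ZZ$-points (or points in a box $[-B,B]^n$) by the appropriate notion of bounded-height points, while tracking the dependence of every constant on $K$ in addition to $d$, $n$, $\varepsilon$. Concretely, all the auxiliary inputs used in the proof of Theorem~\ref{thm:main.thm} have global-field analogues already in the literature: Proposition~\ref{prop:abs.irre.poly} on passing to the absolutely irreducible case (see~\cite{Walsh}, and its adaptation in~\cite{Pared-Sas, CDHNV}), the Schwartz--Zippel bound (Proposition~\ref{prop:schwarz.zippel}), the Heath-Brown height-replacement trick (Proposition~\ref{prop:replace.height}, valid over global fields by~\cite{Pared-Sas}), Salberger's global determinant method and the resulting projective dimension growth results over global fields by Paredes--Sasyk~\cite{Pared-Sas}, and the slicing and projection lemmas used in Section~\ref{sec:dim.growth}, for which the non-cylindrical-over-a-curve condition is preserved exactly as in Lemma~\ref{lem:project.linear.over.curve} and via~\cite{CDHNV}. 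So the first step is to restate Theorems~\ref{thm:curves.in.P1P1} and~\ref{thm:curves.in.A1P1}, Lemma~\ref{lem:lines.on.surface}, and Proposition~\ref{prop:counting.surface} over $K$, checking that each proof goes through with constants now depending on $K$.

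The key point to verify is the curve-counting engine of Section~\ref{sec:curves.in.P1P1}. Here one replaces the prime $p$ of $\ZZ$ by a place $v$ of $K$ (a prime of the ring of integers, or a closed point of the curve in the function field case) with residue field $\FF_{q_v}$, and the Bombieri--Vaaler / adelic Minkowski estimate by its number-field and function-field analogues as used in~\cite{Pared-Sas}. The Weil bound over $\FF_{q_v}$ is unchanged, so Lemma~\ref{lem:determinant.bound.one.prime} survives with $p$ replaced by $q_v$; the Mertens-type sums $\sum \log q_v / q_v$ and $\sum \log q_v/q_v^{3/2}$ over places of norm $\le s$ still have the right asymptotics (with $K$-dependent error terms), which is what Proposition~\ref{prop:det.estimate} needs. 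The coordinate-transform Lemma~\ref{lem:coordinate.transform} and the Ellenberg--Venkatesh-style argument in the proof of Theorem~\ref{thm:curves.in.A1P1} carry over since they are elementary; one just needs a Bertrand-type statement supplying a prime (or prime element) of $\OO_K$ of bounded norm in a dyadic range, which is standard with a $K$-dependent constant, and in the function-field case one uses an irreducible polynomial of bounded degree instead. Similarly, Lemmas~\ref{lem:when.root} and~\ref{lem:rational.function.integer} and the auxiliary Lemma~\ref{lem:count.inverse.height} only use the rational root theorem (replace by the fact that a root in $\OO_K$ of $F(X,t)$ is controlled by the leading coefficient), resultants, and the divisor bound $\tau(n) \ll_\varepsilon n^\varepsilon$ in $\OO_K$; all of these are available, the divisor bound with a constant depending on $K$.

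With the curve and surface results in hand, the induction of Section~\ref{sec:dim.growth} is formal. For the hypersurface case one slices by hyperplanes $\{\ell = b\}$ with $\ell$ a linear form over $K$ chosen (via Noether-type forms over $K$, as in~\cite{Pared-Sas, CDHNV}, and~\cite[Lem.\,9]{Salb.upcoming}) so that all but $O_{K,d,n}(1)$ values of $b$ give an absolutely irreducible slice which is not cylindrical over a curve; the bad slices are handled by Proposition~\ref{prop:schwarz.zippel}, and one sums over the $O_{K,d,n}(B)$ relevant values of $b$ exactly as before, obtaining $N_{\aff}(X,B) \ll_{K,d,n,\varepsilon} B^{m-1+\varepsilon}$ for $d \ge 4$ and $B^{m-2+2/\sqrt 3+\varepsilon}$ for $d=3$. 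For a general variety $X$ of dimension $m$, one applies the global-field analogue of the generic projection~\cite[Prop.\,4.3.1]{CCDN-dgc} (or the version in~\cite{Pared-Sas}) to reduce to a hypersurface of the same degree and dimension $m+1$ in $\AA^{m+1}$ which is birational onto its image, noting via Lemma~\ref{lem:project.linear.over.curve} (whose proof is purely linear-algebraic and works over $\overline K$) that the image is again not cylindrical over a curve, and finishes.

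The main obstacle I anticipate is bookkeeping rather than a genuinely new idea: one must be careful that every implicit constant that was previously absolute or depended only on $d,n$ is now allowed to depend on $K$ (in particular $d_K = [K:k]$ and, in the function-field case, $q$), and that the various number-theoretic inputs over $K$ — the Mertens and prime-number-theorem estimates, Bertrand's postulate, the divisor bound, the height-replacement and Noether-form lemmas, and the Bombieri--Vaaler adelic estimate — are cited in the forms established in~\cite{Pared-Sas} and~\cite{CDHNV}. The one place that deserves an explicit remark is the positive-characteristic subtlety of separability/inseparability in the slicing step and in the generic projection, which is exactly what~\cite{CDHNV} is set up to handle; modulo invoking that, the proof is a line-by-line transcription of the characteristic-zero argument.
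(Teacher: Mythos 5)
Your proposal is correct and follows essentially the same route as the paper: adapt the determinant-method curve counts of Section~\ref{sec:curves.in.P1P1}, the line-counting of Section~\ref{sec:lines.on.surfaces}, and the surface base case of Proposition~\ref{prop:counting.surface} to $K$ using the global-field toolkit of Paredes--Sasyk and~\cite{CDHNV}, then run the same hyperplane-slicing and generic-projection induction. The only detail the paper flags that you do not is that the global-field analogues of Theorems~\ref{thm:curves.in.P1P1} and~\ref{thm:curves.in.A1P1} come with a worse (but still polynomial in $|d|$) exponent, owing to weaker Noether-form degree bounds in positive characteristic, which is harmless since the final bound has a $B^{\varepsilon}$.
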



\begin{proof}
We sketch how to adapt the proof of Theorem~\ref{thm:main.thm} to this setting. First, one may adapt the results from Section~\ref{sec:curves.in.P1P1} to any global field using similar techniques as in~\cite{Vermeulen:p} and~\cite{Pared-Sas}. In particular, one obtains analogues of Theorems~\ref{thm:curves.in.P1P1} and~\ref{thm:curves.in.A1P1} which are moreover polynomial in $|d|$, but with a worse exponent. The reason for the worse exponent is due to better bounds on degrees of Noether forms in characteristic zero, compare~\cite{RuppertCrelle} with~\cite{KALTOFEN1995}. Next one can reprove the base of the induction to count $\cO_K$-points on affine surfaces in $\AA^3$ in exactly the same fashion. In this case, one should use~\cite[Thm.\,6.9]{Pared-Sas} to replace the use of~\cite[Thm.\,7.2]{Salberger-dgc} in the proof of Proposition~\ref{prop:counting.surface}. For hypersurfaces we now proceed in the same way by cutting with well-chosen hyperplanes to preserve the irreducibility and being non-cylindrical over a curve. This is possible by~\cite[Lem.\,9]{Salb.upcoming}. Finally, if $X$ is any affine variety we first project it to a hypersurface to conclude. 

\end{proof}

\bibliographystyle{amsplain}
\bibliography{anbib}

\end{document}